\numberwithin{equation}{section}
\newcommand{\R}{{\mathbf{R}}}
\newcommand{\E}{{\mathbf{E}}}
\newcommand{\N}{{\mathbf{N}}}
\newcommand{\Q}{{\mathbf{Q}}}
\newcommand{\D}{{\mathcal{D}}}
\newcommand{\F}{{\mathcal{F}}} 
\renewcommand{\P}{{\mathbf{P}}} 
\newcommand{\dom}{{\mathrm{dom}}}
\newcommand{\B}{{\mathcal{B}}}
\newcommand{\Pow}{{\mathcal{P}}}
\newcommand{\A}{{\mathcal{A}}}
\renewcommand{\L}{{\mathcal{L}}}
\newcommand{\NN}{{\mathcal{N}}}
\newcommand{\diff}[1]{\,\mathrm{d}#1}
\newcommand{\inner}[3][]{\langle#2,#3\rangle_{#1}}
\newcommand{\ska}[3][]{( #2,#3 )_{#1}}
\newcommand{\ee}{\mathrm{e}}
\newcommand{\triple}{{\vert\kern-0.25ex\vert\kern-0.25ex\vert}}
\newcommand{\tn}{\xi}
\theoremstyle{plain}
\newtheorem{definition}{Definition}[section]
\newtheorem{theorem}[definition]{Theorem}
\newtheorem{lemma}[definition]{Lemma}
\newtheorem{prop}[definition]{Proposition}
\newtheorem{assumption}[definition]{Assumption}
\theoremstyle{definition}
\newtheorem{remark}[definition]{Remark}
\newtheorem{example}[definition]{Example}
\begin{document}

\title[On a randomized backward Euler method]
{On a randomized backward Euler method for\\ 
nonlinear evolution equations with\\ time-irregular coefficients}

\author[M.~Eisenmann]{Monika Eisenmann}
\address{Monika Eisenmann\\
Technische Universit\"at Berlin\\
Institut f\"ur Mathematik, Secr. MA 5-3\\
Stra\ss e des 17.~Juni 136\\
DE-10623 Berlin\\
Germany}
\email{meisenma@math.tu-berlin.de}

\author[M.~Kov\'acs]{Mih\'aly Kov\'acs}
\address{Mih\'aly Kov\'acs\\
Department of Mathematical Sciences\\
Chalmers University of Technology and University of Gothenburg\\
SE-412 96 Gothenburg\\
Sweden}
\email{mihaly@chalmers.se}

\author[R.~Kruse]{Raphael Kruse}
\address{Raphael Kruse\\
Technische Universit\"at Berlin\\
Institut f\"ur Mathematik, Secr. MA 5-3\\
Stra\ss e des 17.~Juni 136\\
DE-10623 Berlin\\
Germany}
\email{kruse@math.tu-berlin.de}

\author[S.~Larsson]{Stig Larsson}
\address{Stig Larsson\\
Department of Mathematical Sciences\\
Chalmers University of Technology and University of Gothenburg\\
SE-412 96 Gothenburg\\
Sweden}
\email{stig@chalmers.se}

\keywords{Monte Carlo method, stratified sampling, 
evolution equations, ordinary differential equations, backward Euler method,
Galerkin finite element method} 
\subjclass[2010]{65C05, 65L05, 65L20, 65M12, 65M60} 

\begin{abstract}
  In this paper we introduce a randomized version of the
  backward Euler method,
  that is applicable to stiff ordinary differential equations and
  nonlinear evolution equations with time-irregular
  coefficients. In the finite-dimensional  case,
  we consider Carath\'eodory type functions satisfying
  a one-sided Lipschitz condition. 
  After investigating the well-posedness and the stability properties
  of the randomized scheme, we prove the convergence to the exact solution
  with a rate of $0.5$ in the root-mean-square norm assuming only that
  the coefficient function is square
  integrable with respect to the temporal parameter.
  
  These results are then extended to the numerical solution of
  infinite-dimen\-sional  evolution equations under monotonicity and
  Lipschitz conditions. Here we consider a combination of the randomized
  backward Euler scheme with a Galerkin finite element method. 
  We obtain error estimates that correspond to the regularity of the exact 
  solution. 
  The practicability of the randomized scheme is also illustrated through
  several numerical experiments.
\end{abstract}

\maketitle

\section{Introduction}
\label{sec:intro}

The aim of this paper is to introduce a new numerical scheme to 
approximate the solution of an ordinary differential equation (ODE) of
Carath\'{e}odory type
\begin{align} 
  \label{eq1:ODE}
	\begin{split}
		\begin{cases}
			\dot{u}(t) = f(t, u(t)), \quad \text{ for almost all } t\in (0,T],\\
			u(0) 		= u_0, 
		\end{cases}
	\end{split}
\end{align}
for $T \in (0, \infty)$, and of a non-autonomous evolution equation 
\begin{align} \label{eq1:PDE}
	\begin{split}
		\begin{cases}
			\dot{u}(t) + \A(t) u(t) = f(t), \quad \text{for almost all } t \in 
			(0,T],\\
			u(0) = u_0,    
		\end{cases}
	\end{split}
\end{align}
where $\A \colon [0,T] \times V \to V^\ast$ is a strongly monotone and 
Lipschitz continuous operator with respect to the second argument
that is defined on a Gelfand triple $V \hookrightarrow H \cong
H^\ast \hookrightarrow V^\ast$ for real Hilbert spaces $V$ and $H$.

We focus on the particular difficulty that the mappings $f$ and
$\A$ are irregular with respect to the temporal parameter. More precisely, we
do not impose any continuity conditions but only certain integrability
requirements with respect to $t$. For a concise description of the general
settings we refer to Sections~\ref{sec:ODE} and~\ref{sec:nonlinPDE},
respectively. In particular, a precise 
statement of all conditions is given in Assumption~\ref{as:ODEf} for
\eqref{eq1:ODE} and in Assumption~\ref{as:nonlinPDE} for  
\eqref{eq1:PDE}. To develop the idea of our scheme we mostly focus on the ODE
problem \eqref{eq1:ODE} in this introduction. The derivation of the
numerical scheme for the evolution equation \eqref{eq1:PDE} follows 
analogously and will be introduced in detail in Section~\ref{sec:nonlinPDE}. 

When considering a right-hand side $f$ that is only integrable, every
deterministic algorithm can be "fooled" if it only uses information
provided by point evaluations on prescribed (deterministic) points.
One can easily construct suitable fooling functions for general classes of 
deterministic algorithms, for instance, based on adaptive strategies. 
In Sections~\ref{sec:numexpODE} and \ref{sec:PDEnum} we give examples 
of such fooling functions and investigate the numerical behavior. Further,
we refer to the vast literature on the information-based complexity theory 
(IBC), which applies similar techniques to derive lower bounds for the error 
of certain classes of deterministic and randomized numerical algorithms. For 
instance, see \cite{novak1988, traub1988} for a general introduction into 
IBC and \cite{heinrich2008, kacewicz1987, kacewicz2006} for applications 
to the numerical solution of initial value problems.

One way to construct numerical methods for the solution of
initial value problems with time-irregular coefficients consists 
of allowing the algorithm to use additional information of the
right-hand side $f$ as, for example, integrals of the form
\begin{align}
  \label{eq1:linfunc}
  f^n(x) := \frac{1}{t_n - t_{n-1}}  \int_{t_{n-1}}^{t_n} f(s, x)
  \diff{s},\quad \text{for } x \in \R^d. 
\end{align}
This approach is often found in the existence theory of
ODEs and PDEs when a numerical method is
used to construct analytical solutions to the initial value problems
\eqref{eq1:ODE} and \eqref{eq1:PDE} under minimal regularity assumptions.
The complexity of such methods has also been
studied in \cite{kacewicz1987} (and the references therein) for the numerical 
solution of ODEs. It is also the state-of-the-art
method in many recent papers for the
numerical solution of evolution equations of the form \eqref{eq1:PDE}. For
example, we refer to \cite{baiocchi1983, emmrich2009,
houZhu2006,meidnerVexler2017}.  

However, it is rarely discussed how a quantity such as $f^n(x)$
in \eqref{eq1:linfunc} is obtained in practice. Strictly speaking,
since the computation of $f^n(x)$ often requires the application of further
numerical methods such as quadrature rules, algorithms relying on integrals
such as \eqref{eq1:linfunc} are, in general,
not fully discrete solvers yet. More importantly, classical quadrature rules
for the approximation of 
$f^n(x)$ are again based on deterministic point evaluations of $f$ and may
therefore be ``fooled''.

Instead of using linear functionals such as \eqref{eq1:linfunc}
we propose the following \emph{randomized} version of the
backward Euler method. For $N \in \N$, a step size $k = \frac{T}{N}$, and a
temporal grid $0 = t_0  <t_1< \dots <t_N = T$ with $t_n = nk$ for $n \in \{
0,\dots,N \}$, the randomized scheme for the numerical solution
\eqref{eq1:ODE} is then given by 
\begin{align}	\label{eq1:RandBackEuler}
	\begin{split}
		\begin{cases}
			U^n = U^{n-1} + k f( \tn_n, U^n ),\quad \text{ for } 
			n \in \{ 1,\ldots,N \},\\ 
			U^0 = u_0,
		\end{cases}
	\end{split}
\end{align}
where $\tn_n$ is a uniformly distributed random variable with values in the 
interval $[t_{n-1},t_n]$. Note that we evaluate the 
right-hand side at random points between the grid points. 
Since the evaluation points vary every time the algorithm is called,
it is not possible to construct a fooling function as described above.

We will prove in Theorem~\ref{th:convRandEuler} that the numerical 
solution $U^n$ from \eqref{eq1:RandBackEuler} converges with (strong) 
order $\frac{1}{2}$ to the exact solution $u$ of \eqref{eq1:ODE}, even if $f$ 
is only square integrable with respect to time. Due to the results in 
\cite{heinrich2008} this convergence rate is \emph{optimal} in the sense 
that there exists no deterministic or randomized algorithm based on finitely 
many point evaluations of $f$ with a higher convergence rate within the 
class of all initial value problems satisfying Assumption~\ref{as:ODEf}.

The error analysis is based on the observation that the randomized 
scheme \eqref{eq1:RandBackEuler} is a hybrid of an implicit Runge--Kutta 
method and a Monte Carlo quadrature rule. In fact, if the ODE \eqref{eq1:ODE} 
is actually autonomous, that is, $f$ does not depend on $t$, then we recover 
the classical backward Euler method. On the other hand, if $f$ is independent 
of the state variable $u$, then the ODE \eqref{eq1:ODE} reduces to an 
integration problem and the randomized scheme \eqref{eq1:RandBackEuler} 
is the randomized Riemann sum for the approximation of $u_0 + \int_0^{t_n}
f(s) \diff{s}$ given by
\begin{align*}
  U^n = u_0 + k \sum_{j = 1}^n f(\tn_j), \quad \text{for } n \in
  \{1,\ldots,N\}.
\end{align*}
Observe that a randomized Riemann sum is a particular case of stratified
sampling from Monte Carlo integration. 
It was introduced in \cite{haber1966}, \cite{haber1967} together with further, 
higher order, quadrature rules.
Our error analysis of the randomized scheme \eqref{eq1:RandBackEuler}
combines techniques for the analysis of both time-stepping schemes and Monte 
Carlo integration. 
In particular, since we are interested in the discretization of evolution
equations in later sections, we apply techniques for the numerical analysis of
stiff ODEs developed in \cite{hairer2010} and for stochastic ODEs in
\cite{andersson2017}.  

Before we give a more detailed account of the remainder of this paper, let
us emphasize a few practical advantages of the randomized scheme
\eqref{eq1:RandBackEuler}:

\begin{enumerate}
  \item The implementation of the randomized
    scheme \eqref{eq1:RandBackEuler} is as difficult as
    for the classical backward Euler method in terms of the requirements of 
    solving a nonlinear system of equations. On the other hand,
    the scheme \eqref{eq1:RandBackEuler} does not require 
    integrals such as $f^n(x)$ if the right-hand side is
    time-irregular.

  \item The same is true for the computational effort. 
    Compared to the classical backward Euler method, the randomized
    scheme \eqref{eq1:RandBackEuler} only requires in each step 
    the additional simulation of a single scalar-valued random variable.
    In general, the resulting additional computational effort is negligible
    compared to the solution of a potentially high-dimensional nonlinear system
    of equations. More importantly, due to the randomization we avoid
    the potentially costly computation of the integrals $f^n(x)$.

  \item In contrast to
    every deterministic method based on point evaluations of $f$,
    the randomized scheme \eqref{eq1:RandBackEuler} is independent of the
    particular representation of an integrable function. To be more precise,
    let $g_1$ and $g_2$ be two representations of the same equivalence class $g
    \in L^2(0,T)$. Then, it follows that $g_1(\tn_n) = g_2(\tn_n)$ with
    probability one, since $g_1 = g_2$ almost everywhere. 
\end{enumerate}

We remark that the last item is only valid as long as the random variable
$\tn_n$ is indeed uniformly distributed in $[t_{n-1},t_n]$. 
In practice, however, one usually applies a pseudo-random number generator
which only draws values from the set of floating point numbers. Since
this is a null set with respect to the Lebesgue 
measure, the argument given above is no longer valid. Of course, this problem
affects any algorithm that uses the floating point arithmetic. Nevertheless, a
randomized algorithm is often more robust regarding the particular choice of
the representation of an equivalence class in $L^2(0,T)$ and, hence, more 
user-friendly. For instance, the mapping $(0,T) \ni t \mapsto (T -t )^{-
\frac{1}{3}}$ causes problems for the classical backward
Euler method as it will
evaluate the mapping in the singularity at $t = T$. This problem does not occur
for the randomized backward Euler method with probability one.

Let us also mention that randomized algorithms for the numerical solution of
initial value problems have already been studied in the literature. 
In the ODE case, the complexity and optimality of such algorithms is considered
in \cite{daun2011,heinrich2008, kacewicz2006} under various degrees of
smoothness of $f$. The time-irregular case studied in the present paper was 
first investigated in \cite{stengle1990, stengle1995}. See also
\cite{jentzen2009d,kruse2017a} for a more recent exposition of explicit
randomized schemes. 

The present paper extends the earlier results in several directions. In order 
to deal with possibly stiff ODEs we consider a randomized
version of the backward Euler method and prove its well-posedness and stability
under a one-sided Lipschitz condition. In addition, we require only local 
Lipschitz conditions with respect to the state variable in order to obtain 
estimates on the local truncation error, thereby extending results from
\cite{kruse2017a}. We also avoid any (local) boundedness condition on $f$ 
as, for example, in \cite{daun2011, jentzen2009d}. 

The stability properties also qualify the randomized backward Euler method
as a suitable temporal integrator for non-autonomous evolution equations
with time-irregular coefficients. To the best of our knowledge, there is
no work found in the literature that applies a randomized algorithm
to the numerical solution of evolution equations of the form \eqref{eq1:PDE}. 
Instead, the standard approach in the time-irregular case
relies on the availability of suitable integrals of the right-hand
side as in \eqref{eq1:linfunc}. In particular, we mention \cite{emmrich2009,
houZhu2006}. Further results on optimal rates under minimal regularity
assumptions for linear parabolic PDEs can be found, e.g., in 
\cite{baiocchi1983, chrysafinos2002, hackbusch1981}. For semilinear parabolic
problems optimal error estimates are also found in \cite{meidnerVexler2017},
where a discontinuous Galerkin method in time and space is considered. 

This paper is organized as follows. In Section~\ref{sec:prelim}, we shortly 
introduce the notation and recapture some important concepts of 
stochastic analysis that are relevant for this paper. 
In the following Section~\ref{sec:ODE}, we state the assumptions 
imposed on the ODE \eqref{eq1:ODE}. We also discuss existence and 
regularity of the solution. 
In Section~\ref{sec:randEul}, we then prove the 
well-posedness and convergence of the randomized backward Euler method in the
root-mean-square sense. The ODE part of this paper is completed in
Section~\ref{sec:numexpODE} by examining a numerical example.

In Section~\ref{sec:nonlinPDE}, we introduce the setting for the 
irregular non-autonomous evolution equation \eqref{eq1:PDE} 
that we consider in the second part of
this paper. Under some additional regularity assumptions on the exact solution,
we prove the convergence of a fully discrete method that
combines the randomized backward Euler scheme with a Galerkin finite element
method. The additional regularity assumption is then discussed in more detail 
in Section~\ref{sec:PDEreg}. 
In particular, it is shown that the regularity condition is fulfilled 
for rather general classes of linear and semilinear parabolic PDEs.
Finally in Section~\ref{sec:PDEnum}, we demonstrate that this new randomized 
method can be applied to evolution equations. To this end, we present a 
numerical example which is based
on the finite element software package FEniCS \cite{fenics2012}.

\section{Preliminaries}
\label{sec:prelim}

In this section, we explain the necessary tools from probability theory and 
recall some important inequalities that are needed. First, we start 
by fixing the notation used in this paper.

We denote the set of all positive integers by $\N$ and the set of all real 
numbers by $\R$. In $\R^d$, $d\geq 1$, we denote the Euclidean norm by $| 
\cdot |$ which coincides with the absolute value of a real number for $d=1$. 
The standard inner product in $\R^d$ is denoted by $\ska[]{\cdot}{\cdot}$. For 
a ball of radius $r$ with center $x \in \R^d$ we write $B_r(x) \subseteq 
\R^d$. 

In the following, we will consider different spaces of functions with values 
in general Hilbert spaces. To this end, let $(H,\ska[H]{\cdot}{\cdot}, \|\cdot 
\|_H)$ be a real Hilbert space and $T>0$. 
We will denote the space of continuous functions on $[0,T]$ with 
values in $H$ by $C([0,T]; H)$ where the norm is given by
\begin{align*}
	 \|f \|_{C([0,T];H)} = \sup_{t\in [0,T]} \|f(t)\|_H.
\end{align*}
It will also be important to consider functions which are a little more 
regular. For $0< \gamma <1$ we denote the space of H\"older continuous 
functions by $C^{\gamma}([0,T]; H)$ with norm given by
\begin{align*}
	\|f \|_{C^{\gamma}([0,T];H)} 
	= \sup_{t\in [0,T]} \|f(t)\|_H 
		+ \sup_{\substack{s,t\in [0,T] \\ s \neq t }} \frac{\|f(s) - f(t)\|_H}{|s 
		- t|^{\gamma}}.
\end{align*}
For $p \in[1,\infty)$, we introduce the Bochner--Lebesgue space 
\begin{align*}
	L^p(0,T;H) = \left\{ u:[0,T] \to H \, : \,u \text{ is strongly measurable  
	and 
	}\|u\|_{L^p(0,T;H)} < \infty \right\}
\end{align*} 
where the norm is given by
\begin{align*}
	\|u\|_{L^p(0,T;H)}^p 
	= \int_{0}^{T} \|u(t)\|_{H}^p \diff{t}. 
\end{align*} 
In the case $H = \R$ we write $L^p(0,T)$.

The space of linear bounded operators from $H$ to a Banach space 
$(U, \|\cdot \|_U)$ is denoted by $\L(H,U)$ and in the case of $U=H$ we write 
$\L(H)$. The norm of this space is the usual operator norm 
given by
\begin{align*}
	\|A \|_{\L(H,U)} = \sup_{v \in H, \|v\|_H = 1 }  \|Av \|_{U}.
\end{align*}

Since we are interested in a randomized scheme, we will briefly recall the 
most important probabilistic concepts needed in this paper. 
To this end, we consider a probability space $(\Omega, \F, \P)$ which consists 
of a measurable space $(\Omega, \F)$ together with a finite measure $\P$ such 
that $\P (A) \in [0,1]$ for every $A \in \F$ and $\P(\Omega) = 1$. A mapping 
$X\colon \Omega \to H$ is called a random variable if it is measurable with 
respect to the $\sigma$-algebra $\F$ and the Borel $\sigma$-algebra $\B(H)$ in 
$H$, i.e., for every $B\in \B(H)$ 
\begin{align*}
	X^{-1}(B) = \{ \omega \in \Omega\, : \,X(\omega) \in B \}
\end{align*}
is an element of $\F$. The integral of a random variable $X$ with respect to 
the measure $\P$ is often denoted by
\begin{align*}
	\E[ X ] = \int_{\Omega} X(\omega) \diff{\P(\omega)}.
\end{align*}
The space of $\F$-measurable random variables $X$ such that $\E [ \|X 
\|_H]$ is finite is denoted by $L^1(\Omega, \F, \P; H)$.

For our purposes it is important to consider the space $L^2(\Omega, \F, \P; 
H)$ of square integrable $\F$-measurable random variables. This space is 
often abbreviated by $L^2(\Omega; H)$ if it is clear from the context which 
$\sigma$-algebra $\F$ and measure $\P$ is used.
The space is endowed with the norm 
\begin{align*}
	\|X \|^2_{L^2(\Omega; H)} 
	= \int_{\Omega} \| X(\omega) \|_H^2 \diff{\P(\omega)}
	= \E [\|X \|_H^2], \quad X \in L^2(\Omega;H). 
\end{align*}
Equipped with this norm and inner product 
\begin{align*}
	\ska[L^2(\Omega;H)]{X_1}{X_2} 
	= \int_{\Omega} \ska[H]{X_1(\omega)}{X_2(\omega)} \diff{\P(\omega)}, \quad 
	X_1, 	X_2 \in L^2(\Omega;H), 
\end{align*}
the space $L^2(\Omega;H)$ is a Hilbert space. 

A further important concept is the independence of events $(A_n)_{n \in \N} 
\subset \F$. 
We call the events $(A_n)_{n \in \N}$ independent if for every finite subset 
$I \subset \N$
\begin{align*}
	\P \Big( \bigcap_{n\in I} A_n \Big) = \prod_{n\in I} \P(A_n)
\end{align*}
holds.
This concept can be transferred to families  $(\F_n)_{n\in \N}$ of 
$\sigma$-algebras. Such a family is called independent if for every finite 
subset $I \subset \N$ it follows that every choice of events $(A_n)_{n\in 
I}$ with $A_n \in \F_n$ are independent.
Similarly, a family of $H$-valued random variables $(X_n)_{n\in \N}$ is 
called independent if the generated $\sigma$-algebras
\begin{align*}
	\sigma(X_n) = \{ X_n^{-1}(B) \, : \,B \in \B(H) \}
\end{align*}
are independent.

A family $(\F_n)_{n\in \N}$ of $\sigma$-algebras is called a filtration if for 
every $n\in \N$ the $\sigma$-algebra $\F_n$ is a subset of $\F$  and $\F_n 
\subset \F_m$ holds for $n \leq m$. 
Thus a random variable $X$ can be measurable with respect to $\F_m$ but 
not with respect to $\F_n$ for $n<m$. In some of the arguments in this paper 
it will be important to project an $\F_m$-measurable random variable to a 
smaller $\sigma$-algebra $\F_n$. 
To this end, we introduce the conditional expectation of $X$ with respect to 
$\F_n$: For a random variable $X \in L^1(\Omega, \F_m, \P; H)$ we introduce 
the $\F_n$-measurable random variable $\E[X|\F_n] \colon \Omega \to H$ 
which fulfills
\begin{align*}
	\E [X \mathds{1}_{A}] = \E [ \E[X|\F_n] \mathds{1}_{A} ]
\end{align*}
for every $A \in \F_m$ where $\mathds{1}_{A}$ is the characteristic function 
with respect to $A$.
The random variable $\E[X|\F_n]$ is uniquely determined by these postulations. 
An important property of the conditional expectation of $X \in L^1(\Omega, \F, 
\P; 
H)$ is the 
tower property which states that for two $\sigma$-algebras $\F_n$ and $\F_m$ 
of the filtration $(\F_n)_{n\in \N}$ with $\F_n \subseteq 
\F_m$ we obtain that 
\begin{align*}
	\E [ \E [X|\F_{n} ] | \F_{m}] 
	= \E [ \E [X|\F_{m} ] | \F_{n}] 
	= \E [ X| \F_{n}].
\end{align*}
In particular, if $X$ is already measurable with respect to $\F_n$ then $ \E [ 
X| \F_{n}] = X$ holds. If $\sigma(X)$ is independent of $\F_n$ we obtain that 
$\E [X | \F_{n}] = \E[X]$.

In the course of this paper, we will often use random variables which are 
uniformly distributed on a given temporal interval $(a,b)$. To denote such a 
random variable $\tau \colon \Omega \to \R$ we write $\tau \sim 
\mathcal{U}(a,b)$.

For a deeper insight of the probabilistic background, we refer the reader to 
\cite{klenke2014}.

The following inequalities will be helpful in order to give suitable a priori 
bounds for the solution of a differential equation and the solution of a 
numerical scheme. 
\begin{lemma}[Discrete Gronwall lemma] \label{lem:discreteGronwall}
	Let $(u_n)_{n\in \N}$ and $(b_n)_{n\in \N}$ be two nonnegative sequences
	which satisfy, for given $a\in [0,\infty)$ and $N \in \N$, that
	\begin{align*}
		u_n \leq a + \sum_{j=1}^{n-1} b_j u_j, 
		\quad \text{for all } n \in \{1,\dots, N \}.
	\end{align*}	
	Then, it follows that
	\begin{align*}
		u_n \leq a \exp \Big( \sum_{j=1}^{n-1} b_j \Big), 
		\quad \text{for all } n \in \{1,\dots, N \},
	\end{align*}
  where we use the convention $\sum_{j=1}^{0} b_j = 0$.
\end{lemma}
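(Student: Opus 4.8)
The plan is to prove the discrete Gronwall lemma by induction on $n$, using the monotonicity of the partial sums of the nonnegative sequence $(b_j)$. The base case $n = 1$ is immediate: the hypothesis gives $u_1 \leq a$ (the sum being empty by convention), and the right-hand side of the claimed bound is $a \exp(0) = a$, so equality of the bounds holds. For the inductive step, I would assume that $u_j \leq a \exp\bigl( \sum_{i=1}^{j-1} b_i \bigr)$ for all $j \in \{1, \dots, n-1\}$ and aim to deduce the bound for $u_n$.

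First I would substitute the inductive hypothesis into the defining inequality to get
\begin{align*}
	u_n \leq a + \sum_{j=1}^{n-1} b_j u_j
	\leq a + a \sum_{j=1}^{n-1} b_j \exp \Big( \sum_{i=1}^{j-1} b_i \Big)
	= a \Big( 1 + \sum_{j=1}^{n-1} b_j \exp \Big( \sum_{i=1}^{j-1} b_i \Big) \Big).
\end{align*}
The key elementary estimate is then that for each $j$,
\begin{align*}
	b_j \exp \Big( \sum_{i=1}^{j-1} b_i \Big) \leq \exp \Big( \sum_{i=1}^{j} b_i \Big) - \exp \Big( \sum_{i=1}^{j-1} b_i \Big),
\end{align*}
which follows from the scalar inequality $x \ee^y \leq \ee^{x+y} - \ee^y = \ee^y(\ee^x - 1)$ valid for all $x \geq 0$ and $y \in \R$, here with $x = b_j \geq 0$ and $y = \sum_{i=1}^{j-1} b_i$. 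Summing this telescoping bound over $j \in \{1, \dots, n-1\}$ collapses the sum to $\exp\bigl( \sum_{i=1}^{n-1} b_i \bigr) - \exp(0) = \exp\bigl( \sum_{i=1}^{n-1} b_i \bigr) - 1$, so that
\begin{align*}
	u_n \leq a \Big( 1 + \exp \Big( \sum_{i=1}^{n-1} b_i \Big) - 1 \Big) = a \exp \Big( \sum_{i=1}^{n-1} b_i \Big),
\end{align*}
which is exactly the asserted bound, completing the induction.

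There is no real obstacle here; the only point requiring a moment's care is the scalar inequality $x \ee^y \leq \ee^{x+y} - \ee^y$, which one verifies either by noting $\ee^x - 1 = \int_0^x \ee^s \diff{s} \geq \int_0^x 1 \diff{s} = x$ and multiplying through by $\ee^y > 0$, or equivalently from the convexity bound $\ee^x \geq 1 + x$. An alternative route that avoids this lemma entirely is to first prove the non-telescoped bound $u_n \leq a \prod_{j=1}^{n-1}(1 + b_j)$ by a simpler induction and then use $1 + b_j \leq \ee^{b_j}$, but the telescoping argument above yields the stated form directly and is the cleanest option. The convention $\sum_{j=1}^{0} b_j = 0$ is used precisely to make the base case and the empty-product situations consistent.
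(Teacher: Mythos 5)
Your proof is correct. A small contextual note: the paper does not give its own proof of this lemma; it simply cites Clark (1987), ``Short proof of a discrete Gronwall inequality,'' so there is no in-paper argument to compare against verbatim. Your induction is sound: the base case $u_1 \le a = a\ee^0$ is handled by the empty-sum convention, the inductive step substitutes the hypothesis into the defining inequality, and the scalar bound $b_j \ee^{y} \le \ee^{b_j + y} - \ee^{y}$ (equivalent to $\ee^x - 1 \ge x$) makes the resulting sum telescope to exactly $\ee^{\sum_{i=1}^{n-1} b_i} - 1$, giving the claim. The more commonly seen ``short proof'' instead introduces $z_n := a + \sum_{j=1}^{n-1} b_j u_j$, observes $z_{n+1} - z_n = b_n u_n \le b_n z_n$, hence $z_n \le a\prod_{j<n}(1+b_j)$, and finishes with $1+b \le \ee^{b}$; this is essentially the alternative you sketch at the end. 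The two routes are equally rigorous and of comparable length; the product route separates the discrete iteration from the exponential estimate and is perhaps marginally easier to remember, whereas your telescoping argument lands directly on the exponential form without the intermediate product bound. Either is a complete proof of the statement as given.
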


\begin{lemma}[Gronwall lemma] \label{lem:Gronwall}
	If $u, a \in C([0,T])$ are nonnegative functions which 
	satisfy, for given  $b \in [0,\infty)$, that
	\begin{align*}
		u(t) \leq a(t) + b\int_{0}^{t}  u(s) \diff{s}, \quad \text{for 
		every } t\in [0,T], 
	\end{align*}
  then
	\begin{align*}
		u(t) \leq \ee^{ b t} \max_{s\in[0,t]}a(s) , \quad \text{for every } 
		t\in [0,T].
	\end{align*}
\end{lemma}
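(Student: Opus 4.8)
The plan is to use the classical integrating factor argument. First I would introduce the auxiliary function $v(t) := \int_0^t u(s)\diff{s}$, which belongs to $C^1([0,T])$ because $u$ is continuous, with $v'(t) = u(t)$ and $v(0) = 0$. The hypothesis then reads $v'(t) \leq a(t) + b\,v(t)$ for every $t \in [0,T]$, i.e.\ $v'(t) - b\,v(t) \leq a(t)$.

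Next I would multiply this inequality by the positive integrating factor $\ee^{-bt}$, so that the left-hand side becomes a total derivative: $\big(\ee^{-bt} v(t)\big)' = \ee^{-bt}\big(v'(t) - b\,v(t)\big) \leq \ee^{-bt} a(t)$. Integrating over $[0,t]$ and using $v(0) = 0$ gives $\ee^{-bt} v(t) \leq \int_0^t \ee^{-bs} a(s)\diff{s}$. Since $a$ is nonnegative and continuous, I can bound $a(s) \leq \max_{r\in[0,t]} a(r) =: M(t)$ for $s \in [0,t]$ and compute $\int_0^t \ee^{-bs}\diff{s}$ explicitly. For $b > 0$ this yields $\ee^{-bt} v(t) \leq M(t)\,\frac{1 - \ee^{-bt}}{b}$, hence $b\,v(t) \leq M(t)(\ee^{bt} - 1)$; the case $b = 0$ is immediate, since then the original inequality already states $u(t) \leq a(t) \leq M(t) = \ee^{bt}M(t)$.

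Finally I would feed this back into the hypothesis: $u(t) \leq a(t) + b\,v(t) \leq M(t) + M(t)(\ee^{bt} - 1) = \ee^{bt}M(t) = \ee^{bt}\max_{s\in[0,t]} a(s)$, using $a(t) \leq M(t)$. This is the claimed bound.

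I do not expect a genuine obstacle here; the only points requiring a little care are the justification that $v \in C^1([0,T])$ (so that the fundamental theorem of calculus applies to $t \mapsto \ee^{-bt}v(t)$) and the separate, trivial treatment of $b = 0$ to avoid dividing by zero. An alternative route, closer in spirit to the proof of the discrete Gronwall lemma and avoiding differentiation altogether, is to iterate the integral inequality $n$ times to obtain $u(t) \leq M(t)\sum_{j=0}^{n-1}\frac{(bt)^j}{j!} + b^n \int_0^t \frac{(t-s)^{n-1}}{(n-1)!}\, u(s)\diff{s}$ and let $n \to \infty$; the remainder term vanishes because $u$ is bounded on $[0,T]$, and the sum converges to $\ee^{bt}$, giving the same conclusion.
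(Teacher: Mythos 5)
Your proof is correct. The paper itself does not prove this lemma but simply refers to Hale's textbook for it, so there is no ``paper proof'' to compare against; your integrating-factor argument (with the auxiliary function $v(t)=\int_0^t u(s)\,\mathrm{d}s$, the derivative identity $(\ee^{-bt}v(t))'\le \ee^{-bt}a(t)$, integration, and the final substitution back into the hypothesis) is the standard and entirely sound route, and you handle the $b=0$ case and the $C^1$ regularity of $v$ appropriately. The iterative alternative you sketch at the end is also a well-known correct proof, closer in spirit to the discrete Gronwall lemma; either would serve.
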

For a proof of the discrete Gronwall lemma, we 
refer the reader to \cite{clark1987}. A proof of Lemma~\ref{lem:Gronwall}
can be found in \cite{hale1980}.


\section{A Carath\'eodory type ODE under a one-sided Lipschitz condition}
\label{sec:ODE}

In this section, we introduce an initial value problem involving an ordinary
differential equation with a non-autonomous vector field of Carath\'eodory
type, that satisfies a one-sided Lipschitz condition. We give a precise
statement of all conditions on the coefficient function in 
Assumption~\ref{as:ODEf}, which are sufficient
to ensure the existence of a unique global solution. The same conditions will
also be used for the error analysis of the randomized backward Euler method in
Section~\ref{sec:randEul}. Further, we briefly investigate the temporal
regularity of the solution $u$. 

Let $T \in (0,\infty)$. We are interested in finding an absolutely continuous
mapping $u \colon [0,T] \to \R^d$ that is a solution to the initial value
problem  
\begin{align}
  \label{eq3:ODE}
  \begin{split}
    \begin{cases}      
      \dot{u}(t) = f(t,u(t)), \quad \text{for almost all } t \in (0,T],\\
      u(0) = u_0,
    \end{cases}
  \end{split}
\end{align}
where $u_0 \in \R^d$ denotes the initial value. The
following conditions on the right-hand side $f \colon [0,T] \times \R^d
\to \R^d$ will ensure the existence of a unique global solution.

\begin{assumption}
  \label{as:ODEf}
  The mapping $f \colon [0,T] \times \R^d \to \R^d$ is measurable. Moreover,
  there exists a null set $\NN_f \in \B([0,T])$ such that:
  \begin{itemize}
    \item[(i)] There exists $\nu \in [0, \infty)$ such that 
      \begin{align*}
			  \big( f(t, x) - f(t,y), x -y \big) \le \nu | x - y |^2,
      \end{align*}
      for all $x, y \in \R^d$ and $t \in [0,T] \setminus \NN_f$.
    \item[(ii)] There exists a mapping $g \colon [0,T] \to
      [0,\infty)$ with $g \in L^2(0,T;\R)$ such that
      \begin{align*}
        | f(t, 0) | \le g(t), \quad \text{ for all } t \in [0,T] \setminus 
        \NN_f.
      \end{align*}
    \item[(iii)] For every compact set $K \subset \R^d$ there exists a mapping
      $L_K \colon [0,T] \to [0,\infty)$ with $L_K \in L^2(0,T;\R)$ such that 
      \begin{align*}
        | f(t, x) - f(t,y) | \le L_K(t) |x -y|
      \end{align*}
      for all $x, y \in K$ and $t \in [0,T] \setminus \NN_f$.
  \end{itemize}  
\end{assumption}

First, we note that from Assumption~\ref{as:ODEf} (i) and
(ii) we immediately get
\begin{align}
  \label{eq3:coerc}
  \begin{split}
    \big( f(t,x) , x \big)
    &= \big( f(t,x) - f(t,0) , x - 0 \big) + \big( f(t,0) , x \big)\\
    &\le \nu |x|^2 +  g(t)  |x|
  \end{split}
\end{align}
for all $x \in \R^d$ and $t \in [0,T] \setminus \NN_f$. 

Moreover, it is well-known that Assumption~\ref{as:ODEf} (ii) and (iii) are
sufficient to ensure the existence of a unique local solution $u \colon [0,T_0)
\to \R^d$ to the initial value 
problem \eqref{eq3:ODE} with a local existence time $T_0 \le T$, see for
instance \cite[Chap.~I, Thm~5.3]{hale1980}.
Here, we recall that a mapping $u \colon [0,T_0) \to \R^d$ is a (local)
\emph{solution in the sense of Carath\'eodory} to \eqref{eq3:ODE} if $u$ is 
absolutely continuous and
satisfies  
\begin{align}
  \label{eq3:sol}
  u(t) = u_0 + \int_0^t f(s,u(s)) \diff{s}
\end{align}
for all $t \in [0,T_0)$. Moreover, for almost all $t \in [0,T_0)$ with $|u(t)|
> 0$ we have
\begin{align*}
  |u(t)| \frac{\diff}{\diff{t}} |u(t)|
  = \frac{1}{2} \frac{\diff}{\diff{t}} | u(t)|^2
  = \big( f(t, u(t)), u(t) \big)
  \le \nu |u(t)|^2 +  g(t)  |u(t)|,
\end{align*}
due to \eqref{eq3:coerc}. Hence, by canceling $|u(t)|>0$ from both sides of
the inequality we obtain 
\begin{align*}
  \frac{\diff}{\diff{t}} |u(t)| \le \nu |u(t)| +  g(t)
\end{align*}
for almost all $t \in [0,T_0)$ with $|u(t)|> 0$. After integrating this 
inequality from $0$ to $t$ it follows
\begin{align*}
	|u(t)| \le |u_0| + \int_{0}^{t}g(s) \diff{s} + \int_{0}^{t} \nu |u(s)| 
	\diff{s} ,
\end{align*}
which holds for all $t\in [0,T_0)$. An application of the Gronwall lemma 
(Lemma~\ref{lem:Gronwall}) yields
\begin{align}
  \label{eq3:growth_u}
  |u(t)| \le \ee^{\nu t} \Big( |u_0| + \int_0^t g(s) \diff{s} \Big)
\end{align}
for all $t \in [0,T_0)$. In particular, since $g \in L^2(0,T;\R)$ we deduce
from \eqref{eq3:growth_u} that $u$ is in fact the unique global solution with 
$T_0 = T$.

Finally, let us investigate the regularity of the solution $u$. To this
end, we define 
\begin{align}
  \label{eq3:Ku}
  K_u := \Big\{ x \in \R^d \, : \, | x | \le  \ee^{\nu T} \Big( |u_0| +
  \int_0^T g(s) \diff{s} \Big) \Big\}.
\end{align}
Clearly, $K_u \subset \R^d$ is a compact set, that contains the origin and 
the complete curve $[0,T] \ni t \mapsto u(t) \in \R^d$ due to
\eqref{eq3:growth_u}. Then, an application of Assumption~\ref{as:ODEf} (iii)
with $K = K_u$ yields  
\begin{align}
  \label{eq3:growth_f}
  | f(t,u(t)) | \le L_{K_u}(t) | u(t) | + |f(t, 0) |
  \le L_{K_u}(t) | u(t) | + g(t)
\end{align}
for all $t \in [0,T] \setminus \NN_f$. 

For arbitrary $s, t \in [0,T]$ with $s < t$ it follows from
\eqref{eq3:sol} that 
\begin{align*}
  | u(s) - u(t) | \le \int_s^t | f( z , u(z) ) |  \diff{z}.
\end{align*}
Furthermore, after inserting \eqref{eq3:growth_f}, we have
\begin{align*}
  | u(s) - u(t) | &\le \int_s^t L_{K_u}(z) | u(z) | +  g(z)  \diff{z}\\
  &\le \Big( 1 + \sup_{z \in [0,T]} |u(z)| \Big) \int_s^t \big(L_{K_u}(z) +
  g(z) \big) \diff{z}.
\end{align*}
Then, an application of the Cauchy--Schwarz
inequality yields
\begin{align}
  \label{eq3:hoelder_u}
  | u(s) - u(t) | &\le \big( 1 + \|u\|_{C([0,T];\R^d)} \big) \big\|L_{K_u}
  + g \big\|_{L^2(0,T;\R)} | s - t |^{\frac{1}{2}}
\end{align}
for all $s, t\in [0,T]$. This proves that $u$ is H\"older continuous with
exponent $\frac{1}{2}$.

\section{Error analysis of the randomized backward Euler method}
\label{sec:randEul}

This section is devoted to the error analysis of the randomized backward Euler
method. Our error analysis partly relies on variational methods developed in 
\cite{emmrich2009}, that have recently been adapted to stochastic problems in 
\cite{andersson2017}. 

In this section, we consider the following randomized version of the backward
Euler method: Let $N \in \N$ denote the number of temporal steps and set $k =
\frac{T}{N}$ as the temporal step size. For given $N$ and $k$ we obtain
an equidistant partition of the interval $[0,T]$ given by $t_n := k n$, $n\in 
\{0,\dots, N \}$.
Further, let $\tau = (\tau_{n})_{n \in \N}$ be
a family of independent and $\mathcal{U}(0,1)$-distributed random
variables on a complete probability space $(\Omega,\F,\P)$ and let $\tn =
(\tn_{n})_{n  \in \N}$ be the family of random variables given by $\tn_n = t_n
+  k\tau_n$ for $n\in\N$. Then the numerical approximation $(U^n)_{n \in 
\{0,\dots, N \} }$ of
the solution $u$ is determined by the recursion 
\begin{align}
  \label{eq4:RandBackEuler}
  \begin{split}
	  \begin{cases}
	    U^n = U^{n-1} + k f( \tn_n, U^n ),\quad \text{ for } 
	    n \in \{ 1,\ldots,N \} ,\\ 
	    U^0 = u_0.
	  \end{cases}
  \end{split}
\end{align}
When investigating the solvability of this implicit equation, the mild step 
size restriction $k \nu < 1$ becomes necessary due to the implicit structure 
of the scheme. When considering a dissipative equation which is the case 
when $\nu \leq 0$ the restriction disappears. This case corresponds to the 
setting of the monotone operators in Section~\ref{sec:nonlinPDE}.

Note that \eqref{eq4:RandBackEuler} is an implicit Runge--Kutta
method with one stage and a randomized node. More precisely, in each step 
we apply one member of the following family of implicit Runge--Kutta
methods determined by the Butcher tableau
\begin{align}
  \label{eq4:randRK}
  \begin{split}
    \begin{array}{c|c}
      \theta & 1 \\
      \hline
      & 1 
    \end{array}
  \end{split}
\end{align}
where the value of the parameter $\theta \in [0,1]$ is determined by the random
variable $\tau_j$ in the $j$-th step.

Further, the resulting sequence $(U^n)_{n \in
\{0,\ldots,N\}}$ consists of random variables, since we artificially
inserted randomness into the numerical method. From a probabilistic point of
view, $(U^n)_{n \in \{0,\ldots,N\}}$ is in fact a discrete time stochastic
process, that takes values in $\R^d$ and is adapted to the complete 
filtration $(\F_n)_{n\in \N}$. Here, $\F_n \subset \F$ is 
the smallest complete $\sigma$-algebra such that the subfamily $(\tau_j)_{j \in
\{1,\ldots,n\}}$ is measurable. Note that $\F_n  \subset \F_m$, whenever $n \le
m$. More precisely,
\begin{align}
  \label{eq3:filtration}
  \begin{split}
    \F_0 &:= \sigma \big( \NN \in \F\; : \; \P(\NN) = 0 \big), \\
    \F_n &:= \sigma \big( \sigma( \tau_j\; : \; j \in \{ 1,\ldots , n \}) \cup
    \F_{0}\big), \quad n\in \N.
  \end{split}
\end{align}
In particular, each $\P$-null set (and each subset of a $\P$-null set) is
contained in every $\sigma$-algebra $\F_n$, $n \in \N_0$.

Next, let us introduce the following set $\mathcal{G}^2_N$ of
\emph{square-integrable and adapted grid functions}. For each $N \in \N$ this
set is defined by
\begin{align*}
  \mathcal{G}^2_N := \big\{ &Z \colon \{0,\ldots,N\} \times \Omega \to \R^d \, 
  :
  \, Z^0 = z_0 \in \R^d, \\
  &Z^n, f(\tn_n, Z^n) \in L^2(\Omega,\F_{n}, \P;\R^d) \text{
  for } n \in \{1,\ldots,N\} \big\}.
\end{align*}
Take note that $z_0 \in \R^d$ is an arbitrary deterministic initial 
value and that the condition $Z^n \in L^2(\Omega,\F_{n}, \P;\R^d)$ ensures that
$Z^n$ is square-integrable as well as measurable with respect to the 
$\sigma$-algebra $\F_n$. First, we will show that 
the randomized backward Euler method \eqref{eq4:RandBackEuler}
with a sufficiently large number $N \in \N$ of steps uniquely
determines an element in $\mathcal{G}^2_N$.  


We begin by proving the existence of a solution to the implicit scheme. First,
we state two technical lemmata to prove the existence and measurability of 
a 
solution.

\begin{lemma}
  \label{lem:root}
  For $R \in (0,\infty)$ let $h\colon \overline{B_R(0)} \subseteq \R^d \to 
  \R^d$ be 
  continuous 
  and fulfill the condition
  \begin{align*}
    \ska{h(x)}{x} \geq 0, \quad \text{ for every } x \in \partial B_R(0).
  \end{align*}
  Then there exists at least one $x_0\in \overline{B_R(0)}$ such that 
  $h(x_0)=0$.
\end{lemma}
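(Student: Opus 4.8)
The plan is to argue by contradiction using Brouwer's fixed point theorem. Suppose, contrary to the assertion, that $h(x) \neq 0$ for every $x \in \overline{B_R(0)}$. Then, since $h$ is continuous and never vanishes on $\overline{B_R(0)}$, the mapping
\begin{align*}
  g \colon \overline{B_R(0)} \to \overline{B_R(0)}, \qquad
  g(x) := - R\, \frac{h(x)}{|h(x)|},
\end{align*}
is well-defined and continuous, and it maps the closed ball into itself (in fact onto its boundary $\partial B_R(0)$). By Brouwer's fixed point theorem there exists $x_0 \in \overline{B_R(0)}$ with $g(x_0) = x_0$.

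Since $|g(x)| = R$ for all $x \in \overline{B_R(0)}$, the fixed point necessarily satisfies $|x_0| = R$, that is, $x_0 \in \partial B_R(0)$. Hence the hypothesis of the lemma applies at $x_0$, and using $x_0 = g(x_0)$ we obtain
\begin{align*}
  0 \le \ska{h(x_0)}{x_0} = \ska{h(x_0)}{g(x_0)}
  = - \frac{R}{|h(x_0)|}\, \ska{h(x_0)}{h(x_0)}
  = - R\, |h(x_0)| < 0,
\end{align*}
where the final strict inequality uses $h(x_0) \neq 0$. This contradiction shows that $h$ must vanish at some $x_0 \in \overline{B_R(0)}$, which is the claim.

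The argument is essentially routine; the only point requiring care is the observation that the contradiction hypothesis---that $h$ does not vanish anywhere on $\overline{B_R(0)}$---is precisely what is needed both to normalize $h$ in the definition of $g$ and to make the last inequality strict, so no genuine obstacle arises. Should one prefer to avoid invoking Brouwer's theorem directly, an alternative is a degree-theoretic argument: a standard homotopy invariance applied to $H(t,x) := x - t\, h(x)$ on $\partial B_R(0)$, together with the sign condition $\ska{h(x)}{x} \ge 0$, forces the Brouwer degree of $h$ on $B_R(0)$ to equal that of the identity, hence to be nonzero, which again yields a zero of $h$. I would, however, present the short Brouwer-based proof above.
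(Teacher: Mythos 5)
Your proof is correct and is, in essence, the standard argument that appears in the reference the paper cites (Evans, Sec.~9.1) in lieu of a proof: assume $h$ is zero-free on the closed ball, normalize to get a continuous self-map $g(x) = -R\,h(x)/|h(x)|$ of $\overline{B_R(0)}$ with image on the sphere, invoke Brouwer's fixed point theorem, and derive the sign contradiction at the fixed point from the boundary condition $\ska{h(x)}{x}\ge 0$. All steps are justified; in particular, the contradiction hypothesis supplies both the well-definedness of $g$ (continuity plus compactness bound $|h|$ away from zero) and the strictness of the final inequality, exactly as you note.
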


A proof of Lemma~\ref{lem:root} is found, for instance, in 
\cite[Sec.~9.1]{evans1998}. 

\begin{remark} \label{rem:InnerProd}
	For a symmetric, positive definite $Q \in \R^{d,d}$ Lemma~\ref{lem:root} 
	can be extended as follows. If a function $h\colon B_{Q,R} \subseteq \R^d 
	\to \R^d$ where $B_{Q,R}$ is given by
	\begin{align*}
		B_{Q,R} = \{ x\in \R^d \, : \,\ska{Qx}{x} \leq R^2 \}
	\end{align*}
	is continuous and fulfills
	\begin{align*}
		\ska{Q h (x)}{x} \geq 0, \quad \text{ for every } x\in \partial B_{Q,R},
	\end{align*}
	then there exists $x_0 \in B_{Q, R}$ such that $h(x_0) = 0 $.
	This extension of Lemma~\ref{lem:root} can be proved by exploiting that
	\begin{align*}
		\ska{Q h (x)}{x} \geq 0, \quad \text{ for every } x\in \R^d \text{ with } 
		\ska{Qx}{x} = R^2,
	\end{align*}
	can be rewritten as
	\begin{align*}
		\ska{Q^{\frac{1}{2}} h (Q^{-\frac{1}{2} } y)}{y} \geq 0, \quad \text{ for 
		every } y \in \R^d \text{ with } \ska{y}{y} = R^2,
	\end{align*}
	using the transformation $y = Q^{\frac{1}{2}}x$.
\end{remark}

The next result is needed in order to prove the measurability of the sequence
generated by the implicit numerical method \eqref{eq4:RandBackEuler}. For a
closely related result we refer to \cite[Lem.~3.8]{gyoengy1982}. The proof
presented here follows an approach from \cite[Prop.~1]{DindosToma1997},
that can easily be extended to more general situations. 

\begin{lemma}\label{lem:measurable}
	Let $\tilde{\mathcal{F}}$ be a complete sub $\sigma$-algebra of the 
	$\sigma$-algebra
	$\F$, $\mathcal{M} \in \tilde{\mathcal{F}}$ with $\P(\mathcal{M}) = 1$ and $h
	\colon \Omega \times \R^d \to \R^d$ such that the following conditions are
	fulfilled. 
	\begin{enumerate}[label=(\roman*)]
		\item \label{item:cont}
		The mapping $x \mapsto h(\omega, x)$ is continuous for every $\omega 
		\in \mathcal{M}$. 
		\item \label{item:measurable} 
		The mapping $\omega \mapsto h(\omega, x)$ is 
		$\tilde{\mathcal{F}}$-measurable for every $x\in \R^d$.
		\item \label{item:root} 
		For every $\omega \in \mathcal{M}$ there exists a unique root of the 
		function $h(\omega,\cdot)$.
	\end{enumerate}	
  Define the mapping 
  \begin{align*}
    U \colon \Omega \to\R^d, \quad \omega \mapsto U(\omega),
  \end{align*}
  where $U(\omega)$ is the unique root of $h(\omega,\cdot)$ for $\omega \in 
  \mathcal{M}$ and $U(\omega)$ is arbitrary for $\omega \in \Omega\setminus 
  \mathcal{M}$. 
  Then $U$ is $\tilde{\mathcal{F}}$-measurable.
\end{lemma}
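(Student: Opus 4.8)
The plan is to show that $U$ is the pointwise limit (on $\mathcal{M}$) of a sequence of $\tilde{\mathcal{F}}$-measurable maps, hence itself $\tilde{\mathcal{F}}$-measurable. The natural candidates are approximate roots obtained by scanning a countable dense set. Concretely, for each $m \in \N$ let $D_m$ be the finite set of points in $\R^d$ with rational coordinates of denominator $m$ lying in the ball $\overline{B_m(0)}$, so that $D := \bigcup_m D_m$ is countable and dense in $\R^d$. For $\omega \in \mathcal{M}$ and $m \in \N$ define $U_m(\omega)$ to be a point $q \in D_m$ that minimizes $|h(\omega, q)|$ over $q \in D_m$ (breaking ties, say, by the lexicographic order on $D_m$, which is a fixed finite set so this is an unambiguous deterministic rule); set $U_m(\omega)$ arbitrarily, e.g. $0$, for $\omega \notin \mathcal{M}$.

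First I would check that each $U_m$ is $\tilde{\mathcal{F}}$-measurable. Since $D_m$ is finite and each map $\omega \mapsto h(\omega,q)$ is $\tilde{\mathcal{F}}$-measurable by hypothesis~\ref{item:measurable}, the map $\omega \mapsto |h(\omega,q)|$ is $\tilde{\mathcal{F}}$-measurable for each fixed $q$; the event that a particular $q$ is the (tie-broken) minimizer is a finite combination of the sets $\{|h(\cdot,q)| \le |h(\cdot,q')|\}$, hence in $\tilde{\mathcal{F}}$; and $\mathcal{M} \in \tilde{\mathcal{F}}$. So $U_m = \sum_{q \in D_m} q\,\mathds{1}_{\{U_m = q\}}$ is a finite sum of $\tilde{\mathcal{F}}$-measurable indicators times constants, and therefore $\tilde{\mathcal{F}}$-measurable.

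Next I would argue that $U_m(\omega) \to U(\omega)$ for every $\omega \in \mathcal{M}$. Fix such an $\omega$ and abbreviate $x^* = U(\omega)$, the unique root of $h(\omega,\cdot)$. Choose $R$ large enough that $x^* \in B_{R-1}(0)$. For $m \ge R$ the set $D_m$ is a grid of mesh $1/m$ covering $\overline{B_m(0)} \supseteq \overline{B_R(0)}$, so there is $q_m \in D_m$ with $|q_m - x^*| \le \sqrt{d}/m$; by continuity of $h(\omega,\cdot)$ (hypothesis~\ref{item:cont}) we get $|h(\omega,q_m)| \to |h(\omega,x^*)| = 0$, hence $|h(\omega,U_m(\omega))| \le |h(\omega,q_m)| \to 0$. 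It remains to deduce $U_m(\omega) \to x^*$ from $h(\omega,U_m(\omega)) \to 0$. This is the step that needs the most care and is the main obstacle: $h(\omega,\cdot)$ having a unique zero does not by itself preclude a minimizing sequence escaping to infinity or accumulating at a non-root. I would handle it by a compactness-plus-uniqueness argument restricted to a ball: first note $|h(\omega,U_m(\omega))| \le |h(\omega,q_m)| \le \sup_{|y-x^*|\le \sqrt d/m}|h(\omega,y)|$, which for large $m$ is bounded by a constant $c(\omega)$; then observe that if some subsequence of $U_m(\omega)$ had $|U_m(\omega)| \to \infty$ we could instead have picked the fixed nearby grid point $q_m$ to $x^*$ at cost $|h(\omega,q_m)| \to 0$, so the minimizer values $|h(\omega,U_m(\omega))|$ are $\to 0$ regardless; thus $(U_m(\omega))$ is trapped in the compact set $K(\omega) := \{y : |h(\omega,y)| \le 1\} \cap \overline{B_{\rho}(0)}$ for suitable finite $\rho$ once one rules out escape — and escape is ruled out precisely because along any subsequence leaving $\overline{B_\rho(0)}$ one would need $|h(\omega,\cdot)|$ small there, contradicting that the approximate minima tend to $0$ and the true minimum (namely $0$) is attained only at $x^*$.

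Having confined $(U_m(\omega))_m$ to a compact set with $h(\omega,U_m(\omega)) \to 0$, any accumulation point $\bar x$ satisfies $h(\omega,\bar x) = 0$ by continuity, hence $\bar x = x^*$ by uniqueness~\ref{item:root}; since the bounded sequence has a unique accumulation point, $U_m(\omega) \to x^* = U(\omega)$. Therefore $U = \lim_{m\to\infty} U_m$ pointwise on $\mathcal{M}$, and since pointwise limits of $\tilde{\mathcal{F}}$-measurable maps are $\tilde{\mathcal{F}}$-measurable (and the values of $U$ off $\mathcal{M}$, a $\tilde{\mathcal{F}}$-set of full measure, do not affect measurability once we note $U = \lim U_m \cdot \mathds{1}_{\mathcal{M}} + U\cdot\mathds{1}_{\Omega\setminus\mathcal{M}}$ with the latter part being whatever fixed measurable choice we made), the map $U$ is $\tilde{\mathcal{F}}$-measurable. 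I would streamline the write-up by phrasing the escape-to-infinity exclusion cleanly: since for each $m$ the specific grid point $q_m$ near $x^*$ is a competitor, $|h(\omega,U_m(\omega))| \le |h(\omega,q_m)|$, and the right-hand side is eventually $\le 1$, so all $U_m(\omega)$ with $m$ large lie in $\{|h(\omega,\cdot)| \le 1\}$; if that set were unbounded one restricts attention to the ball around $x^*$ of radius equal to $1$ plus a modulus-of-continuity bound — this makes the confinement explicit without heavy computation.
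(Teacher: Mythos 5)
Your approach is genuinely different from the paper's. The paper works with the multivalued maps $U_\varepsilon(\omega) = \{x : h(\omega,x)\in B_\varepsilon(0)\}$, shows that $U_\varepsilon^{-1}(A)$ is measurable by replacing $A$ with $A\cap\Q^d$, and then passes to the intersection over $\varepsilon = 1/j$; you instead try to realize $U$ as the pointwise limit of measurable argmin selections $U_m$ over the finite grids $D_m$. Both routes exploit density of $\Q^d$ and continuity of $h(\omega,\cdot)$, but your route needs a convergence statement $U_m(\omega)\to U(\omega)$ that is in fact false without an additional coercivity or properness hypothesis on $h(\omega,\cdot)$.

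Here is the gap, concretely. You correctly show $|h(\omega,U_m(\omega))| \to 0$, but you never actually establish that $(U_m(\omega))_m$ stays bounded. Since $U_m(\omega)$ is the \emph{global} argmin of $|h(\omega,\cdot)|$ over the growing grid $D_m \subset \overline{B_m(0)}$, nothing prevents the minimizer from running off to infinity when $|h(\omega,\cdot)|$ decays at infinity faster than its value near $x^*$ on the grid. A one-dimensional example: take $h(\omega,y) = (y-\pi)/(1 + |y-\pi|^4)$, whose unique root is $\pi\notin\Q$. The grid point $q_m^*\in D_m$ nearest $\pi$ gives $|h(\omega,q_m^*)|\sim m^{-1}$, whereas the boundary grid point $q=m$ gives $|h(\omega,m)|\sim m^{-3}$, which is smaller. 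So $U_m(\omega)\to\infty$, not to $\pi$. The ``compactness-plus-uniqueness'' paragraph does not repair this: saying that all large-$m$ minimizers lie in $\{|h(\omega,\cdot)|\le 1\}$ is true but that set can be unbounded (as in the example), and the suggestion to ``restrict attention to the ball around $x^*$'' after the fact is not available to you, since $U_m$ was already defined as the minimizer over all of $D_m$, and such a ball would in any case depend measurably on the unknown $U(\omega)$ — circular. A correct version of your idea would replace global argmins by argmins over a \emph{fixed} compact set: show that for rational $p,r$ the event $\{U(\omega)\in\overline{B_r(p)}\}$ equals $\mathcal{M}\cap\{\inf_{q\in\overline{B_r(p)}\cap\Q^d}|h(\omega,q)| = 0\}$ (both inclusions now follow from compactness, continuity, and uniqueness of the root), which is manifestly $\tilde{\mathcal{F}}$-measurable, and then appeal to the fact that closed balls with rational data generate $\B(\R^d)$.
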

  
\begin{proof} 
  Define the (multivalued) mapping  
  \begin{align*}
    U_{\varepsilon}\colon \Omega \to \Pow(\R^d), \quad 
    U_{\varepsilon}(\omega) &:= \{ x\in \R^d\, : \,h(\omega,x) \in   
    B_{\varepsilon}(0) \}
  \end{align*}
  for $\varepsilon >0$. We first show for an arbitrary open set $A \in 
  \B(\R^d)$ that the set
  \begin{align*}
    U_{\varepsilon}^{-1}(A) &:= 
    \{ \omega \in \Omega\, : \,\text{ there exists } 
    x\in A \text{ such that } h(\omega, x) \in B_{\varepsilon}(0) \}\\
    &= \bigcup_{x \in A} \{ \omega \in \Omega \, : \,h(\omega,x) \in 
    B_{\varepsilon}(0) \}
  \end{align*}
	is an element of $\tilde{\mathcal{F}}$. To this end, first note
	that $h(\cdot, x)^{-1}(B_{\varepsilon}(0)) \in \tilde{\mathcal{F}}$
	since $\omega \mapsto h(\omega,x)$ is $\tilde{\mathcal{F}}$-measurable.
  Then, it follows that
  \begin{align*}
    U_{\varepsilon}^{-1}(A \cap \Q^d) 
    &= \bigcup_{x \in A\cap \Q^d} \{ \omega \in \Omega \, : \,h(\omega,x) \in 
    B_{\varepsilon}(0) \} \\
    &= \bigcup_{x \in A\cap \Q^d} h(\cdot, x)^{-1}(B_{\varepsilon}(0)) 
		\in \tilde{\mathcal{F}}.
  \end{align*}
  It remains to verify the equality
  \begin{align}
    \label{eq4:seteq}
    U_{\varepsilon}^{-1}(A) = U_{\varepsilon}^{-1}(A \cap \Q^d).
  \end{align}
  It is clear that $U_{\varepsilon}^{-1}(A \cap \Q^d)$ is a subset of 
  $U_{\varepsilon}^{-1}(A)$. 
  
  To prove $U_{\varepsilon}^{-1}(A) \subseteq 
  U_{\varepsilon}^{-1}(A  \cap \Q^d)$ we consider two cases. If 
  $U_{\varepsilon}^{-1}(A)$ is a subset of $\Omega \setminus \mathcal M$ 
	then it is a null set and lies in $\tilde{\mathcal{F}}$
	due to the completeness of the $\sigma$-algebra. 
  Else, we can assume that there exist $\omega \in 
  U_{\varepsilon}^{-1}(A) \cap \mathcal M$ 
  and $x_0 \in A$ with $h(\omega,x_0) \in B_{\varepsilon}(0)$.
  In particular, we note that the function $x\mapsto h(\omega,x)$ is
  continuous, since $\omega \in \mathcal{M}$.
  Further, observe that $A$ is an open neighborhood of $x_0$ and
  $B_{\varepsilon}(0)$ is an open neighborhood of $h(\omega,x_0)$.
  Since $B_{\varepsilon}(0)$ is open, the continuity of $h$ 
  implies that the set 
  \begin{align*}
    C := h(\omega, \cdot )^{-1}(B_{\varepsilon}(0))
  \end{align*}
  is an open set in $\R^d$ with $x_0 \in C$. Thus, $C \cap A$ is nonempty and
  open. Therefore, there exists
  $\overline{x} \in ( C \cap A ) \cap \Q^d$ such that 
  $h(\omega,\overline{x}) \in B_{\varepsilon}(0)$. This
  implies $ \omega \in U_{\varepsilon}^{-1}(A  \cap \Q^d)$ and completes
  the proof of \eqref{eq4:seteq}. Consequently, $U_{\varepsilon}^{-1}(A)
	\in \tilde{\mathcal{F}}$ for each open set $A \in \B(\R^d)$.
  
  Next, recall that for each $\omega \in \mathcal M$ the image of $U$ is
  defined as the unique element of $h(\omega,\cdot)^{-1} (\{ 0 \})$. Thus,
  the set 
  \begin{align*}
    U_0(\omega) := \bigcap_{j \in \N} U_{\frac{1}{j}}(\omega)
  \end{align*}
  consists of a single element which coincides with $U(\omega)$. 
  Therefore we obtain
  \begin{align*}
    \mathcal{M} \cap U^{-1}(A)
    &= \mathcal{M} \cap \{ \omega \in \Omega :\text{ there exists } x\in A 
    \text{ such that } h(\omega,x) = 0
    \} \\
    &= \mathcal{M} \cap \bigcap_{j \in \N} \{ \omega \in \Omega \, : \,\text{ 
    there 
    exists } x\in A \text{ such that }  h(\omega,x) \in  B_{\frac{1}{j}}(0)\} 
    \\
    &= \mathcal{M} \cap \bigcap_{j \in \N} U_{\frac{1}{j}}^{-1}(A)
    = \mathcal{M} \cap \bigcap_{j \in \N} U_{\frac{1}{j}}^{-1}(A\cap 
    \Q^d) \in \tilde{\mathcal{F}},
  \end{align*}
	which also implies $U^{-1}(A) \in \tilde{\mathcal{F}}$
	for every open set $A \in \B(\R^d)$ due to the completeness 
	of $\tilde{\mathcal{F}}$. From this the measurability of
  the mapping $\omega \mapsto U(\omega)$ follows.
\end{proof}  

\begin{lemma}
  \label{lem:exist}
  Let Assumption~\ref{as:ODEf} be satisfied. Then for each $N\in\N$ with $ 
  \frac{T}{N} \nu =k \nu < 1$ there  exists a unique solution $U= 
  (U^n)_{n\in \{0,\dots, N \}} \in \mathcal{G}^2_N$ to 
	the implicit scheme \eqref{eq4:RandBackEuler}.
\end{lemma}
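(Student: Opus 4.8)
The plan is to establish existence and uniqueness step by step along the grid, by induction on $n \in \{1,\dots,N\}$. Suppose $U^{n-1} \in L^2(\Omega,\F_{n-1},\P;\R^d)$ has already been constructed (for $n=1$ this is the deterministic $u_0$). I would like to define $U^n$ as the solution of the implicit equation $x = U^{n-1} + k f(\tn_n,x)$. For this I would apply Lemma~\ref{lem:measurable} with $\tilde{\mathcal{F}} = \F_n$ to the mapping $h(\omega,x) := x - U^{n-1}(\omega) - k f(\tn_n(\omega),x)$. Condition \ref{item:cont} (continuity in $x$) is not immediate, since Assumption~\ref{as:ODEf} only gives local Lipschitz continuity in $x$ for almost all $t$; so first I would note that, since $\tn_n$ has an absolutely continuous distribution, $\tn_n(\omega) \notin \NN_f$ for $\P$-almost all $\omega$, and on this full-measure set $\mathcal{M}$ the map $x \mapsto f(\tn_n(\omega),x)$ is continuous (local Lipschitz continuity on every compact set), hence so is $h(\omega,\cdot)$. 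Condition \ref{item:measurable} ($\F_n$-measurability of $\omega \mapsto h(\omega,x)$ for fixed $x$) follows since $U^{n-1}$ is $\F_{n-1} \subseteq \F_n$-measurable, $\tn_n$ is $\F_n$-measurable, and $f$ is jointly measurable.

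The heart of the matter is condition \ref{item:root}: for $\omega \in \mathcal{M}$, the equation $h(\omega,x)=0$ has a \emph{unique} root. For uniqueness I would use the one-sided Lipschitz condition: if $x,y$ are two roots then $x-y = k\big(f(\tn_n(\omega),x) - f(\tn_n(\omega),y)\big)$, so taking the inner product with $x-y$ and using Assumption~\ref{as:ODEf}(i) gives $|x-y|^2 = k\big(f(\tn_n,x)-f(\tn_n,y),x-y\big) \le k\nu|x-y|^2$, and since $k\nu < 1$ this forces $x=y$. For existence I would apply Lemma~\ref{lem:root}: set $\tilde h(x) := -h(\omega,x) = U^{n-1}(\omega) + k f(\tn_n(\omega),x) - x$ (or work with $h$ directly and check the sign), and verify the coercivity-type boundary condition on a ball of suitable radius $R$. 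Using \eqref{eq3:coerc}, for $x$ with $|x|=R$ one has
\begin{align*}
  \big(x - U^{n-1}(\omega) - k f(\tn_n(\omega),x), x\big)
  &= |x|^2 - \big(U^{n-1}(\omega),x\big) - k\big(f(\tn_n(\omega),x),x\big)\\
  &\ge R^2 - |U^{n-1}(\omega)|\,R - k\nu R^2 - k\,g(\tn_n(\omega))\,R\\
  &= R\Big( (1-k\nu)R - |U^{n-1}(\omega)| - k\,g(\tn_n(\omega)) \Big),
\end{align*}
which is nonnegative once $R \ge (1-k\nu)^{-1}\big(|U^{n-1}(\omega)| + k\,g(\tn_n(\omega))\big)$; here $1-k\nu>0$ is exactly where the step size restriction is used. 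Hence $h(\omega,\cdot)$ has a root in $\overline{B_R(0)}$, and by the uniqueness argument it is the unique root, establishing \ref{item:root}. Lemma~\ref{lem:measurable} then yields an $\F_n$-measurable $U^n \colon \Omega \to \R^d$ with $h(\omega,U^n(\omega))=0$ for $\omega \in \mathcal{M}$, i.e. $U^n = U^{n-1} + k f(\tn_n,U^n)$ holds $\P$-a.s.

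It remains to check the integrability bookkeeping needed for $U \in \mathcal{G}^2_N$: one needs $U^n, f(\tn_n,U^n) \in L^2(\Omega,\F_n,\P;\R^d)$. From $(1-k\nu)|U^n| \le |U^{n-1}| + k\,g(\tn_n)$ (the same computation as above with $x = U^n(\omega)$, after dividing by $|U^n|$ when nonzero) I would get $|U^n| \le (1-k\nu)^{-1}\big(|U^{n-1}| + k\,g(\tn_n)\big)$ pointwise $\P$-a.s., and since $U^{n-1} \in L^2(\Omega)$ by the induction hypothesis and $\E[g(\tn_n)^2] = k^{-1}\int_{t_{n-1}}^{t_n} g(s)^2\,\diff s < \infty$ (uniform distribution of $\tn_n$ on $[t_{n-1},t_n]$ and $g \in L^2(0,T)$), we conclude $U^n \in L^2(\Omega,\F_n,\P;\R^d)$. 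Then $f(\tn_n,U^n) = k^{-1}(U^n - U^{n-1}) \in L^2(\Omega,\F_n,\P;\R^d)$ as well. This closes the induction and proves both existence and uniqueness in $\mathcal{G}^2_N$. I expect the only genuinely delicate point to be the verification of the hypotheses of Lemma~\ref{lem:measurable} — in particular arguing that $\tn_n$ avoids the exceptional null set $\NN_f$ almost surely so that continuity in $x$ is available — while the existence/uniqueness of the root and the $L^2$-bound are routine applications of \eqref{eq3:coerc}, Assumption~\ref{as:ODEf}(i), and the condition $k\nu<1$.
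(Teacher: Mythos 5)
Your proposal is correct and follows essentially the same route as the paper's own proof: induction over $n$, formation of the root-finding map $h_n(\omega,x) = x - U^{n-1}(\omega) - k f(\tn_n(\omega),x)$ on a full-measure set $\mathcal{M}$, existence of a root via Lemma~\ref{lem:root} using the coercivity estimate \eqref{eq3:coerc} with the same radius $R = (1-k\nu)^{-1}\big(|U^{n-1}(\omega)| + k\,g(\tn_n(\omega))\big)$, uniqueness from the one-sided Lipschitz condition and $k\nu<1$, $\F_n$-measurability via Lemma~\ref{lem:measurable}, and finally the square-integrability of $U^n$ and of $f(\tn_n,U^n)=k^{-1}(U^n-U^{n-1})$. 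The only cosmetic difference is in the $L^2$ bound: you first derive the pointwise a.s. estimate $(1-k\nu)|U^n|\le|U^{n-1}|+k\,g(\tn_n)$ and then take $L^2(\Omega)$-norms using $\E[g(\tn_n)^2]=k^{-1}\int_{t_{n-1}}^{t_n}g(s)^2\,\diff{s}$, while the paper takes the expectation of the inner-product identity directly and applies Cauchy--Schwarz at the $L^2(\Omega)$ level; both yield the same conclusion. One small point worth making explicit (the paper builds it into its definition of $\mathcal{M}$): you also need $g(\tn_n(\omega))<\infty$ and $|U^{n-1}(\omega)|<\infty$ on $\mathcal{M}$ so that the radius $R(\omega)$ is finite — this holds for $\P$-a.e. $\omega$ because $g\in L^2(0,T)$ and $U^{n-1}\in L^2(\Omega)$, but it should be stated alongside the condition $\tn_n(\omega)\notin\NN_f$.
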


\begin{proof}
  The assertion $U^n \in L^2(\Omega,\F_n,\P ;\R^d)$ is proved using an 
  inductive argument for $n \in \{ 0,\dots,N \}$.  Since $U^0 \equiv u_0 \in 
  L^2(\Omega,\F_0,\P ;\R^d)$ the
	case $n = 0$ is evident. Next, assuming $U^{n-1} \in L^2(\Omega,\F_{n-1},\P
	;\R^d)$ exists, we define the set
  \begin{align*}
    \mathcal{M} = \{ \omega \in \Omega\, : \,g( \tn_n(\omega) ) < \infty, 
    \ |U^{n-1}(\omega)| < \infty \text{ and }
    \tn_n(\omega) \in [0,T] \setminus \NN_f \},
  \end{align*}
  where $\NN_f \in \B([0,T])$ is the null set from Assumption~\ref{as:ODEf}.
  Since $\|g\|_{L^2(0,T;\R)} < \infty$ and $\|U^{n-1}\|_{L^2(\Omega;\R^d)} < 
  \infty$ the set fulfills $\P (\mathcal{M}) =1$. We define the function $h_n$ 
  by
  \begin{equation}
    \label{eq:defH}
    h_n \colon \Omega \times \R^d \to \R^d, \quad h_n(\omega, x) = x - 
    U^{n-1}(\omega) 
    - kf(\tn_n(\omega), x).
  \end{equation}
  In the following we consider a fixed $\omega \in \mathcal{M}$. Then the 
  mapping $h_n(\omega,\cdot)$ is continuous by Assumption~\ref{as:ODEf} (iii). 
  Further we write 
  \begin{align*}
    R = R(\omega) = \frac{1}{1-\nu k} (| U^{n-1}(\omega) | + k g( 
    \tn_n(\omega) )).
  \end{align*}
  Thus, for each $ x\in \R^d$ with $|x| = R$ this implies
  \begin{align*}
    \ska{h_n(\omega, x)}{x} 
    & = |x|^2 - \ska{U^{n-1}(\omega) }{x} - k \ska{f( \tn_n(\omega ), x )}{x}\\
    & \geq R^2 -  |U^{n-1}(\omega)| R - k \nu R^2 - k g(\tn_n(\omega)) R\\  
    & = R^2 - k \nu R^2  - (|U^{n-1}(\omega)| + k g(\tn_n(\omega))) R \\ 
    & \geq (1-\nu k) R^2 - (1-\nu k) R^2 = 0. 
  \end{align*}
  Hence, by Lemma~\ref{lem:root}, for every $\omega \in \mathcal{M}$ there 
  exists $x = x(\omega) \in 
  \R^d$ such that $h_n(\omega, x)=0$ holds. This $x$ is always unique: Assume
  there exists $\omega \in \mathcal{M}$ and $x,y \in  \R^d$ such that
  \begin{align*}
    x = U^{n-1}(\omega) + kf(\tn_n(\omega), x) 
    \quad \text{ and } \quad
    y = U^{n-1}(\omega) + kf(\tn_n(\omega), y) 
  \end{align*}
  hold. Then we can write for the difference
  \begin{align*}
    |x -y|^2 
    & = k \big( f(\tn_n(\omega), x) - f(\tn_n(\omega), y), x - y \big)\\
    & \leq k \nu |x - y|^2 < |x - y|^2 
  \end{align*}
  which implies $x = y$. Thus, the function $h_n \colon \Omega \times \R^d \to 
  \R^d$ is $\F_n$-measurable in the first entry, continuous in the second and 
  has a unique root $x$ for every $\omega \in \mathcal{M}$. 
  Then, Lemma~\ref{lem:measurable} implies that the function 
  \begin{align*}
    U^n \colon \Omega \to \R^d, \quad \omega \mapsto U^n(\omega),
  \end{align*}
  where $U^n(\omega)$ is the unique root of $h_n(\omega, \cdot)$ for $\omega 
  \in \mathcal{M}$ and $U^{n-1}(\omega)$ for $\omega \in \Omega \setminus 
  \mathcal{M}$ is $\F_n$-measurable.  

  It remains to prove that $U^n$ is finite with respect to the
  $L^2(\Omega;\R^d)$-norm. Using \eqref{eq3:coerc}, it follows
  \begin{align*}
    \| U^n \|_{L^2(\Omega;\R^d)}^2
    & = \E [ \ska{U^{n-1}}{U^n} + k \ska{ f(\tn_n, U^n) }{ U^n }] \\
    & \leq \|U^{n-1}\|_{L^2(\Omega;\R^d)} \| U^n \|_{L^2(\Omega;\R^d)} 
    + k  \E [ \nu | U^n|^2 + g(\tn_n) |U^n|] \\
    & \leq \|U^{n-1}\|_{L^2(\Omega;\R^d)} \| U^n \|_{L^2(\Omega;\R^d)} 
    + k \nu \| U^n\|^2_{L^2(\Omega;\R^d)} \\
    & \quad  + \sqrt{k} \|g\|_{L^2(0,T;\R)} \|U^n 
    \|_{L^2(\Omega;\R^d)}
  \end{align*}
  and therefore
  \begin{align*}
    \|U^n\|_{L^2(\Omega;\R^d)}
    & \leq \frac{1}{1 - k \nu } \big(\|U^{n-1}\|_{L^2(\Omega;\R^d)} + \sqrt{k} 
    \|g\|_{L^2(0,T;\R)} \big).
  \end{align*} 
  The last step is to prove that the function $f(\tn_n, U^n)$ also lies in
  $L^2(\Omega, \F_n,\P;\R^d)$. The mapping $\omega \mapsto f(\tn_n(\omega),
  U^n(\omega))$ is $\F_n$-measurable since $f$ is measurable and both 
  $\tn_n$ and $U^n$ are $\F_n$-measurable. 
  Since both $U^n$ and $U^{n-1}$ are elements of $L^2(\Omega;\R^d)$ we can 
  write
  \begin{align*}
    \|f(\tn_n,U^n) \|_{L^2(\Omega;\R^d)} 
    = \| \tfrac{1}{k}(U^n - U^{n-1}) \|_{L^2(\Omega;\R^d)}.
  \end{align*}
  Thus, $f(\tn_n,U^n)$ is finite in the $L^2(\Omega;\R^d)$-norm.
\end{proof}


The following stability lemma will play an important role in the error
analysis of the randomized backward Euler method. Its proof is based on
techniques developed in \cite{andersson2017}. For its formulation we
introduce the \emph{local residual} $(\rho_N^n(V))_{n \in \{0,\ldots,N\}}$, $N
\in \N$, of an arbitrary grid function $V = (V^n)_{n \in \{0,\ldots,N\}} \in
\mathcal{G}^2_N$. More precisely, for every $n \in \{1,\ldots,N\}$ we define
$\rho_N^n(V)$ by  
\begin{align}
  \label{eq:locresidual}
  \rho_N^n(V) = k f(\tn_n, V^n) - V^n + V^{n-1}.
\end{align}
Since $(V^n)_{n \in \{0,\ldots,N\}} \in \mathcal{G}^2_N$ it directly follows 
that $\rho_N^n(V) \in L^2(\Omega,\F_n,\P;\R^d)$ for every $n \in
\{0,\ldots,N\}$.

\begin{lemma}
  \label{lem:stab}
  Let Assumption~\ref{as:ODEf} be satisfied. For $N \in \N$ let $(U^n)_{n \in
  \{0,\ldots,N\}} \in \mathcal{G}^2_N$ be the grid function generated by
  \eqref{eq4:RandBackEuler} with step size $k = \frac{T}{N}$. If $\nu k <
  \frac{1}{4}$, then for every $V \in \mathcal{G}^2_N$ it holds true that
  \begin{align*}
    \big\| U^n - V^n \big\|_{L^2(\Omega;\R^d)}&\\
    \le \ee^{(2 \nu + 1) t_n } \bigg(&  \big| U^0 - V^0 \big|^2 \\
    & +  \sum_{j = 1}^n 
    \Big( 2 \big\| \rho_N^j(V) \big\|^2_{L^2(\Omega;\R^d)}  
    + \frac{2}{k} \big\| \E [ \rho_N^j(V) | \F_{j-1} 
    ]\big\|^2_{L^2(\Omega;\R^d)} \Big)
    \bigg)^{\frac{1}{2}}
  \end{align*}
  for every $n \in \{1, \ldots, N\}$.
\end{lemma}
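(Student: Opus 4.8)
The plan is to estimate the evolution of the error $E^n := U^n - V^n$ in the $L^2(\Omega;\R^d)$-norm by testing the defining recursions against $E^n$ and exploiting the one-sided Lipschitz condition. Writing out the two schemes, we have $U^n - U^{n-1} = k f(\tn_n,U^n)$ and $V^n - V^{n-1} = k f(\tn_n,V^n) - \rho_N^n(V)$, so that
\begin{align}
  \label{eq:errorrec}
  E^n - E^{n-1} = k\big( f(\tn_n,U^n) - f(\tn_n,V^n) \big) + \rho_N^n(V).
\end{align}
The first step is the standard energy identity: take the inner product of \eqref{eq:errorrec} with $E^n$, use the algebraic identity $\inner{a-b}{a} = \tfrac12|a|^2 - \tfrac12|b|^2 + \tfrac12|a-b|^2$ to get $\inner{E^n-E^{n-1}}{E^n} = \tfrac12(|E^n|^2 - |E^{n-1}|^2 + |E^n - E^{n-1}|^2)$, and apply Assumption~\ref{as:ODEf}~(i) to the difference term, yielding $k\inner{f(\tn_n,U^n)-f(\tn_n,V^n)}{E^n} \le k\nu |E^n|^2$ pathwise on a set of full measure. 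This gives a pointwise-in-$\omega$ inequality of the form $\tfrac12|E^n|^2 \le \tfrac12|E^{n-1}|^2 + k\nu|E^n|^2 + \inner{\rho_N^n(V)}{E^n}$, modulo the squared-increment term which we keep for now or drop.

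The heart of the argument — and the place where the randomization is genuinely used — is the treatment of $\E[\inner{\rho_N^n(V)}{E^n}]$. Here I would \emph{not} simply Cauchy--Schwarz the whole term, since that would only give an $O(k^{1/2})$ residual bound after summation rather than the sharper splitting in the statement. Instead, split $E^n = E^{n-1} + (E^n - E^{n-1})$ and write
\begin{align*}
  \E\big[ \inner{\rho_N^n(V)}{E^n} \big]
  = \E\big[ \inner{\rho_N^n(V)}{E^{n-1}} \big]
  + \E\big[ \inner{\rho_N^n(V)}{E^n - E^{n-1}} \big].
\end{align*}
For the first summand, use that $E^{n-1}$ is $\F_{n-1}$-measurable and the tower property to replace $\rho_N^n(V)$ by $\E[\rho_N^n(V)\mid\F_{n-1}]$, then apply Cauchy--Schwarz and Young's inequality in the form $ab \le \tfrac{1}{2k}a^2 + \tfrac{k}{2}b^2$ — wait, more carefully, $\E[\inner{\E[\rho_N^n(V)|\F_{n-1}]}{E^{n-1}}] \le \tfrac{1}{2k}\|\E[\rho_N^n(V)|\F_{n-1}]\|_{L^2}^2 + \tfrac{k}{2}\|E^{n-1}\|_{L^2}^2$, which is exactly where the conditional-expectation term with its $1/k$ weight in the statement comes from, and the $\tfrac{k}{2}\|E^{n-1}\|^2$ feeds into the Gronwall sum. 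For the second summand, bound $\inner{\rho_N^n(V)}{E^n-E^{n-1}} \le \tfrac12|\rho_N^n(V)|^2 + \tfrac12|E^n-E^{n-1}|^2$, and here the retained squared-increment term $\tfrac12|E^n-E^{n-1}|^2$ from the energy identity is crucial to absorb this; one needs $|E^n-E^{n-1}|^2 \le 2k^2|f(\tn_n,U^n)-f(\tn_n,V^n)|^2 + 2|\rho_N^n(V)|^2$ together with the one-sided bound, and the step-size restriction $\nu k < \tfrac14$ is precisely what makes the absorption of the $k^2|f\text{-difference}|^2$ term — which one controls via $k^2|f\text{-diff}|^2 \le$ (something)$\cdot k\nu|E^n|^2$ after using the structure — go through with room to spare, leaving a factor like $(1-4\nu k)$ positive on the left.

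After collecting terms, I expect an inequality of the shape
\begin{align*}
  \|E^n\|_{L^2}^2 \le \|E^{n-1}\|_{L^2}^2 + (2\nu+1)k\,\|E^n\|_{L^2}^2 + (2\nu+1)k\,\|E^{n-1}\|_{L^2}^2 + 2\|\rho_N^j(V)\|_{L^2}^2 + \tfrac2k\|\E[\rho_N^j(V)|\F_{j-1}]\|_{L^2}^2,
\end{align*}
or something close to it after relabeling; summing over $j = 1,\dots,n$, moving the $\|E^n\|$ term to the left (again using the smallness of $\nu k$), and applying the discrete Gronwall lemma (Lemma~\ref{lem:discreteGronwall}) with $b_j = c(2\nu+1)k$ gives the factor $\ee^{(2\nu+1)t_n}$ since $\sum_{j} k = t_n$. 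The main obstacle is bookkeeping: tracking the various constants through Young's inequality so that the squared-increment term is exactly absorbed and the final constants match $2$, $2/k$, and $\ee^{(2\nu+1)t_n}$; the conceptual content — energy estimate plus conditional-expectation splitting plus Gronwall — is routine once the decomposition $E^n = E^{n-1} + (E^n-E^{n-1})$ is in place, and getting that decomposition to interact correctly with the adaptedness is the one genuinely delicate point.
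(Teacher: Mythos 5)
Your high-level plan matches the paper's proof exactly: the energy identity via polarization, the decomposition $E^n = E^{n-1} + (E^n-E^{n-1})$ under $\E[\,(\rho_N^n(V),E^n)\,]$, the tower property to pass to the conditional expectation on the $E^{n-1}$ piece, Young with weights $1/k$ and $k$, and the discrete Gronwall lemma giving $\ee^{(2\nu+1)t_n}$. That decomposition is indeed the key idea, and you have it right.

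However, your treatment of the squared-increment term contains a genuine error. After bounding $2(\rho_N^n(V), E^n - E^{n-1}) \le |\rho_N^n(V)|^2 + |E^n - E^{n-1}|^2$ (in the paper's normalization), the term $|E^n - E^{n-1}|^2$ on the right-hand side cancels \emph{exactly} against the $|E^n-E^{n-1}|^2$ produced by the polarization identity on the left-hand side; there is nothing left to estimate. Your additional step of bounding $|E^n-E^{n-1}|^2 \le 2k^2|f(\tn_n,U^n)-f(\tn_n,V^n)|^2 + 2|\rho_N^n(V)|^2$ and then trying to control $k^2|f(\tn_n,U^n)-f(\tn_n,V^n)|^2$ by something like $k\nu|E^n|^2$ is both unnecessary and \emph{does not follow from the assumptions}: the one-sided Lipschitz condition bounds the inner product $(f(t,x)-f(t,y),\,x-y)$, not the norm $|f(t,x)-f(t,y)|$, and under Assumption~\ref{as:ODEf} there is no global Lipschitz bound on $f$ in the state variable (only a local one, with a possibly unbounded $t$-dependent Lipschitz constant $L_K$). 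Had the cancellation not been exact, this step would be the place where the argument breaks.

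Consequently your explanation of where the step-size restriction $\nu k < \tfrac14$ enters is also wrong. It is not used for absorbing an $f$-difference term. Rather, after summing and telescoping one arrives at $(1-2\nu k)\|E^n\|^2_{L^2} \le (1-2\nu k)\|E^0\|^2_{L^2} + (2\nu+1)k\sum_{j=1}^n\|E^{j-1}\|^2_{L^2} + \sum_{j=1}^n\big(\|\rho_N^j(V)\|^2_{L^2} + \tfrac1k\|\E[\rho_N^j(V)\,|\,\F_{j-1}]\|^2_{L^2}\big)$, and $\nu k < \tfrac14$ is needed solely to have $(1-2\nu k)^{-1} \le 2$, which produces the factors $2$ and $2/k$ in the statement before Gronwall is applied.
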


\begin{proof}
  Let $N \in \N$ and $V = (V^j)_{j \in \{0,\ldots,N\}} \in \mathcal{G}^2_N$ be
  arbitrary. Set $E^j := U^j - V^j$ for each $j \in \{0,\ldots,N\}$.
  Since $(a -b, a) = \frac{1}{2} \big(  | a |^2 - | b |^2 +  | a - b
  |^2 \big)$ for all $a, b \in \R^d$ we get for every $j \in \{1,\ldots,N\}$
  \begin{align*}
    &|E^j|^2 - | E^{j-1} |^2 + | E^j - E^{j-1}|^2
    = 2 \big( E^j - E^{j-1}, E^j \big) \\
    &\quad = 2 k \big( f(\tn_j, U^{j} ) 
    - f(\tn_j, V^j ) , E^j \big)  + 2 \big( k f( \tn_j, V^j ) 
    - V^j + V^{j - 1}, E^j \big).
  \end{align*}
  Next, note that $\P( \tn_j \in \NN_f ) = 0$, where
  $\NN_f$ denotes the null set from Assumption~\ref{as:ODEf}.
  Hence, we can apply Assumption~\ref{as:ODEf} (i) to the first
  term on a set with probability one. In addition, we insert
  \eqref{eq:locresidual} into the second term and 
  obtain the inequality
  \begin{align*}
    &|E^j|^2 - | E^{j-1} |^2  +  | E^j - E^{j-1}|^2\\ 
    &\quad \le 2 \nu k | E^j |^2 +  2 \big( \rho_N^j(V), E^j - E^{j-1} \big)
    + 2 \big( \rho_N^j(V), E^{j-1} \big) \quad \text{almost surely.}
  \end{align*}
  After taking the expected value we further observe that
  \begin{align*}
    \E \big[ \big( \rho_N^j(V), E^{j-1} \big) \big]
    &= \big\langle \rho_N^j(V), E^{j-1} \big\rangle_{L^2(\Omega;\R^d)} 
    =  \big\langle \E [ \rho_N^j(V) | \F_{j-1} ], E^{j-1} 
    \big\rangle_{L^2(\Omega;\R^d)},
  \end{align*}
  since $E^{j-1}$ is $\F_{j-1}$-measurable. Then, applications of the
  Cauchy--Schwarz inequality and the weighted Young inequality yield
  \begin{align*}
    &2 \big\langle \E [ \rho_N^j(V) | \F_{j-1} ], E^{j-1} 
    \big\rangle_{L^2(\Omega;\R^d)}\\
    &\quad \le \frac{1}{k} \big\| \E [ \rho_N^j(V) | \F_{j-1} ] 
    \big\|^2_{L^2(\Omega;\R^d)}
    + k \big\| E^{j-1} \big\|^2_{L^2(\Omega;\R^d)}.
  \end{align*}
  In the same way, the Cauchy--Schwarz and Young inequalities also yield
  \begin{align*}
    \E \big[ 2 \big( \rho_N^j(V), E^j - E^{j-1} \big) \big]
    \le \big\| \rho_N^j(V) \big\|^2_{L^2(\Omega;\R^d)} 
    + \big\| E^j - E^{j-1} \big\|^2_{L^2(\Omega;\R^d)}.    
  \end{align*}
  Altogether, we have shown that
  \begin{align*}
    &\|E^j\|^2_{L^2(\Omega;\R^d)} - \| E^{j-1} \|^2_{L^2(\Omega;\R^d)}\\ 
    &\quad \le  2 \nu k \| E^j \|^2_{L^2(\Omega;\R^d)}
    + \big\| \rho_N^j(V) \big\|^2_{L^2(\Omega;\R^d)} \\
    & \qquad +  \frac{1}{k} \big\| \E [ \rho_N^j(V) | \F_{j-1} ] 
    \big\|^2_{L^2(\Omega;\R^d)}
    + k \big\| E^{j-1} \big\|^2_{L^2(\Omega;\R^d)},
  \end{align*}
  where we canceled the term $\| E^j - E^{j-1} \|^2_{L^2(\Omega;\R^d)}$ on both
  sides of the inequality. Then, after some rearrangements and summing this
  inequality for all $j \in  \{1,\ldots,n \}$ with arbitrary $n \in  \{1,
  \ldots, N\}$ we obtain 
  \begin{align*}
    (1 - 2 \nu k ) \| E^n \|^2_{L^2(\Omega;\R^d)} 
    &\le (1 - 2 \nu k ) \| E^0 \|^2_{L^2(\Omega;\R^d)} 
    + (2 \nu + 1) k \sum_{j = 1}^{n} \big\|  E^{j - 1} 
    \big\|^2_{L^2(\Omega;\R^d)}\\
    &\quad + \sum_{j = 1}^n \Big( \big\| 
    \rho_N^j(V)\big\|^2_{L^2(\Omega;\R^d)}  
    + \frac{1}{k} \big\| \E [ \rho_N^j(V) | \F_{j-1} ] 
    \big\|^2_{L^2(\Omega;\R^d)} \Big). 
  \end{align*}
  Next, note that from the assumption $\nu k <
  \frac{1}{4}$ it follows $(1 - 2 \nu k)^{-1} \le 2$. Therefore,
  \begin{align*}
    \| E^n \|^2_{L^2(\Omega;\R^d)} 
    &\le \| E^0 \|^2_{L^2(\Omega;\R^d)} 
    + 2 (2 \nu + 1) k \sum_{j = 1}^{n} 
    \big\|  E^{j - 1} \big\|^2_{L^2(\Omega;\R^d)}\\
    &\quad +  \sum_{j = 1}^n \Big( 2 
    \big\| \rho_N^j(V) \big\|^2_{L^2(\Omega;\R^d)}  
    + \frac{2}{k} \big\| \E [ \rho_N^j(V) | \F_{j-1} ] 
    \big\|^2_{L^2(\Omega;\R^d)} \Big).
  \end{align*}
  Finally, applying a discrete Gronwall lemma 
  (Lemma~\ref{lem:discreteGronwall}) completes the proof.
\end{proof}


The second ingredient in the error analysis is an estimate of the local
residual of the exact solution. For its formulation we need to represent the
exact solution by a grid function. This is easily achieved by restricting $u$
to the temporal points $t_n = n k$, $n \in \{0,\ldots,N\}$, with $k =
\frac{T}{N}$ and $N \in \N$. More precisely, we define the \emph{restriction}
$u|_N$ of $u$ to the grid points $(t_n)_{n \in \{0,\ldots,N\}}$ by  
\begin{align}
  \label{eq4:restriction}
  [u|_N]^n := u(t_n) 
\end{align}
for all $n \in \{0,\ldots,N\}$. Since $u|_N$ is deterministic
we clearly have $[u|_N]^n = u(t_n) \in
L^2(\Omega,\F_n, \P;\R^d)$ for all $n \in \{0,\ldots,N\}$.
In addition, as in \eqref{eq3:growth_f} we have
\begin{align*}
  \| f( \tn_n, u(t_n) ) \|_{L^2(\Omega;\R^d)}
  &\le \big\| L_{K_u}(\tn_n) | u(t_{n}) |  + g( \tn_n ) 
  \big\|_{L^2(\Omega;\R)} \\
  & \le \frac{1}{\sqrt{k}} \big( 1 + \|u \|_{C([0,T];\R^d)} \big)  \big\| 
  L_{K_u} + g
  \big\|_{L^2(t_{n-1},t_n;\R)} < \infty.
\end{align*} 
This shows that $u|_N \in \mathcal{G}^2_N$ for every $N \in \N$.

\begin{lemma}
  \label{lem:cons}
  Let Assumption~\ref{as:ODEf} be satisfied. Then, for all $N \in \N$ and 
  $n \in \{1,\ldots, N\}$ the local residual \eqref{eq:locresidual} of the
  exact solution $u$ to the initial value problem \eqref{eq3:ODE} is bounded
  by 
  \begin{align}
    \label{eq4:cons1}
    \begin{split}
      \big\| \rho_N^n(u|_N) \big\|_{L^2(\Omega;\R^d)} 
      &\le \big( 1 + \|u \|_{C([0,T];\R^d)} \big) 
      \Big(1 + T^{\frac{1}{2}} \big\| L_{K_u} + g \big\|_{L^2(0,T;\R)} \Big)
      \\
      &\quad \times \Big( \big\| g \big\|_{L^2(t_{n-1},t_n;\R)} + \big\|
      L_{K_u} \big\|_{L^2(t_{n-1},t_n;\R)} \Big) k^{\frac{1}{2}}
    \end{split}
  \end{align}
  and 
  \begin{align}
    \label{eq4:cons2}
    \begin{split}
    &\big\| \E \big[ \rho_N^n(u|_N)| \F_{n-1} \big] \big\|_{L^2(\Omega;\R^d)}\\
    &\quad \le \big( 1 + \| u \|_{C([0,T];\R^d)} \big)
    \big\| L_{K_u} + g \big\|_{L^2(0,T;\R)} \big\| L_{K_u}
    \big\|_{L^2(t_{n-1},t_n;\R)} k.
    \end{split}
  \end{align}
\end{lemma}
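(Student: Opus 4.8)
The plan is to split the residual of the exact solution into a ``state increment'' term and the quadrature error of a randomized Riemann sum, and to treat the two separately. Since $\tn_n\in[t_{n-1},t_n]$ almost surely we trivially have $k\,f(\tn_n,u(\tn_n)) = \int_{t_{n-1}}^{t_n} f(\tn_n,u(\tn_n))\diff{s}$, and the integral form \eqref{eq3:sol} of the equation gives $u(t_n)-u(t_{n-1}) = \int_{t_{n-1}}^{t_n} f(s,u(s))\diff{s}$. Inserting both identities into the definition \eqref{eq:locresidual} of $\rho_N^n(u|_N)$ yields, for every $n\in\{1,\dots,N\}$,
\begin{align*}
  \rho_N^n(u|_N) &= k\big( f(\tn_n,u(t_n)) - f(\tn_n,u(\tn_n)) \big) \\
  &\quad + \Big( k\,f(\tn_n,u(\tn_n)) - \int_{t_{n-1}}^{t_n} f(s,u(s))\diff{s} \Big).
\end{align*}
The first term is controlled by the local Lipschitz bound of Assumption~\ref{as:ODEf}~(iii) applied on the compact set $K_u$ (which is admissible since the whole trajectory of $u$ lies in $K_u$) together with the H\"older estimate \eqref{eq3:hoelder_u}; the second term is $k\,f(\tn_n,u(\tn_n))$ minus its mean $\int_{t_{n-1}}^{t_n} f(s,u(s))\diff{s} = u(t_n)-u(t_{n-1})$ and is handled through a second moment bound using the growth estimate \eqref{eq3:growth_f}. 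The latter relies on $f(\cdot,u(\cdot)) \in L^2(0,T;\R^d)$, which also follows from \eqref{eq3:growth_f}.

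For \eqref{eq4:cons1} I would bound both terms in $L^2(\Omega;\R^d)$. For the state increment term, Assumption~\ref{as:ODEf}~(iii) and \eqref{eq3:hoelder_u} give the pointwise bound $k\,|f(\tn_n,u(t_n)) - f(\tn_n,u(\tn_n))| \le k\,L_{K_u}(\tn_n)\,|u(t_n)-u(\tn_n)| \le k\,L_{K_u}(\tn_n)\,\big(1+\|u\|_{C([0,T];\R^d)}\big)\,\|L_{K_u}+g\|_{L^2(0,T;\R)}\,k^{1/2}$; taking the $L^2(\Omega)$-norm, using that $\tn_n$ is $\mathcal{U}(t_{n-1},t_n)$-distributed so that $\E\big[L_{K_u}(\tn_n)^2\big] = k^{-1}\|L_{K_u}\|_{L^2(t_{n-1},t_n;\R)}^2$, and then estimating one factor $k$ by $T^{1/2}k^{1/2}$, produces the contribution carrying $T^{1/2}\|L_{K_u}+g\|_{L^2(0,T;\R)}$. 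For the randomized Riemann sum term, its squared $L^2(\Omega)$-norm is at most its second moment $k^2\,\E\big[|f(\tn_n,u(\tn_n))|^2\big] = k\,\|f(\cdot,u(\cdot))\|_{L^2(t_{n-1},t_n;\R^d)}^2$, which by \eqref{eq3:growth_f} is at most $k\,\big(1+\|u\|_{C([0,T];\R^d)}\big)^2\,\|L_{K_u}+g\|_{L^2(t_{n-1},t_n;\R)}^2$. Adding both contributions and using $\|L_{K_u}+g\|_{L^2(t_{n-1},t_n;\R)} \le \|L_{K_u}\|_{L^2(t_{n-1},t_n;\R)} + \|g\|_{L^2(t_{n-1},t_n;\R)}$ yields \eqref{eq4:cons1}.

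For \eqref{eq4:cons2} the decisive point is that $\rho_N^n(u|_N)$ depends on $\omega$ only through $\tn_n$ (the vectors $u(t_n)$ and $u(t_{n-1})$ are deterministic), while $\sigma(\tn_n)$ is independent of $\F_{n-1}$ by the construction \eqref{eq3:filtration} of the filtration; hence $\E[\rho_N^n(u|_N)\mid\F_{n-1}] = \E[\rho_N^n(u|_N)]$ is a deterministic vector. Taking expectations in the decomposition above, the randomized Riemann sum term is \emph{unbiased}, since $\E\big[k\,f(\tn_n,u(\tn_n))\big] = \int_{t_{n-1}}^{t_n} f(s,u(s))\diff{s}$ by the uniform distribution of $\tn_n$, so it drops out and $\E[\rho_N^n(u|_N)] = k\,\E\big[f(\tn_n,u(t_n)) - f(\tn_n,u(\tn_n))\big]$. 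Bounding the modulus of this expectation by $k\,\E\big[L_{K_u}(\tn_n)\,|u(t_n)-u(\tn_n)|\big]$, inserting \eqref{eq3:hoelder_u}, and using $\E[L_{K_u}(\tn_n)] = k^{-1}\int_{t_{n-1}}^{t_n} L_{K_u}(s)\diff{s} \le k^{-1/2}\|L_{K_u}\|_{L^2(t_{n-1},t_n;\R)}$ by the Cauchy--Schwarz inequality, gives precisely \eqref{eq4:cons2}, now of order $k$ instead of $k^{1/2}$.

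Most of this is routine once the decomposition is set up; the only delicate bookkeeping is that Assumption~\ref{as:ODEf} is invoked at the random evaluation point $\tn_n$, which falls into the exceptional null set $\NN_f$ with probability zero. The conceptually essential step --- and the only place where the randomization really enters --- is the cancellation of the order-$k^{1/2}$ Riemann sum contribution upon conditioning on $\F_{n-1}$ in \eqref{eq4:cons2}; together with the weight $k^{-1}$ multiplying $\|\E[\rho_N^j(\,\cdot\,)\mid\F_{j-1}]\|^2$ in the stability estimate of Lemma~\ref{lem:stab}, this is what eventually yields the convergence rate $\tfrac12$.
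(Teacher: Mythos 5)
Your proposal is correct and follows essentially the same route as the paper's proof: the same decomposition of $\rho_N^n(u|_N)$ into a Lipschitz-difference term plus a centered randomized Riemann sum, the same use of independence of $\tn_n$ from $\F_{n-1}$ and unbiasedness for \eqref{eq4:cons2}, and the same second-moment bound for the Riemann sum term in \eqref{eq4:cons1}. The only cosmetic differences are that you estimate $\|f(\cdot,u(\cdot))\|_{L^2}$ directly via \eqref{eq3:growth_f} rather than splitting off $f(\tn_n,0)$ and that you apply Cauchy--Schwarz to $\E[L_{K_u}(\tn_n)]$ after pulling out the deterministic H\"older bound instead of using $L^2$--$L^2$ Cauchy--Schwarz on the full expectation; both yield the stated constants.
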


\begin{proof}
  Fix $N \in \N$ and $n \in \{1,\ldots,N\}$ arbitrarily. First recall that
  \begin{align*}
    \rho_N^n(u|_N) = k f(\tn_n, u(t_n) ) - u(t_n) + u(t_{n-1}).
  \end{align*}
  Inserting \eqref{eq3:sol} yields
  \begin{align}
    \label{eq4:rep_res}
    \begin{split}
      \rho_N^n(u|_N) &= k \big( 
      f(\tn_n, u(t_n) ) -  f(\tn_n, u(\tn_n) ) \big) + k f(\tn_n, u(\tn_n) ) - 
      \int_{t_{n-1}}^{t_n} f(s,u(s)) \diff{s}.
    \end{split}
  \end{align}
  Since $f$ and $u$ are deterministic, the only source of randomness in this
  expression is the random variable $\tn_n$. Further, since $\tn_n$ is
  independent of $\F_{n-1}$ we obtain
  \begin{align*}
    \E \big[ \rho_N^n(u|_N) | \F_{n -1} \big] &= \E \big[ \rho_N^n(u|_N) 
    \big]
    = k \E \big[ f(\tn_n, u(t_n) ) -  f(\tn_n,
    u(\tn_n) ) \big],
  \end{align*}
  where we also used that
  \begin{align}
    \label{eq4:mean}
     k \E \big[ f(\tn_n, u(\tn_n) ) \big] = \int_{t_{n-1}}^{t_n} f(s,u(s)) 
     \diff{s}.
  \end{align}
  Since $\P( \tn_n \in \NN_f)= 0$ we can apply 
  Assumption~\ref{as:ODEf} (iii) with the compact
  set $K = K_u \subset \R^d$ defined in \eqref{eq3:Ku} inside the expectation.
  This yields
  \begin{align*}
    \big\| \E \big[ \rho_N^n(u|_N) | \F_{n -1} \big] 
    \big\|_{L^2(\Omega;\R^d)}
    &= \big| \E \big[ \rho_N^n(u|_N) \big] \big| \\
    &\le k \E \big[ | f(\tn_n, u(t_n) ) -  f(\tn_n,
    u(\tn_n) ) | \big] \\
    &\le k \E \big[ L_{K_u}(\tn_n) \, | u(t_n) - u(\tn_n) | \big]\\
    &\le k \big( \E \big[ L_{K_u}(\tn_n)^2 \big]
    \big)^{\frac{1}{2}} \big\| u(t_n) - u(\tn_n) \big\|_{L^2(\Omega;\R^d)}.
  \end{align*}
  Then, we make use of the H\"older continuity \eqref{eq3:hoelder_u} 
  of $u$ and obtain
  \begin{align*}
    \big\| u(t_n) - u(\tn_n) \big\|_{L^2(\Omega;\R^d)} 
    \le  \big( 1 + \| u \|_{C([0,T];\R^d)} \big) \big\| L_{K_u} 
    + g \big\|_{L^2(0,T;\R)} k^{\frac{1}{2}} .
  \end{align*}
  In addition, we note that
  \begin{align*}
    k^{\frac{1}{2}} \big( \E \big[ L_{K_u}(\tn_n)^2 \big]
    \big)^{\frac{1}{2}} 
    = \Big( \int_{t_{n-1}}^{t_n} L_{K_u}(s)^2
    \diff{s} \Big)^\frac{1}{2}.
  \end{align*}
  Hence,
  \begin{align*}
    &\big\| \E \big[ \rho_N^n(u|_N) | \F_{n -1} \big] 
    \big\|_{L^2(\Omega;\R^d)}\\
    &\quad \le \big( 1 + \| u \|_{C([0,T];\R^d)} \big)
    \big\| L_{K_u} + g \big\|_{L^2(0,T;\R)}
    \Big( \int_{t_{n-1}}^{t_n} L_{K_u}(s)^2 \diff{s} \Big)^{\frac{1}{2}} k,
  \end{align*}
  which proves assertion \eqref{eq4:cons2}.

  It remains to show \eqref{eq4:cons1}. To this end, we directly apply the
  $L^2( \Omega;\R^d)$-norm to \eqref{eq4:rep_res} and obtain
  \begin{align}
    \label{eq4:term1}
    \begin{split}
      \big\| \rho_N^n(u|_N) \big\|_{L^2(\Omega;\R^d)}
      &\le k \big\| f(\tn_n, u(t_n) ) -  f(\tn_n, u(\tn_n) ) 
      \big\|_{L^2(\Omega;\R^d)}\\
      &\qquad + \Big\| k f(\tn_n, u(\tn_n) ) - \int_{t_{n-1}}^{t_n} f(s,u(s)) 
      \diff{s} 
      \Big\|_{L^2(\Omega;\R^d)}.
    \end{split}
  \end{align}
  By similar arguments as above we derive the following estimate
  for the first term:
  \begin{align*}
    &k \big\| f(\tn_n, u(t_n) ) -  f(\tn_n,
    u(\tn_n) ) \big\|_{L^2(\Omega;\R^d)}\\
    &\quad \le k \big( \E \big[ L_{K_u}(\tn_n)^2 
    | u(t_n) - u(\tn_n) |^2 \big] \big)^{\frac{1}{2}}\\
    &\quad \le k^{\frac{1}{2}} T^{\frac{1}{2}}
    \big( 1 + \| u \|_{C([0,T];\R^d)} \big)
    \big\| L_{K_u} + g \big\|_{L^2(0,T;\R)}
    \Big( \int_{t_{n-1}}^{t_n} L_{K_u}(s)^2 \diff{s} \Big)^{\frac{1}{2}},
  \end{align*}
  where we also made use of the estimate $k \le T$ in the last step.

  Regarding the second summand in \eqref{eq4:term1} we first observe that
  \begin{align*}
    &\Big\| k f(\tn_n, u(\tn_n) ) - \int_{t_{n-1}}^{t_n} f(s,u(s)) \diff{s} 
    \Big\|_{L^2(\Omega;\R^d)}^2\\
    & \quad = \Big\| k f(\tn_n, u(\tn_n) )\Big\|_{L^2(\Omega;\R^d)}^2 
	    - 2 \E \Big[ \Big(k f(\tn_n, u(\tn_n) ) , \int_{t_{n-1}}^{t_n} f(s,u(s)) 
	    \diff{s}\Big) \Big]\\
	  & \qquad + \Big| \int_{t_{n-1}}^{t_n} f(s,u(s)) \diff{s} 
	  \Big|^2\\
	  & \quad = \Big\| k f(\tn_n, u(\tn_n) )\Big\|_{L^2(\Omega;\R^d)}^2 
	  - \Big| \int_{t_{n-1}}^{t_n} f(s,u(s)) \diff{s} 
	  \Big|^2\\
    & \quad \le \big\| k f(\tn_n, u(\tn_n) ) \big\|_{L^2(\Omega;\R^d)}^2,
  \end{align*}
  due to \eqref{eq4:mean}. Moreover, since $0 \in K_u \subset \R^d$ we derive
  from Assumption~\ref{as:ODEf} (ii) and (iii) that
  \begin{align*}
    &\big\| k f(\tn_n, u(\tn_n) ) 
    \big\|_{L^2(\Omega;\R^d)} \\
    &\quad \le k \big\| f(\tn_n, 0 ) \big\|_{L^2(\Omega;\R^d)} 
    + k \big\| f(\tn_n, u(\tn_n) ) - f(\tn_n, 0 ) 
    \big\|_{L^2(\Omega;\R^d)}\\
    &\quad \le k \big( \E \big[ | g( \tn_n ) |^2 \big]
    \big)^{\frac{1}{2}} + k \big( \E \big[
    L_{K_u}(\tn_n )^2 | u( \tn_n ) |^2 \big]
    \big)^{\frac{1}{2}}\\
    &\quad \le k^{\frac{1}{2}} \Big( \int_{t_{n-1}}^{t_n} g(s)^2 \diff{s}
    \Big)^{\frac{1}{2}} 
    + k^{\frac{1}{2}} \| u \|_{C([0,T];\R^d)} \Big( \int_{t_{n-1}}^{t_n} 
    L_{K_u}(s)^2
    \diff{s} \Big)^{\frac{1}{2}}. 
  \end{align*}
  In summary, we have shown that
  \begin{align*}
    \big\| \rho_N^n(u|_N) \big\|_{L^2(\Omega;\R^d)}
    &\le  \big( 1 + \|u \|_{C([0,T];\R^d)} \big) \Big(1 + 
    T^{\frac{1}{2}} \big\| L_{K_u} + g  \big\|_{L^2(0,T;\R)} \Big)\\
    &\quad \times \Big( \big\| g \big\|_{L^2(t_{n-1},t_n;\R)} + \big\| L_{K_u}
    \big\|_{L^2(t_{n-1},t_n;\R)} \Big) k^{\frac{1}{2}}.
  \end{align*}  
  This completes the proof of \eqref{eq4:cons1}.
\end{proof}


We are now well-prepared to state and prove the main result of this section.

\begin{theorem}
  \label{th:convRandEuler}
  Let Assumption~\ref{as:ODEf} be satisfied. For $N \in \N$ 
  let $(U^n)_{n \in \{0,\ldots,N\}} \in \mathcal{G}^2_N$ be the grid function
  generated by the randomized backward Euler method \eqref{eq4:RandBackEuler}
  with step size $k = \frac{T}{N}$. If $\nu k < \frac{1}{4}$, then
  there exists a constant $C$ independent of $N$ and $k$ such that
  \begin{align}
    \label{eq4:errODE}
    \max_{n \in \{0,\ldots,N\}} \big\| U^n - u(t_n) 
    \big\|_{L^2(\Omega;\R^d)}   
    \le C k^{\frac{1}{2}}.
  \end{align}
\end{theorem}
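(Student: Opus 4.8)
The plan is to combine the stability estimate of Lemma~\ref{lem:stab} with the consistency bounds of Lemma~\ref{lem:cons}, applied to the grid function $V = u|_N$ obtained by restricting the exact solution to the temporal grid. Recall that $u|_N \in \mathcal{G}^2_N$ has already been verified and that $\|u\|_{C([0,T];\R^d)} < \infty$ by the a priori bound \eqref{eq3:growth_u}. Since $\nu k < \frac14$ by hypothesis, Lemma~\ref{lem:stab} applies with this choice of $V$, and because $U^0 = u_0 = [u|_N]^0$ the term $|U^0 - V^0|^2$ vanishes. Hence for every $n \in \{1,\ldots,N\}$,
\begin{align*}
  \big\| U^n - u(t_n) \big\|_{L^2(\Omega;\R^d)}
  \le \ee^{(2\nu+1)T} \bigg( \sum_{j=1}^n \Big( 2 \big\| \rho_N^j(u|_N) \big\|^2_{L^2(\Omega;\R^d)} + \tfrac{2}{k} \big\| \E[\rho_N^j(u|_N)|\F_{j-1}] \big\|^2_{L^2(\Omega;\R^d)} \Big) \bigg)^{\frac12},
\end{align*}
where we also used $\ee^{(2\nu+1)t_n} \le \ee^{(2\nu+1)T}$.

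Next I would insert the estimates \eqref{eq4:cons1} and \eqref{eq4:cons2}. Squaring \eqref{eq4:cons1} and using $(a+b)^2 \le 2a^2 + 2b^2$ gives
\begin{align*}
  \big\| \rho_N^j(u|_N) \big\|^2_{L^2(\Omega;\R^d)} \le C_1 \big( \big\| g \big\|^2_{L^2(t_{j-1},t_j;\R)} + \big\| L_{K_u} \big\|^2_{L^2(t_{j-1},t_j;\R)} \big)\, k,
\end{align*}
while squaring \eqref{eq4:cons2} and dividing by $k$ yields
\begin{align*}
  \tfrac{1}{k}\big\| \E[\rho_N^j(u|_N)|\F_{j-1}] \big\|^2_{L^2(\Omega;\R^d)} \le C_2 \big\| L_{K_u} \big\|^2_{L^2(t_{j-1},t_j;\R)}\, k,
\end{align*}
where $C_1$ and $C_2$ depend only on $T$, $\|u\|_{C([0,T];\R^d)}$ and $\|L_{K_u}+g\|_{L^2(0,T;\R)}$, hence not on $N$ or $k$.

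The point that keeps the resulting constant independent of $N$ is the additivity of the squared $L^2$-norm over the partition: $\sum_{j=1}^n \|g\|^2_{L^2(t_{j-1},t_j;\R)} = \|g\|^2_{L^2(0,t_n;\R)} \le \|g\|^2_{L^2(0,T;\R)}$, and likewise for $L_{K_u}$. Summing the two displayed bounds over $j \in \{1,\ldots,n\}$ therefore gives $\sum_{j=1}^n(\cdots) \le C_3\, k$ with $C_3$ independent of $N$ and $k$. Substituting this into the stability estimate yields $\|U^n - u(t_n)\|_{L^2(\Omega;\R^d)} \le \ee^{(2\nu+1)T}(C_3 k)^{1/2} =: C k^{1/2}$, uniformly in $n \in \{1,\ldots,N\}$; the case $n=0$ is trivial since $U^0 = u(t_0)$, so taking the maximum over $n$ completes the proof.

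I do not expect a serious obstacle, as the analytical substance resides in Lemmas~\ref{lem:stab} and~\ref{lem:cons}. The only step requiring care is the bookkeeping of constants — specifically, exploiting the telescoping of the integrals $\int_{t_{j-1}}^{t_j}$ rather than bounding each local term by the global norm, which would introduce a spurious factor of $n \sim k^{-1}$ and destroy the rate $k^{1/2}$.
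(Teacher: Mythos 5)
Your proposal is correct and follows essentially the same path as the paper's proof: apply the stability estimate of Lemma~\ref{lem:stab} with $V = u|_N$, note that the initial error vanishes, insert the consistency bounds of Lemma~\ref{lem:cons}, and exploit the telescoping identity $\sum_{j=1}^n \|\cdot\|^2_{L^2(t_{j-1},t_j;\R)} = \|\cdot\|^2_{L^2(0,t_n;\R)}$ so that the resulting constant is independent of $N$ and $k$. Your remark about why one must sum the squared local $L^2$-norms rather than crudely bounding each by the global norm identifies precisely the point that makes the bookkeeping work.
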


\begin{proof}
  Let us fix an arbitrary $N \in \N$ such that $\nu k < \frac{1}{4}$.
  First note that the sequence $(U^n)_{n \in \{0,\ldots,N\}}
  \in \mathcal{G}^2_N$ is well-defined by Lemma~\ref{lem:exist}. Furthermore, 
  as we already discussed above, the restriction $u|_N$ defined in 
  \eqref{eq4:restriction} is also an element of
  $\mathcal{G}^2_N$. Hence, Lemma~\ref{lem:stab} is applicable with $V = u|_N$.
  Using that $U^0 = u_0 = u(t_0)$ we therefore obtain
  \begin{align*}
    &\big\| U^n - u(t_n) \big\|^2_{L^2(\Omega;\R^d)}\\ 
    &\quad\le \ee^{2(2 \nu + 1)t_n}
    \sum_{j = 1}^n \Big( 2 \big\| \rho^j_N(u|_N) 
    \big\|^2_{L^2(\Omega;\R^d)}
    + \frac{2}{k}  \big\| \E \big[ \rho^j_N(u|_N) \, | \, \F_{j-1} \big] 
    \big\|^2_{L^2(\Omega;\R^d)} \Big)
  \end{align*}
  for every $n \in \N$. After taking the maximum over $n \in \{0, \ldots, N\}$
  it remains to estimate the two sums over the local residuals of the exact
  solution. From Lemma~\ref{lem:cons} we get
  \begin{align*}
    2 \sum_{j = 1}^N \big\| \rho^j_N(u|_N) \big\|^2_{L^2(\Omega;\R^d)} 
    &\le C_1 k \sum_{j = 1}^N \big( \| g \|_{L^2(t_{j-1}, t_j; \R)} +
    \| L_{K_u} \|_{L^2(t_{j-1}, t_j; \R)} \big)^2\\
    &\le 2 C_1 k \sum_{j = 1}^N \big( \| g \|_{L^2(t_{j-1}, t_j; \R)}^2 +
    \| L_{K_u} \|_{L^2(t_{j-1}, t_j; \R)}^2 \big)\\
    &= 2 C_1 \big( \| g \|_{L^2(0,T; \R)}^2 +
    \| L_{K_u} \|_{L^2(0, T; \R)}^2 \big) k,
  \end{align*}
  where the constant $C_1$ is given by
  \begin{align*}
    C_1 = 2 \big( 1 + \|u \|_{C([0,T];\R^d)} \big)^2 \Big(1 + T^{\frac{1}{2}}
    \big\| L_{K_u} + g \big\|_{L^2(0,T;\R)} \Big)^2.
  \end{align*}
  In addition, Lemma~\ref{lem:cons} also yields
  \begin{align*}
    &\frac{2}{k} \sum_{j = 1}^n \big\| \E \big[ \rho^j_N(u|_N) \, | \, \F_{j-1}
    \big] \big\|^2_{L^2(\Omega;\R^d)} \\
    &\quad \le C_2 k \sum_{j = 1}^N \| L_{K_u} \|_{L^2(t_{j-1}, t_j;
    \R)}^2 = C_2 \| L_{K_u} \|_{L^2(0, T; \R)}^2 k,
  \end{align*}
  with 
  \begin{align*}
    C_2 = 2 \big( 1 + \|u \|_{C([0,T];\R^d)} \big)^2
    \big\| L_{K_u} + g \big\|_{L^2(0,T;\R)}^2.
  \end{align*}
  Altogether, this proves \eqref{eq4:errODE} with
  \begin{align*}
    C = \ee^{(2 \nu + 1)T} \sqrt{\max(2 C_1, C_2)} \Big( \| g \|_{L^2(0,T;
    \R)} + \| L_{K_u} \|_{L^2(0, T; \R)} \Big).
  \end{align*}
\end{proof}


\section{Numerical experiments for ODEs}
\label{sec:numexpODE}

A simple, yet useful problem to demonstrate the usability of the randomized
backward Euler method \eqref{eq4:RandBackEuler} is the Prothero--Robinson
example from \cite{prothero1974}, see also \cite[Sec.~IV.15]{hairer2010}, 
which is given by
\begin{align}
	\begin{split}
    \label{eq:ProtheroRobinson}
    \begin{cases}
      \dot{u}(t) = \lambda (u(t) - g(t)) + \dot{g}(t), \quad \text{ for almost
      all } t \in (0,T],\\
      u(0) = g(0),
    \end{cases}
	\end{split}
\end{align}
for $\lambda \in \R$ and $g \in H^{1}(0,T)$. Here $H^1(0,T)$ denotes the
standard Sobolev space of square integrable and weakly differentiable
functions.
It is easy to verify that $u = g$ 
is a solution to \eqref{eq:ProtheroRobinson} in the sense of Carath\'{e}odory. 
The right-hand side $f
\colon [0,T] \times \R \to \R$ is given by
\begin{align*}
	f(t,x) := \lambda (x - g(t)) + \dot{g}(t), \quad t \in [0,T], \; x \in \R,
\end{align*}
which fulfills Assumption~\ref{as:ODEf}, as can easily be shown.

For a numerical example, we choose $T = 1$ and
a function $g$ which is oscillating with a 
period $2 p$, for $p = 2^{-K}$, $K\in \N$. To this end, we 
use a continuous, piecewise linear function $g$. This function is chosen 
such that it fulfills
\begin{align*}
	g(i p) = 
	\begin{cases}
		p, \quad &\text{ for } i \in \{ 0,\dots, 2^K \} \text{ odd},\\
		0, \quad &\text{ for } i \in \{ 0,\dots, 2^K \} \text{ even},
	\end{cases}
\end{align*}
and the affine linear interpolation of these values
for all other $t \in [0,T]$.
Further, the function $g$ has a weak derivative in $L^2(0,1)$. 
For the implementation we take the following 
representation for $\dot{g}$ given by 
\begin{align*}
  \dot{g}(t) = 
	\begin{cases}
		-1, \quad &\text{ for } t \in [ip,(i+1) p ), \ i\in \{ 0,\dots, 2^K-1 \} 
		\text{ 
		odd,}\\
		1, \quad &\text{ for } t \in [ip,(i+1) p ),  \ i\in \{ 0,\dots, 2^K-1 \} 
		\text{ even}.
	\end{cases}
\end{align*}

For every equidistant step size $k = 2^{-n} > p $, $n \in \N$
with $n < K$, the classical backward Euler method only 
evaluates the mapping $g$ in the grid points, where $g$ is equal to zero and
where the chosen representation of $\dot{g}$ is equal to $1$. Therefore, for
all such step sizes, the classical backward Euler method cannot distinguish
between the problem \eqref{eq:ProtheroRobinson} and the initial value problem
\begin{align*}
  \begin{cases}
      \dot{v}(t) = \lambda v(t) + 1, \quad \text{ for all } t \in (0,T],\\
      v(0) = g(0).
  \end{cases}
\end{align*}
Since $u = g \neq v$ it is not surprising that the classical backward Euler
method does not yield a good approximation of the correct solution. 
Only for $k<p$ it becomes visible that the classical 
backward Euler method converges to the exact solution $u = g$.

On the other hand, the randomized scheme \eqref{eq4:RandBackEuler}
is not so easily ``fooled'' by the
highly oscillating function $g$. It already yields more reliable results for
step sizes $k > p $, since it evaluates $g$ and $\dot{g}$ not only in extremal
points. In Figure~\ref{fig1} we indeed see that the error of the randomized 
scheme \eqref{eq4:RandBackEuler} measured in the $L^2(\Omega, \R)$-norm is
significantly smaller than that of the classical backward Euler method. 

Obviously, a simple way to correct the backward Euler method would be to 
choose a different temporal grid. For instance, one might use a non-equidistant
partition of $[0,T]$ or an adaptive version of the backward Euler method.
However, no matter what deterministic strategy is used, it is always possible
to construct a similar ``fooling'' function $f \colon [0,T] \times \R \to \R$ 
that satisfies Assumption~\ref{as:ODEf} and deceives the deterministic 
algorithm to approximate the wrong initial value problem for all 
computationally feasible numbers of function evaluations.

A further interesting aspect of problem \eqref{eq:ProtheroRobinson} is the 
fact that for $ \lambda < 0 $ it has a dissipative structure, i.e., there 
exists $\nu \in [0, \infty)$ such that 
\begin{align*}
\big( f(t, x) - f(t,y), x -y \big) \le - \nu | x - y |^2,
\end{align*}
holds for all $x, y \in \R$ and $t \in [0,1]$. 
It is well-known, see the discussions in \cite{hairer2010},
that this structure of the problem can be exploited more
efficiently with an implicit scheme in comparison to explicit Runge--Kutta
methods. Here, we will compare the randomized backward Euler method
\eqref{eq4:RandBackEuler} with its explicit randomized counterpart
\begin{align}
	\label{eq7:RandForwardEuler}
		\begin{split}
			\begin{cases}
				U^n = U^{n-1} + k f( \tn_{n}, U^{n-1} ),\quad \text{ for } 
				n \in \{ 1,\ldots,N \},\\ 
				U^0 = u_0,				
			\end{cases}
		\end{split}
\end{align}
which has been studied in \cite{daun2011, heinrich2008, jentzen2009d,
kruse2017a}. In this particular example, we obtain the scheme
\begin{align*}
	\begin{cases}
		U^n = (1 + k \lambda )U^{n-1} - k \lambda g(\tn_{n})  + k 
		\dot{g}(\tn_{n}),\quad \text{ for } 
		n \in \{ 1,\ldots,N \},\\ 
		U^0 = u_0.
	\end{cases}
\end{align*}
This will lead to an oscillating numerical solution with a high amplitude if 
$|1+ k \lambda| >1$ holds true. For $\lambda < 0$ this is the case if $k < 
-\frac{2}{\lambda}$. 

\begin{figure}[t] 
	\centering
	\includegraphics[width=0.49\textwidth]{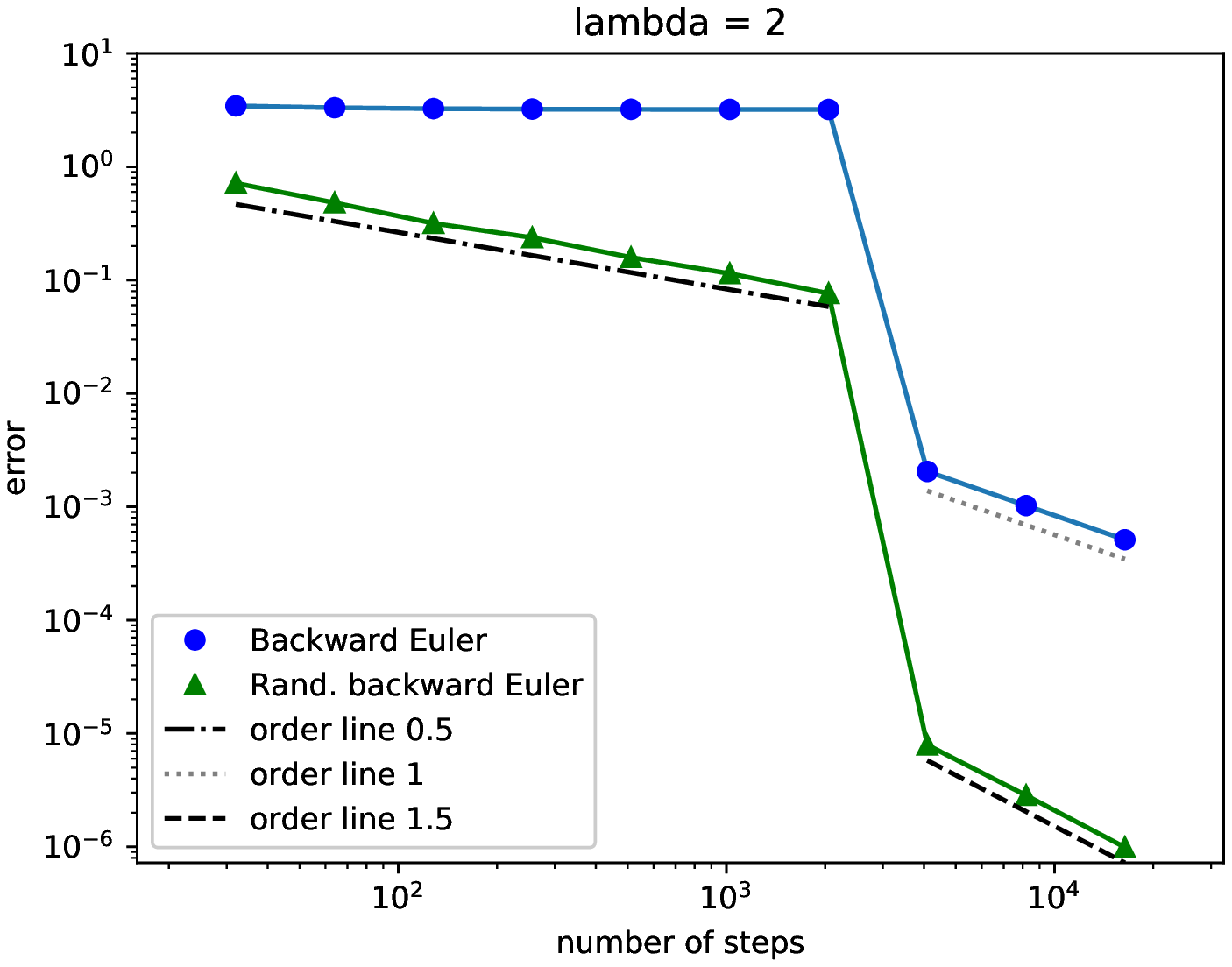}
	\includegraphics[width=0.49\textwidth]{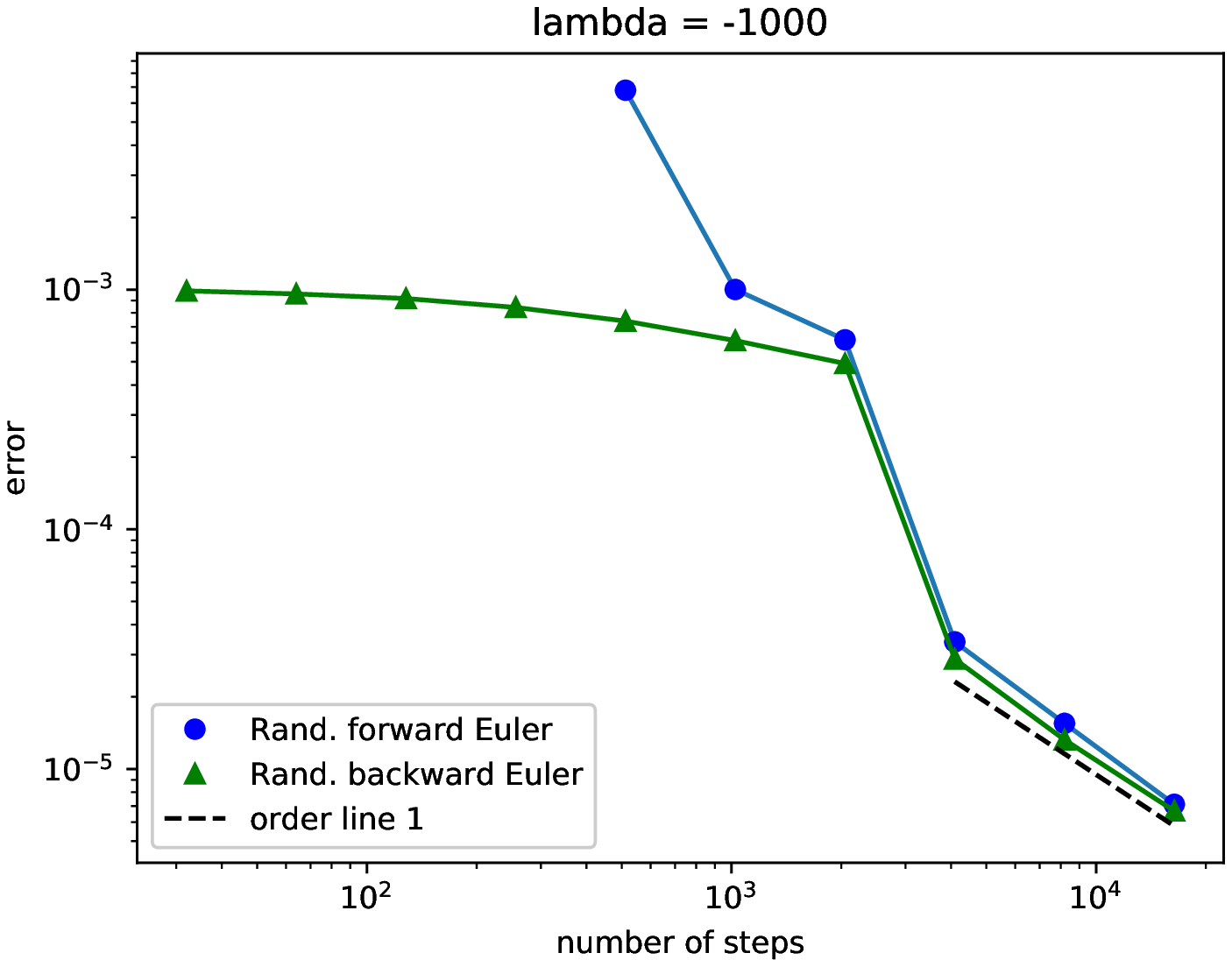}
	\caption{Left: $L^2$-convergence of the 
  classical backward Euler method and scheme \eqref{eq4:RandBackEuler} to the
  IVP \eqref{eq:ProtheroRobinson} with $\lambda =2$.
  Right: $L^2$-convergence of schemes \eqref{eq4:RandBackEuler} and
	\eqref{eq7:RandForwardEuler} to \eqref{eq:ProtheroRobinson} with $\lambda 
		=-1000$.}
	\label{fig1}
\end{figure}

In the numerical examples that lead to Figure~\ref{fig1}, we considered the 
value $p = 2^{-12}$ and step sizes $k = 2^{-n}$ for $n \in \{5, \dots, 14 
\}$. To evaluate the $L^2(\Omega;\R)$-norm we considered $1000$ Monte Carlo 
iterations. 
In the plot on the left hand side we used the value $\lambda = 2$ and compared 
the classical backward Euler method with scheme \eqref{eq4:RandBackEuler}. 
As we expected from the discussion above, two different phases of the example
become well visible. For $n \in \{5, \dots, 11 \}$ the classical 
backward Euler method does not offer an accurate numerical solution. The error
of the randomized backward Euler method decreases with a rate of  
approximately $0.5$. When $n$ changes from $11$ to $12$ both schemes improve 
drastically since they are now able to fully resolve the oscillations of the
solution. In the last part, for $n \in \{12, 13, 14 \}$ the errors of both
schemes decrease with a larger rate. 
Also here, the randomized scheme appears to have a higher rate of convergence, 
$1.5$, than the classical scheme which converges with rate $1$.  
Note that the rate of $1.5$ is in line with those of randomized quadrature 
rules, see \cite{kruse2017a}.

In the plot on the right-hand side in Figure~\ref{fig1}, we considered the 
case
$\lambda =  -1000$ and compared the randomized backward Euler method 
\eqref{eq4:RandBackEuler} with the randomized forward Euler method 
\eqref{eq7:RandForwardEuler}. 
Here, we only plotted errors smaller than $1$, since the explicit scheme 
produces strongly oscillating numerical solutions with a very large amplitude
for step sizes which are not small enough. The first occurring error of the
scheme \eqref{eq7:RandForwardEuler} in the plot  
appears for $2^9 = 512$ temporal steps. This was expected since 
the explicit scheme only leads to a non-exploding solution for step sizes $k$
with $|1 + k \lambda| < 1$.

To sum up, the numerical experiments in this section indicate that the
randomized backward Euler method is especially advantageous compared to
deterministic methods if the problem has very irregular coefficients.
Compared to explicit randomized Runge--Kutta methods such as
\eqref{eq7:RandForwardEuler} we also obtain more reliable results for rather
large step sizes when considering problems with a dissipative structure. Both 
points qualify the scheme \eqref{eq4:RandBackEuler} for
the numerical treatment of monotone evolution equations with time-irregular
coefficients. This will be studied in more details in the following sections.


\section{A non-autonomous nonlinear evolution equation with time-irregular 
coefficients}
\label{sec:nonlinPDE}

In this section, we now turn our attention to the second class of initial value
problems we consider in this paper. More precisely, we are interested in 
non-autonomous and possibly nonlinear evolution equations of the form
\begin{align} 
  \label{eq6:AbstractProb}
  \begin{split}
    \begin{cases}
      \dot{u}(t) + \A(t) u(t) = f(t), \quad \text{for almost all } t \in 
      (0,T],\\
      u(0) = u_0.    
    \end{cases}
  \end{split}
\end{align}
In order to make this rather abstract setting 
more precise, we start by introducing the
real, separable Hilbert spaces $(V,\ska[V]{\cdot}{\cdot}, \|\cdot\|_V)$ and 
$(H,\ska[H]{\cdot}{\cdot}, \|\cdot\|_H)$. Here, we assume that the space $V$ 
is densely embedded in the space $H$. Thus, we obtain the Gelfand triple
\begin{align*}
  V \stackrel{d}{\hookrightarrow} H \cong H^\ast 
  \stackrel{d}{\hookrightarrow} V^\ast,
\end{align*}
where $H^*$ and $V^*$ are the dual spaces of $H$ and $V$, respectively. These 
spaces are equipped with the induced dual norms. 

We impose the following conditions on $\A$. Note that, as it is customary, we
usually write $\A(t)v$ instead of $\A(t,v)$. 

\begin{assumption}
  \label{as:nonlinPDE}
  The mapping $\A \colon [0,T] \times V \to V^\ast$
  fulfills the conditions:
  \begin{enumerate}[label=(\roman*)]
    \item For every $v_1, v_2 \in V$ the mapping $[0,T] \ni t \mapsto 
    \inner[V^*,V]{\A(t) v_1}{v_2}$ is measurable.
    \item \label{item:Abounded} There exists a constant $M\geq 0$ such that 
    $\| \A(t) 0 \|_{V^*} 
      \leq M$ for every $t\in [0,T]$.
    \item \label{item:Alip}
    There exists $L \in (0,\infty)$ such that for all $t \in [0,T]$
      it holds true that
      \begin{align*}
        \| \A(t) v_1 - \A(t) v_2 \|_{V^\ast} \le L \| v_1 - v_2 \|_V, \quad
        \text{ for all } v_1, v_2 \in V.
      \end{align*}
    \item \label{item:Amon}
    There exists $\mu \in (0,\infty)$ such that for all $t \in
      [0,T]$ it holds true that
      \begin{align*}
        \inner[V^\ast,V]{\A(t) v_1 - \A(t) v_2 }{v_1 - v_2} \ge \mu \|v_1 -
        v_2\|^2_V, \quad \text{ for all } v_1, v_2 \in V.         
      \end{align*}
  \end{enumerate}
\end{assumption}

\begin{remark}\label{rem:Gaarding}
  Instead of Assumption~\ref{as:nonlinPDE}~\ref{item:Amon} we can ask for 
  the weaker condition
  \begin{enumerate}[label=(\roman*)]
    \item[$(iv')$]
    There exist $\mu \in (0,\infty)$ and $\kappa \in [0,\infty)$ such that for all 
    $t \in [0,T]$ it holds true that
    \begin{align*}
      \inner[V^\ast,V]{\A(t) v_1 - \A(t) v_2 }{v_1 - v_2} 
      \ge 
      \mu \|v_1 - v_2\|^2_V - \kappa \|v_1 - v_2\|^2_H ,
    \end{align*}
    for all $v_1, v_2 \in V$.
  \end{enumerate}
  Using this G{\aa}rding-type inequality, the following proofs can be done in 
  an analogous manner with a further application of Gronwall's inequality. 
  This additional argument leads to a constant $C$ in 
  Theorem~\ref{th6:conv} below that grows exponentially in time. For 
  simplicity we will only treat the case $\kappa = 0$ in the following. 
\end{remark}

Before we analyze the convergence of the numerical  
scheme \eqref{eq6:parabolicScheme} defined below, let us recall the existence
of a unique solution to the abstract problem \eqref{eq6:AbstractProb}.
We will consider the concept of weak solutions for abstract non-autonomous 
problems of the form \eqref{eq6:AbstractProb}, i.e., we call a function 
\begin{align*}
  u \in \mathcal{W}(0,T) = \big\{ v \in L^2(0,T;V) \, : \,\dot{v} \in 
  L^2(0,T;V^*) 
  \big\} 
\end{align*}
a \emph{weak solution} to \eqref{eq6:AbstractProb} if $u(0)=u_0$ is fulfilled
and if the integral equality
\begin{align}
  \label{eq6:weaksol}
  \int_{0}^{T} \inner[V^*,V]{\dot{u}(t) + \A(t) u(t)}{v(t)} \diff{t} = 
  \int_{0}^{T} \inner[V^*,V]{f(t)}{v(t)} \diff{t}
\end{align}
is satisfied for every $v\in L^2(0,T;V)$. Note that evaluating
the abstract function $u$ at the initial time is well defined since
the space $\mathcal{W}(0,T)$ is embedded in the space $C([0,T];H)$. An
introduction to this concept of solutions can be found in, for example,
\cite{emmrich2004}, \cite{evans1998} or \cite{roubicek.2013}.

\begin{prop} \label{prop:exPDE}
  Let Assumption~\ref{as:nonlinPDE} be satisfied. Then for every
  given $f\in L^2(0,T;H)$ and initial value $u_0\in H$ there
  exists a unique weak solution $u \in \mathcal{W}(0,T)$ to the problem 
  \eqref{eq6:AbstractProb}. 
\end{prop}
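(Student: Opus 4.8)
The plan is to use a Faedo--Galerkin approximation together with a monotonicity argument (Minty's trick) to pass to the limit; this is the by now classical route for abstract monotone evolution equations, and a detailed account may be found in \cite{emmrich2004} or \cite{roubicek.2013}. Since $V$ is separable and densely embedded in $H$, I would choose a sequence $(e_j)_{j\in\N}\subset V$ that is orthonormal in $H$ with dense linear span in $V$, set $V_m:=\mathrm{span}\{e_1,\dots,e_m\}$, and seek $u_m(t)=\sum_{j=1}^m c^m_j(t)e_j$ with $u_m(0)$ the $H$-orthogonal projection of $u_0$ onto $V_m$ and
\begin{align*}
  \ska[H]{\dot u_m(t)}{e_j}+\inner[V^*,V]{\A(t)u_m(t)}{e_j}=\ska[H]{f(t)}{e_j},
  \qquad j\in\{1,\dots,m\},
\end{align*}
for almost all $t$. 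Written as an ODE system $\dot c^m=F^m(t,c^m)$ for the coefficient vector, the map $F^m$ is measurable in $t$ by Assumption~\ref{as:nonlinPDE}~(i), globally Lipschitz in $c^m$ by part~\ref{item:Alip} (all norms on the finite-dimensional space $V_m$ being equivalent), one-sidedly Lipschitz by part~\ref{item:Amon}, and $t\mapsto F^m(t,0)$ is square integrable thanks to $f\in L^2(0,T;H)$ and part~\ref{item:Abounded}; hence $F^m$ meets Assumption~\ref{as:ODEf} and the Carath\'eodory theory recalled in Section~\ref{sec:ODE} (see also \cite[Chap.~I, Thm~5.3]{hale1980}) produces a unique absolutely continuous solution $u_m$ on all of $[0,T]$.

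The next step is a uniform a priori estimate: multiplying the $j$-th Galerkin equation by $c^m_j(t)$ and summing gives $\tfrac12\frac{\mathrm{d}}{\mathrm{d}t}\norm[H]{u_m(t)}^2+\inner[V^*,V]{\A(t)u_m(t)}{u_m(t)}=\ska[H]{f(t)}{u_m(t)}$, and since $\inner[V^*,V]{\A(t)v}{v}\ge\mu\norm[V]{v}^2-M\norm[V]{v}$ by parts~\ref{item:Abounded} and \ref{item:Amon}, absorbing the lower-order terms with the weighted Young inequality and applying the Gronwall lemma (Lemma~\ref{lem:Gronwall}) yields a bound for $u_m$ in $L^\infty(0,T;H)\cap L^2(0,T;V)$ depending only on $\norm[H]{u_0}$, $\norm[L^2(0,T;H)]{f}$ and $M$, hence uniform in $m$; together with $\norm[V^*]{\A(t)u_m(t)}\le M+L\norm[V]{u_m(t)}$ this also bounds $\A(\cdot)u_m$ in $L^2(0,T;V^*)$. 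Along a subsequence, $u_m\rightharpoonup u$ in $L^2(0,T;V)$, $u_m$ converges weakly-$\ast$ to $u$ in $L^\infty(0,T;H)$, and $\A(\cdot)u_m\rightharpoonup\chi$ in $L^2(0,T;V^*)$ for some $\chi\in L^2(0,T;V^*)$.

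I would then pass to the limit in the Galerkin equations tested against $e_j\varphi$ with $\varphi\in C^1([0,T])$, integrated by parts in time: with $\varphi\in C_c^\infty(0,T)$ and density of $\bigcup_m V_m$ in $V$ this gives $\dot u=f-\chi\in L^2(0,T;V^*)$, so $u\in\mathcal{W}(0,T)\hookrightarrow C([0,T];H)$, and with general $\varphi$ together with $u_m(0)\to u_0$ in $H$ the integration-by-parts formula in $\mathcal{W}(0,T)$ gives $u(0)=u_0$. The hard part is to identify $\chi=\A(\cdot)u$, for which I would use Minty's trick. The integration-by-parts formula gives $\int_0^T\inner[V^*,V]{f-\chi}{u}\diff t=\tfrac12\norm[H]{u(T)}^2-\tfrac12\norm[H]{u_0}^2$; the same identity for $u_m$, combined with $u_m(0)\to u_0$ in $H$, the weak convergence $u_m(T)\rightharpoonup u(T)$ in $H$ (obtained from the limit passage above with $\varphi(T)=1$) and weak lower semicontinuity of $\norm[H]{\cdot}$, yields $\limsup_{m}\int_0^T\inner[V^*,V]{\A(t)u_m}{u_m}\diff t\le\int_0^T\inner[V^*,V]{\chi}{u}\diff t$. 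Since $0\le\int_0^T\inner[V^*,V]{\A(t)u_m-\A(t)v}{u_m-v}\diff t$ for every $v\in L^2(0,T;V)$ by part~\ref{item:Amon}, taking the $\limsup$ and rearranging gives $\int_0^T\inner[V^*,V]{\chi-\A(t)v}{u-v}\diff t\ge0$; inserting $v=u-\lambda w$ with $\lambda>0$ and $w\in L^2(0,T;V)$, dividing by $\lambda$ and letting $\lambda\to0^+$ (the Lipschitz continuity from part~\ref{item:Alip} together with dominated convergence makes this step immediate) yields $\int_0^T\inner[V^*,V]{\chi-\A(t)u}{w}\diff t\ge0$ for all $w$, i.e. $\chi=\A(\cdot)u$, so $u$ is a weak solution.

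For uniqueness, if $u_1,u_2$ are weak solutions then $e:=u_1-u_2\in\mathcal{W}(0,T)$ with $e(0)=0$, and pairing $\dot e+\A(\cdot)u_1-\A(\cdot)u_2=0$ with $e$ and integrating over $[0,t]$ gives, by strong monotonicity (part~\ref{item:Amon}), $\tfrac12\norm[H]{e(t)}^2+\mu\int_0^t\norm[V]{e(s)}^2\diff s\le0$, hence $e\equiv0$ (under the G{\aa}rding-type hypothesis $(iv')$ of Remark~\ref{rem:Gaarding} one finishes instead with Lemma~\ref{lem:Gronwall}, which accounts for the exponentially growing constant mentioned there). The only genuinely delicate point is the $\limsup$ estimate for the ``energy'' $\int_0^T\inner[V^*,V]{\A(t)u_m}{u_m}\diff t$ used in the Minty argument, since it rests on the weak convergence $u_m(T)\rightharpoonup u(T)$ in $H$ and on the integration-by-parts formula in $\mathcal{W}(0,T)$; everything else is routine, and indeed the whole statement can alternatively be quoted directly from \cite{emmrich2004} or \cite{roubicek.2013}.
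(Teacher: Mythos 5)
The paper does not give a proof of this proposition: it remarks that the result is ``intermediate'' between the linear and the Browder--Minty settings and delegates the proof to \cite[Thm~30.A]{zeidler1990B}, \cite[Satz~8.4.2]{emmrich2004}, \cite[Theorem~8.9]{roubicek.2013}, noting that small modifications of the linear arguments would also work because $\A$ is Lipschitz. Your Faedo--Galerkin construction with the energy estimate, the $\limsup$-bound on $\int_0^T\inner[V^*,V]{\A(t)u_m}{u_m}\diff t$ via weak lower semicontinuity and the integration-by-parts identity in $\mathcal{W}(0,T)$, and Minty's trick (whose hemicontinuity hypothesis is handed to you by the Lipschitz condition \ref{item:Alip}) is exactly the proof those references contain, and it checks out in every step, including the uniqueness argument. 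So this is the same route as the paper's intended one, merely written out in full rather than quoted.
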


Most proofs for this kind of statement that can be found in the literature 
are either for linear problems, see for example \cite[Cor.~23.26]{zeidler1990} 
or \cite[Satz~8.3.6]{emmrich2004}, or for nonlinear 
problems in a Browder--Minty setting, compare for example 
\cite[Thm~30.A]{zeidler1990B}, \cite[Satz~8.4.2]{emmrich2004} or 
\cite[Theorem~8.9]{roubicek.2013}. Our assertion 
is intermediate since we consider nonlinear operators that are still 
Lipschitz continuous. Therefore, the aforementioned references for nonlinear 
problems can be used but we note that also small modifications of the proofs 
for linear problems would be sufficient. 

\begin{remark}
	Note that for mere existence results, it is sufficient to assume $f \in 
	L^2(0,T;V^*)+ L^1(0,T;H)$. The last proposition and some of the following 
	statements would also hold under this more general condition. To obtain a 
	rate of convergence for the numerical scheme, the additional assumption 
	$f\in L^2(0,T;H)$ will be essential.
\end{remark}

In the following, we will consider a full discretization of the
problem \eqref{eq6:AbstractProb},  
i.e., we will discretize the equation both in time and space. For this purpose
let $N \in \N$ denote the number of temporal steps and set $k =\frac{T}{N}$ as 
the temporal step size. For this particular $N$ and $k$ we obtain an 
equidistant partition of the interval $[0,T]$ given by $t_n := k n$, $n \in 
\{0,\dots,N\}$. 
Further, we introduce the family of independent and 
$\mathcal{U}(0,1)$-distributed random variables $\tau = (\tau_{n})_{n \in \N}$ 
on a complete probability space $(\Omega,\F,\P)$ and write $\tn_n = t_n
+  k\tau_n$ for $n\in\N$. Let $(\F_n)_{n\in \{0,\dots, N\}}$ be the complete
filtration which is induced by $(\tn_n)_{n\in \{0,\dots, N\}}$, compare
with  \eqref{eq3:filtration}.

For the space discretization we consider an abstract Galerkin method.
To this end let $(V_h)_{h \in (0,1)}$ be a sequence of
finite-dimensional  subspaces of $V$ each endowed with the inner product 
$\ska[H]{\cdot}{\cdot}$ and the norm $\|\cdot\|_H$ of $H$. Further, for each 
$h \in (0,1)$ 
we denote by $P_h \colon H \to V_h$ the orthogonal
projection onto the Galerkin space $V_h$ with respect to $(\cdot,\cdot)_H$.
More precisely, for each $v \in H$ we define $P_h v$ as the uniquely
determined element in $V_h$ that satisfies
\begin{align}
  \label{eq6:defPh}
  ( P_h v, w_h )_H = (v, w_h)_H, \quad \text{ for all } w_h \in V_h.
\end{align}
In order to formulate the equation \eqref{eq6:AbstractProb} in a suitable 
discrete setting, we also introduce a discrete version $\A_h \colon [0,T] 
\times V_h
\to  V_h$ of the operator $\A$. This is accomplished in the same way as above
by defining $\A_h(t) v_h$ for given
$t\in [0,T]$ and $v_h \in V_h$ as the unique element in $V_h$ that fulfills
\begin{align}
  \label{eq6:defAh}
  \big( \A_h(t) v_h, w_h \big)_H = \inner[V^*,V]{\A(t) v_h }{w_h}
\end{align}
for every $w_h \in V_h$. The existence of a unique $\A_h(t) v_h \in V_h$ 
follows directly from the Riesz representation theorem.

Our aim is to examine the numerical scheme
\begin{align}
  \label{eq6:parabolicScheme}
  \begin{split}
	  \begin{cases}
	    U^{n}_{h} + k \A_h(\tn_{n}) U^{n}_{h} = k P_h f(\tn_n) + 
	    U^{n-1}_{h},\quad \text{ for } 
	    n \in \{ 1,\ldots,N \},\\
	    U^{0}_h = P_h u_0.  
	  \end{cases}
  \end{split}
\end{align}
Note that, as in the finite-dimensional case in Section~\ref{sec:randEul},
the numerical approximation $(U^n_h)_{n\in \{0,\dots, N \}}$ consists of a
family of random variables taking values in $V_h$.
Before we analyze the convergence of the scheme \eqref{eq6:parabolicScheme}
the following lemma shows that $(U^n_h)_{n \in \{0,\ldots,N\}}$ is indeed
well-defined for every value of the step size $k$.

\begin{lemma}
	\label{lem6:exist}
  Let Assumption~\ref{as:nonlinPDE} be satisfied. Then for every inhomogeneity 
  $f\in L^2(0,T;H)$, every initial value $u_0\in H$, and every step size $k =
  \frac{T}{N}$, $N \in \N$, there exists a unique  
  solution  $(U^n_h)_{n\in  \{0,\dots,N \}}$ to the implicit scheme 
  \eqref{eq6:parabolicScheme} such that for every $n \in \{1,\ldots,N\}$ 
  the element $U^n_h$ is $\F_n$-measurable and $U^n_h(\omega) \in V_h$ for 
  almost every $\omega \in \Omega$.
\end{lemma}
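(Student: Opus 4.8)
The plan is to follow the proof of Lemma~\ref{lem:exist} almost verbatim, with $\R^d$ replaced by the finite-dimensional Hilbert space $(V_h,\ska[H]{\cdot}{\cdot})$ and the one-sided Lipschitz estimate \eqref{eq3:coerc} replaced by the strong monotonicity in Assumption~\ref{as:nonlinPDE}~\ref{item:Amon}. I argue by induction on $n\in\{0,\dots,N\}$; the case $n=0$ is trivial because $U^0_h=P_hu_0\in V_h$ is deterministic and hence $\F_0$-measurable. Note that, in contrast to the ODE setting, Assumption~\ref{as:nonlinPDE} imposes its conditions on $\A$ for \emph{every} $t\in[0,T]$ and $f(t)\in H$ for every $t$, so there is no exceptional null set to keep track of and one may work on all of $\Omega$.

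For the inductive step I assume $U^{n-1}_h$ is $\F_{n-1}$-measurable with values in $V_h$, and I define, for $\omega\in\Omega$ and $x\in V_h$,
\[
  h_n(\omega,x):=x-U^{n-1}_h(\omega)-kP_hf(\tn_n(\omega))+k\A_h(\tn_n(\omega))x .
\]
By Assumption~\ref{as:nonlinPDE}~\ref{item:Alip} the map $x\mapsto\A_h(\tn_n(\omega))x$ is Lipschitz on $V_h$, so $h_n(\omega,\cdot)$ is continuous for every $\omega$. For fixed $x$, the map $\omega\mapsto h_n(\omega,x)$ is $\F_n$-measurable: $\omega\mapsto\tn_n(\omega)$ is $\F_n$-measurable, $t\mapsto\inner[V^\ast,V]{\A(t)x}{w_h}=\ska[H]{\A_h(t)x}{w_h}$ is measurable for every $w_h\in V_h$ by Assumption~\ref{as:nonlinPDE}\,(i) (which, since $V_h$ is finite-dimensional, already yields measurability of $\omega\mapsto\A_h(\tn_n(\omega))x$), $P_h$ is continuous, $f$ is strongly measurable, and $U^{n-1}_h$ is $\F_{n-1}\subseteq\F_n$-measurable.

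The key estimate is the coercivity condition for Lemma~\ref{lem:root}, which I apply either in the $\ska[H]{\cdot}{\cdot}$-version of Remark~\ref{rem:InnerProd} or after fixing an $H$-orthonormal basis to identify $V_h$ with $\R^{\dim V_h}$. By Assumption~\ref{as:nonlinPDE}~\ref{item:Abounded} and the equivalence of $\|\cdot\|_V$ and $\|\cdot\|_H$ on $V_h$ there is a constant $M_h<\infty$ with $\|\A_h(t)0\|_H\le M_h$ for all $t$, and then \eqref{eq6:defAh} and Assumption~\ref{as:nonlinPDE}~\ref{item:Amon} give
\[
  \ska[H]{\A_h(t)x}{x}=\inner[V^\ast,V]{\A(t)x-\A(t)0}{x}+\ska[H]{\A_h(t)0}{x}\ge\mu\|x\|_V^2-M_h\|x\|_H\ge-M_h\|x\|_H .
\]
With $R=R(\omega):=\|U^{n-1}_h(\omega)\|_H+k\|f(\tn_n(\omega))\|_H+kM_h$ one checks $\ska[H]{h_n(\omega,x)}{x}\ge R^2-R^2=0$ whenever $\|x\|_H=R$, so Lemma~\ref{lem:root} produces a root; this root is unique because two roots $x,y$ satisfy $\|x-y\|_H^2=-k\inner[V^\ast,V]{\A(\tn_n(\omega))x-\A(\tn_n(\omega))y}{x-y}\le-k\mu\|x-y\|_V^2\le0$. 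Unlike in Lemma~\ref{lem:exist}, no step size restriction appears, since the term $\mu\|x\|_V^2$ has the right sign.

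I then set $U^n_h(\omega)$ equal to this unique root, so that $U^n_h$ takes values in $V_h$ and solves \eqref{eq6:parabolicScheme}; its $\F_n$-measurability follows from Lemma~\ref{lem:measurable}, applied with $\tilde{\mathcal{F}}=\F_n$ and $\mathcal{M}=\Omega$, whose three hypotheses were verified above. The only genuinely delicate point — and the main obstacle — is that Lemma~\ref{lem:measurable} is stated for $\R^d$-valued maps: one must either invoke the basis identification above or note that its proof carries over verbatim to maps with values in an arbitrary finite-dimensional inner product space. Although not needed for this lemma, one obtains in addition, by testing \eqref{eq6:parabolicScheme} with $U^n_h$ and using Assumption~\ref{as:nonlinPDE}~\ref{item:Amon}, the a priori bound $\|U^n_h\|_{L^2(\Omega;V_h)}\le\|U^{n-1}_h\|_{L^2(\Omega;V_h)}+k^{1/2}\|f\|_{L^2(t_{n-1},t_n;H)}$, so that $U^n_h\in L^2(\Omega,\F_n,\P;V_h)$.
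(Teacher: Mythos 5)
Your proof is correct and follows essentially the same route as the paper: reduce the implicit equation on $V_h$ to a finite-dimensional root-finding problem, obtain existence from Lemma~\ref{lem:root} (via Remark~\ref{rem:InnerProd} or an $H$-orthonormal basis), uniqueness from the strong monotonicity, and measurability from Lemma~\ref{lem:measurable}. The one place where you diverge is the coercivity estimate: you discard the $\mu\|x\|_V^2$ term and instead bound $\|\A_h(t)0\|_H\le M_h$ using the $h$-dependent equivalence of $\|\cdot\|_V$ and $\|\cdot\|_H$ on $V_h$, whereas the paper retains $\mu\|v_{\mathbf{x}}\|_V^2$ and applies Young's inequality to obtain the $h$-independent bound $\inner[V^\ast,V]{\A(t)v_{\mathbf{x}}}{v_{\mathbf{x}}}\ge -M^2/(4\mu)$, which then yields an $h$-uniform choice of radius $R$. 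For the existence statement in this lemma either estimate suffices; the paper's version has the small advantage of being uniform in $h$ and of reusing exactly the computation \eqref{eq6:compA} in the a priori bound of Lemma~\ref{lem:aprioriPDE}.
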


\begin{proof}
  Let $h \in (0,1)$ be fixed. To prove the existence of a suitable solution
  to \eqref{eq6:parabolicScheme}, we introduce an equivalent problem in $\R^d$
  with $d = \dim(V_h)$ such that we can apply arguments from
  Section~\ref{sec:randEul} to prove the existence of a unique solution
  $(U^n_h)_{n\in \{0,\dots,N \}}$. 
  To this end, we consider a basis $\{\psi_1,\dots, \psi_d\}$ of
  the finite-dimensional  space $V_h$  
  and test \eqref{eq6:parabolicScheme} with a basis element 
  $\psi_j$, $j \in \{ 1,\dots,d \}$. Then \eqref{eq6:parabolicScheme} can 
  equivalently be rewritten as the following system of scalar equations
  \begin{align}
    \label{eq6:testeq}
    \begin{split}
	    \begin{cases}
	      \ska[H]{U^n_h + k \A_h(\tn_n) U^n_h}{\psi_j} 
	      = \ska[H]{k P_h f(\tn_n) + U^{n-1}_h}{\psi_j}, \quad \text{ for } 
	      n \in \{ 1,\ldots,N \}, \\
	      ( U_h^0, \psi_j )_H = ( u_0, \psi_j )_H, 
	    \end{cases}
    \end{split}
  \end{align}
  for all $j \in \{ 1,\ldots, d \}$.
  Since the inhomogeneity $P_h f \in L^2(0,T;V_h)$ takes
  values in $V_h$ it can be represented by
  \begin{align*}
    P_h f = \sum_{i=1}^{d} f_{h,i} \psi_i,
  \end{align*}
  where $f_{h,i} \in L^2(0,T; \R)$ for each $i \in \{ 1, \ldots, d \}$. 
  In order to prove the existence of the $V_h$-valued random variable $U^n_h$,
  we will show that there exist measurable functions $\alpha^n_{h,i} \colon
  \Omega \to \R$, $i \in \{ 1,\ldots,d\}$, $n \in \{0,\ldots,N\}$, such that  
  \begin{align*}
    U^n_h = \sum_{i=1}^{d} \alpha_{h,i}^{n} \psi_i
  \end{align*}
  satisfies \eqref{eq6:parabolicScheme}. For $n= 0$ this follows at once.
  
  For the case $n > 0$ let us denote the vector of all coordinates
  $(\alpha^n_{h,i})_{i \in \{1,\ldots,d\}}$
  and $(f_{h,i})_{i \in \{1,\ldots,d\}}$  by 
  \begin{align*}
    \mathbf{u_{h}^{n}}(\omega) := \big (\alpha_{h,i}^{n}(\omega) 
    \big)_{i \in \{ 1,\dots,d \}}
    \quad \text{and}\quad
    \mathbf{f_h}(t) := \big( f_{h,i}(t) \big)_{i \in \{ 1,\dots, d \}}
  \end{align*}
  for almost every $\omega \in \Omega$ and $t\in [0,T]$.  
  Furthermore, we denote the mass matrix in $\R^{d,d}$ by
  \begin{align*}
    \mathbf{M_h} = \big( \ska[H]{\psi_i}{\psi_j} \big)_{i,j \in \{ 1,\dots, 
    d\}}.
  \end{align*}
  It is easily seen that $\mathbf{M_h} \in \R^{d,d}$
  is symmetric and positive definite. 
  In order to obtain a corresponding representation for $\A_h(t) \colon 
  V_h \to V_h$, $t\in [0,T]$, we introduce $\mathbf{A_h} \colon [0,T] \times
  \R^d \to \R^d$ such that for $t\in [0,T]$ and $\mathbf{x} \in \R^d$ 
  the vector $\mathbf{A_h}(t,\mathbf{x}) \in \R^d$ is determined by
  \begin{align*}
    \sum_{i=1}^{d} \big[ \mathbf{A_h}(t,\mathbf{x})\big]_i \psi_i
    = \A_h(t)v_{\mathbf{x}} \in V_h,
  \end{align*}
  where $v_{\mathbf{x}} = \sum_{i=1}^{d} \mathbf{x}_i \psi_i$.
  Then \eqref{eq6:testeq} can equivalently be written as
  \begin{align*}
    \mathbf{M_h}\mathbf{u^n_h} + k \mathbf{M_h} \mathbf{A_h}(\tn_n, 
    \mathbf{u^n_h}) 
    = k \mathbf{M_h} \mathbf{f_h}(\tn_n) + \mathbf{M_h} \mathbf{u^{n-1}_h}, 
  \end{align*}
  or simply
  \begin{align*}
    \mathbf{u^n_h}  = \mathbf{u^{n-1}_h} 
    + k \big(\mathbf{f_h}(\tn_n) - \mathbf{A_h}(\tn_n,  \mathbf{u^n_h})\big).
  \end{align*}
  
  In order to transfer the monotonicity and Lipschitz continuity of $\A_h$ to 
  its counterpart, we introduce the following inner product and norm in 
  $\R^d$: 
  \begin{align*}
  \ska[\mathbf{M_h}]{\mathbf{x}}{\mathbf{y}} = \mathbf{x}^T 
  \mathbf{M_h}\mathbf{y}
  \quad \text{ and } \quad
  \|\mathbf{x}\|_{\mathbf{M_h}} = 
  \sqrt{\ska[\mathbf{M_h}]{\mathbf{x}}{\mathbf{x}}}
  \end{align*}
  for $\mathbf{x}, \mathbf{y} \in \R^d$. This particular choice of inner 
  product coincides with the inner product of $H$ of the elements 
  $u_{\mathbf{x}}, v_{\mathbf{x}} \in H$ given by
  \begin{align*}
  u_{\mathbf{x}} = \sum_{i=1}^{d} \mathbf{x}_i \psi_i
  \quad \text{ and } \quad
  v_{\mathbf{y}} = \sum_{i=1}^{d} \mathbf{y}_i \psi_i
  \end{align*}
  for $\mathbf{x},\mathbf{y} \in \R^d$, i.e., the following equalities hold:
  \begin{align*}
  \ska[\mathbf{M_h}]{\mathbf{x}}{\mathbf{y} } = 
  \ska[H]{u_{\mathbf{x}}}{v_{\mathbf{y}}}
  \quad \text{and} \quad
  \|\mathbf{x}\|_{\mathbf{M_h}} = \|u_{\mathbf{x}} \|_H.
  \end{align*}

  To prove the existence of an element $\mathbf{u^n_h}(\omega)$ for almost
  every $\omega \in \Omega$ we use Lemma~\ref{lem:root}. To this end, we 
  introduce the function
  \begin{align*}
    \mathbf{g} \colon \Omega \times \R^d \to \R^d, \quad 
    \mathbf{g}(\omega,\mathbf{x}) = 
    \mathbf{x} - \mathbf{u^{n-1}_h}(\omega) 
    - k \big(\mathbf{f_h}(\tn_n(\omega)) - \mathbf{A_h}(\tn_n(\omega),  
    \mathbf{x})\big).
  \end{align*}
  Define $q = q(\omega) := \| U^{n-1}_h (\omega)\|_{H} + k \| 
  f(\tn_n(\omega))\|_{H}$. Observe that for almost every $\omega \in \Omega$ we
  have $q(\omega) \in [0,\infty)$. In the following we consider an arbitrary
  but fixed $\omega \in \Omega$ with this property. Next, we introduce
  \begin{align*}
    R = R(\omega) = \frac{q}{2} 
      + \sqrt{ \frac{q^2}{4}  + k \frac{M^2}{4\mu}} \in (0, \infty).
  \end{align*}  
  For all $\mathbf{x} \in \R^d$ with $\|\mathbf{x}\|_{\mathbf{M_h}} = R$ we
  then obtain
  \begin{align}
    \notag &\ska[\mathbf{M_h}]{\mathbf{g}(\omega,\mathbf{x})}{\mathbf{x}}\\
    \notag &\quad = \ska[\mathbf{M_h}]{\mathbf{x} - \mathbf{u^{n-1}_h}(\omega) 
      - k \big(\mathbf{f_h}(\tn_n(\omega)) - \mathbf{A_h}(\tn_n(\omega),  
      \mathbf{x})\big)}{\mathbf{x}}\\
			\notag &\quad = \| \mathbf{x} \|^2_{\mathbf{M_h}}
			- \ska[\mathbf{M_h}]{\mathbf{u^{n-1}_h}(\omega)}{\mathbf{x}}
      - k \ska[\mathbf{M_h}]{ \mathbf{f_h}(\tn_n(\omega))}{\mathbf{x}}
      + k \ska[\mathbf{M_h}]{ \mathbf{A_h}(\tn_n(\omega), 
      \mathbf{x})}{\mathbf{x}}\\
    \label{eq6:root1} &\quad \geq R^2
      - R \| U^{n-1}_h (\omega)\|_{H} 
      - k R \| f(\tn_n(\omega))\|_{H} 
      + k \inner[V^*,V]{ \A(\tn_n(\omega)) v_{\mathbf{x}}}{v_{\mathbf{x}}}
  \end{align}
  where $v_{\mathbf{x}} = \sum_{i=1}^{d} \mathbf{x}_i \psi_i$. Using 
  Assumption~\ref{as:nonlinPDE}~\ref{item:Abounded} and~\ref{item:Amon}, the 
  last summand of \eqref{eq6:root1} can be estimated by
  \begin{align}
    \label{eq6:compA}
    \begin{split}
      &\inner[V^*,V]{ \A(\tn_n(\omega)) v_{\mathbf{x}}}{v_{\mathbf{x}}}\\
      &\quad = \inner[V^*,V]{ \A(\tn_n(\omega)) v_{\mathbf{x}} - 
      \A(\tn_n(\omega)) 
      0}{v_{\mathbf{x}} - 0 } 
      + \inner[V^*,V]{ \A(\tn_n(\omega)) 0}{v_{\mathbf{x}} }\\
      &\quad \geq \mu \|v_{\mathbf{x}} \|_{V}^2  
      - \|\A(\tn_n(\omega)) 0 \|_{V^*} \|v_{\mathbf{x}} \|_V\\
      &\quad \geq \mu \|v_{\mathbf{x}} \|_{V}^2  - M \|v_{\mathbf{x}} 
      \|_V\\
      &\quad \geq \mu \|v_{\mathbf{x}} \|_{V}^2  - \frac{M^2}{4 \mu}  - \mu 
      \|v_{\mathbf{x}} \|_V^2 = - \frac{M^2}{4\mu}.     
    \end{split}
  \end{align}
  Therefore, after inserting $R$ we obtain
  \begin{align*}
    \ska[\mathbf{M_h}]{\mathbf{g}(\omega,\mathbf{x})}{\mathbf{x}}
      \geq  R^2
      - R \big( \| U^{n-1}_h (\omega)\|_{H} 
      + k \| f(\tn_n(\omega))\|_{H} \big) 
      - k \frac{M^2}{4\mu}
      = 0.
  \end{align*}
  Since $\mathbf{A_h}(\tn_n(\omega), \cdot )$ is continuous in the second 
  argument due to Assumption~\ref{as:nonlinPDE}~\ref{item:Alip}, this allows 
  us to apply Lemma~\ref{lem:root} and Remark~\ref{rem:InnerProd}. Thus, for 
  almost every $\omega \in \Omega$ we 
  obtain the existence of an element  $\mathbf{x} = \mathbf{x}(\omega) 
  \in \R^d$ such that 
  $\mathbf{g}(\omega, \mathbf{x}) = 0$ holds. To prove that 
  this root is unique, assume that there exist $\mathbf{x}, \mathbf{y} \in 
  \R^d$ such that
  \begin{align*}
    \mathbf{g}(\omega,\mathbf{x})=0 \quad \text{ and } \quad
    \mathbf{g}(\omega,\mathbf{y})=0
  \end{align*}
  is fulfilled. Then, inserting the definition of the function $\mathbf{g}$ 
  leads to
  \begin{align*}
    \| \mathbf{x} - \mathbf{y}\|_{\mathbf{M_h}}^2
    & = - k\ska[\mathbf{M_h}]{\mathbf{A_h}(\tn_n(\omega),\mathbf{x}) - 
    \mathbf{A_h}(\tn_n(\omega),\mathbf{y})}{\mathbf{x} - \mathbf{y}}\\
    & = - k\ska[H]{\A_h(\tn_n(\omega))v_\mathbf{x} - 
      \A_h(\tn_n(\omega)) v_\mathbf{y} }{v_\mathbf{x} - v_\mathbf{y}} \le 0. 
  \end{align*}
  This implies $\mathbf{x} = \mathbf{y}$.
  An application of Lemma~\ref{lem:measurable} then yields that the 
  mapping
  \begin{align*}
    \mathbf{u_h^n} \colon \Omega \to \R^d, \quad \omega \mapsto 
    \mathbf{u_h^n}(\omega) = \big (\alpha_{h,i}^{n}(\omega) 
    \big)_{i \in \{ 1,\dots,d \}}
  \end{align*}  
  is $\F_n$-measurable, where $\mathbf{u_h^n}(\omega) = \mathbf{x}(\omega)$
  is the unique root of $\mathbf{g}(\omega,\cdot)$. To sum up,
  \begin{align*}
    U^n_h = \sum_{i=1}^{d} \alpha_{h,i}^{n} \psi_i,\quad n\in \{ 0,\dots,N\}, 
  \end{align*}
  is the well-defined 
  solution to the scheme \eqref{eq6:parabolicScheme}. Since  $\{ 
  \psi_1,\dots, \psi_d \}$ is a basis of $V_h$ this implies that 
  $U^n_h(\omega) \in V_h$ for almost 
  every $\omega \in \Omega$ and all $n\in \{ 0,\dots,N \}$.
\end{proof}

\begin{lemma}
  \label{lem:aprioriPDE}
  Let Assumption~\ref{as:nonlinPDE} be satisfied. Then for every 
  $f\in L^2(0,T;H)$, every initial value $U_h^0 \in V_h$, and every step size
  $k = \frac{T}{N}$, $N \in \N$, the unique  
  solution  $(U^n_h)_{n\in  \{0,\dots,N \}}$ to the implicit scheme 
  \eqref{eq6:parabolicScheme} satisfies the a priori bound    
  \begin{align*}
    &\max_{n \in\{0, \ldots,N\}} \E \big[\|U_h^n\|_H^2 \big] 
    + \sum_{j=1}^{N} \E \big[\|U_h^j - U_h^{j-1}\|_H^2 \big] 
    + k \mu \sum_{j=1}^{N} \E \big[\|U_h^j \|_V^2 \big]\\
    &\qquad \leq C \big( T + \|U_h^0\|_H^2 + \| f \|_{L^2(0,T;H)}^2 \big),
  \end{align*}
  where the constant $C$ only depends on $M$, $\mu$, and the embedding $V
  \hookrightarrow H$.
\end{lemma}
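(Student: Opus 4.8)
The plan is to carry out the classical energy estimate for the implicit Euler discretization of a monotone evolution equation $\omega$-wise and then average, using that $\tn_n\sim\mathcal{U}(t_{n-1},t_n)$ to turn the random source term into a Bochner integral of $f$. Fix $h\in(0,1)$ and $N\in\N$, set $k=\tfrac{T}{N}$, and let $(U^n_h)_{n\in\{0,\dots,N\}}$ be the solution from Lemma~\ref{lem6:exist}. Since $V_h$ is finite-dimensional, all norms on $V_h$ are equivalent, so an inductive $L^2(\Omega)$-estimate exactly as in the proof of Lemma~\ref{lem:exist} gives $U^n_h\in L^2(\Omega,\F_n,\P;V_h)$ and $\A_h(\tn_n)U^n_h\in L^2(\Omega;V_h)$ for every $n$; this is all that is needed to legitimize taking expectations below, and the final constant will not involve $h$ or $N$.

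First I would take the $\ska[H]{\cdot}{\cdot}$-inner product of \eqref{eq6:parabolicScheme} with $U^n_h$, $n\in\{1,\dots,N\}$. Using $\ska[H]{a-b}{a}=\tfrac12\big(\|a\|_H^2-\|b\|_H^2+\|a-b\|_H^2\big)$ on the difference term, the defining relation \eqref{eq6:defAh} for the operator term, and $\ska[H]{P_hf(\tn_n)}{U^n_h}=\ska[H]{f(\tn_n)}{U^n_h}$ (valid since $U^n_h\in V_h$ and $P_h$ is self-adjoint), this yields almost surely
\begin{align*}
  &\tfrac12\big(\|U^n_h\|_H^2-\|U^{n-1}_h\|_H^2+\|U^n_h-U^{n-1}_h\|_H^2\big)
  +k\inner[V^*,V]{\A(\tn_n)U^n_h}{U^n_h}\\
  &\qquad=k\ska[H]{f(\tn_n)}{U^n_h}.
\end{align*}
Arguing as in \eqref{eq6:compA} (Assumption~\ref{as:nonlinPDE}~\ref{item:Amon} with $v_2=0$, then~\ref{item:Abounded} and the weighted Young inequality), the operator term is bounded below by $k\big(\tfrac{\mu}{2}\|U^n_h\|_V^2-\tfrac{M^2}{2\mu}\big)$. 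For the right-hand side I would use the Cauchy--Schwarz inequality, the continuous embedding bound $\|w\|_H\le c\|w\|_V$ for $V\hookrightarrow H$, and Young's inequality to obtain $k\ska[H]{f(\tn_n)}{U^n_h}\le\tfrac{k\mu}{4}\|U^n_h\|_V^2+C_1 k\|f(\tn_n)\|_H^2$ with $C_1$ depending only on $\mu$ and $c$. Absorbing the $V$-term into the left-hand side and rearranging leaves, almost surely,
\begin{align*}
  &\tfrac12\|U^n_h\|_H^2-\tfrac12\|U^{n-1}_h\|_H^2
  +\tfrac12\|U^n_h-U^{n-1}_h\|_H^2+\tfrac{k\mu}{4}\|U^n_h\|_V^2\\
  &\qquad\le\tfrac{M^2}{2\mu}k+C_1 k\|f(\tn_n)\|_H^2.
\end{align*}

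Finally I would sum this over $n\in\{1,\dots,m\}$ for arbitrary $m\in\{1,\dots,N\}$, telescoping the first two terms, and take expectations. Since $\tn_n\sim\mathcal{U}(t_{n-1},t_n)$ we have $\E\big[k\|f(\tn_n)\|_H^2\big]=\int_{t_{n-1}}^{t_n}\|f(s)\|_H^2\diff{s}$, hence $k\sum_{n=1}^m\E\big[\|f(\tn_n)\|_H^2\big]\le\|f\|_{L^2(0,T;H)}^2$, while $\tfrac{M^2}{2\mu}mk\le\tfrac{M^2}{2\mu}T$. This gives, for every $m\in\{1,\dots,N\}$,
\begin{align*}
  &\tfrac12\E\big[\|U^m_h\|_H^2\big]+\tfrac12\sum_{j=1}^m\E\big[\|U^j_h-U^{j-1}_h\|_H^2\big]
  +\tfrac{k\mu}{4}\sum_{j=1}^m\E\big[\|U^j_h\|_V^2\big]\\
  &\qquad\le\tfrac12\|U^0_h\|_H^2+\tfrac{M^2}{2\mu}T+C_1\|f\|_{L^2(0,T;H)}^2.
\end{align*}
Because the right-hand side is independent of $m$ and the two sums on the left are nondecreasing in $m$, bounding each of the three left-hand terms separately by the right-hand side and adding the results yields the asserted estimate with a constant $C$ depending only on $M$, $\mu$, and $c$. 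I do not expect a substantial obstacle: the only subtleties are the a priori square-integrability of $U^n_h$ and $\A_h(\tn_n)U^n_h$ needed to justify the expectations (handled by the finite dimension of $V_h$ together with the inductive bound borrowed from Lemma~\ref{lem:exist}) and checking that the equivalence-of-norms constant on $V_h$ enters only that integrability step and not the final estimate, so that $C$ is genuinely independent of $h$ and $N$.
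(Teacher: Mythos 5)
Your proof is correct and follows essentially the same route as the paper: test the scheme with $U_h^n$ in the $H$ inner product, apply the polarization identity, bound the operator term from below via monotonicity and boundedness (i.e., the \eqref{eq6:compA} argument), bound the source term via Cauchy--Schwarz, the embedding $V\hookrightarrow H$, and Young's inequality, then sum, take expectations, and exploit $\tn_n\sim\mathcal{U}(t_{n-1},t_n)$ to replace the random source terms by $\|f\|_{L^2(0,T;H)}^2$. The only differences are cosmetic (a slightly different split in the weighted Young inequalities, and the explicit remark on the a priori square-integrability justifying the expectations, which the paper leaves implicit).
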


\begin{proof}
  Due to the definition of scheme \eqref{eq6:parabolicScheme} we can write 
  for every $j \in \{ 1,  \ldots,N\}$
  \begin{align*}
    \frac{1}{k} \big(U_h^j - U_h^{j-1}\big) + \A_h(\tn_j)U_h^j = f(\tn_j).
  \end{align*}  
  We test this equation with $U_h^j$ in the $H$ inner product and 
  apply the polarization identity
  \begin{align*}
    \frac{1}{2}\big(\|U_h^j\|_H^2 - \|U_h^{j-1}\|^2_H + \|U_h^j - 
    U_h^{j-1}\|_H^2 \big) 
    = \ska[H]{U_h^j - U_h^{j-1}}{U_h^j}.
  \end{align*}
  In addition, recall from \eqref{eq6:defAh} and \eqref{eq6:compA} that
  \begin{align*}
    \big( \A_h(\tn_j) U^j_h, U^j_h \big)_H 
    = \big\langle \A(\tn_j) U^j_h, U^j_h \big\rangle_{V^\ast,V}
    \ge \mu \big\| U_h^j \big\|_V^2 - \big\| \A(\tn_j)0 \big\|_{V^\ast} \big\|
    U_h^j \big\|_V.
  \end{align*}
  From this and \eqref{eq6:parabolicScheme} as well as from
  Assumption~\ref{as:nonlinPDE}~\ref{item:Abounded} we obtain that
  \begin{align*}
    &\frac{1}{2k}\big(\|U_h^j\|_H^2 - \|U_h^{j-1}\|_H^2 +     \|U_h^j - 
    U_h^{j-1}\|_H^2 \big) + \mu \|U_h^j\|_V^2\\
    &\quad\leq \frac{1}{k} \ska[H]{U_h^j - U_h^{j-1} }{U_h^j}
    + \ska[H]{\A_h(\tn_j)U_h^j }{U_h^j} 
    +\big\| \A(\tn_j)0 \big\|_{V^\ast} \big\| U_h^j \big\|_V\\
    &\quad = \ska[H]{ f(\tn_j)}{ U_h^j} + M \big\| U_h^j \big\|_V\\
    &\quad \leq \| f(\tn_j) \|_{H} \| U_h^j \|_H + M \| U_h^j \|_V\\
    &\quad \le C \big( 1 +  \| f(\tn_j) \|_{H}^2 \big) + \frac{\mu}{2}
    \| U_h^j \|_V^2,
  \end{align*}
  where the constant $C$ only depends on $M$, $\mu$, and the embedding $V
  \hookrightarrow H$. Next we sum up with respect to $j$ from $1$ to $n$ and 
  obtain
  \begin{align*}
     \|U_h^n\|_H^2 + \sum_{j=1}^{n} \|U_h^j - 
      U_h^{j-1}\|_H^2  + k \mu \sum_{j=1}^{n} \|U_h^j \|_V^2
    \leq \|U_h^0\|_H^2 + k 2 C \sum_{j=1}^{n}\big( 1 +  \| f(\tn_j)
    \|_{H}^2\big).
  \end{align*} 
  Taking the expectation, we further obtain
  for the term containing the inhomogeneity $f$ that
  \begin{align*}
    \E \Big[ k \sum_{j=1}^{n} \| f(\tn_j) \|_{H}^2  \Big]
    = \sum_{j=1}^{n} \int_{t_{j-1}}^{t_j} \| f(t) \|_{H}^2 \diff{t} 
    \leq \big\| f \big\|_{L^2(0,T;H)}^2
  \end{align*}
  holds. This completes the proof.
\end{proof}

After these preparatory results we can now state the abstract convergence
result for the numerical method \eqref{eq6:parabolicScheme}. For its
formulation we define for each $v \in H$
\begin{align}
  \label{eq6:distH}
  \mathrm{dist}_H(v, V_h) &:= \inf_{v_h \in V_h} \| v - v_h \|_H.
\end{align}
Similarly, if $v \in V$ we set
\begin{align}
  \label{eq6:distV} 
  \mathrm{dist}_V(v, V_h) &:= \inf_{v_h \in V_h} \| v - v_h \|_V.
\end{align}
With \eqref{eq6:distH} we therefore measure how well a given element $v \in H$
can be approximated by elements from $V_h$. Since $V_h$ is
finite-dimensional  it is clear that $P_h v \in V_h$ has the best approximation
properties with respect to the $H$-norm, that is
\begin{align}
  \label{eq6:bestH}
  \| P_h v - v \|_H = \mathrm{dist}_H(v, V_h),\quad \text{ for all } v \in H.
\end{align}
In the same way, if we define $Q_h \colon V \to V_h$ as the orthogonal projection
onto $V_h$ with respect to the inner product $(\cdot, \cdot)_V$, then it holds
true that
\begin{align}
  \label{eq6:bestV}
  \| Q_h v - v \|_V = \mathrm{dist}_V(v, V_h),\quad \text{ for all } v \in V.
\end{align}
Since we consider a general Galerkin method in this section we will not
quantify the best approximation property of $(V_h)_{h \in (0,1)}$ at 
this point.

We also mention that the error estimate in Theorem~\ref{th6:conv}
requires the boundedness of $\| P_h \|_{\L(V)}$. However, one cannot expect 
in general that $\sup_{h \in (0,1]} \|P_h \|_{\L(V)} < \infty$. For a
discussion of the stability of the orthogonal projector $P_h$ in case of the
Galerkin finite element method we refer to \cite{bank2014, carstensen2002,
carstensen2004, crouzeix1987}.

\begin{theorem}
  \label{th6:conv}
  Let Assumption~\ref{as:nonlinPDE} be satisfied. Then for a given
  inhomogeneity $f\in L^2(0,T;H)$ and initial value $u_0\in V$ let $u$ be the
  unique weak solution to the abstract problem \eqref{eq6:AbstractProb}. In
  addition, we assume that there exists $\gamma \in (0,1)$ with 
  \begin{align}
		\label{eq6:Hoeldercond}
    u \in C^{\gamma}( [ 0,T]; V) 
  \end{align}
  as well as 
  \begin{align}
    \label{eq6:L2cond}
    \int_0^T \| \A(t) u(t) \|^2_H \diff{t}  < \infty.
  \end{align}
  Then there exists a constant $C$ only depending on $L$,
  $\mu$, and $T$ such that for every step size $k = \frac{T}{N}$, $N \in \N$,
  and $h \in (0,1)$ we have 
  \begin{align*}
    &\max_{n\in \{0,\dots, N\} } \| U^n_h - u(t_{n})  \|_{L^2(\Omega;H)}
    + \Big( k \sum_{n = 1}^N \| U^n_h - u(t_{n})  \|_{L^2(\Omega;V)}^2
    \Big)^{\frac{1}{2}}
    \\
    &\quad \le C \Big( k^{\frac{1}{2}} \big(\| f \|_{L^2(0,T;H)} + 
    \|\A(\cdot) u (\cdot) \|_{L^2(0,T;H)} \big) + k^{\gamma} \| u
    \|_{C^{\gamma}( [0,T];V)}\\
    &\qquad +\max_{n\in \{0,\dots, N\} } \mathrm{dist}_H(u(t_n), V_h)
    + \big( 1 + \| P_h \|_{\L(V)} \big)
    \Big( k \sum_{n = 1}^N  \mathrm{dist}_V(u(t_n), V_h)^2
    \Big)^{\frac{1}{2}} \Big),
  \end{align*}
  where $(U^n_h)_{n \in \{0,\ldots,N\}} \subset L^2(\Omega;V_h)$ is given by  
  the scheme \eqref{eq6:parabolicScheme}.
\end{theorem}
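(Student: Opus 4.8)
The plan is to follow the blueprint of Section~\ref{sec:randEul}, replacing $f(\tn_n,\cdot)$ by $P_hf(\tn_n)-\A_h(\tn_n)\cdot$ and the residual estimates of Lemmas~\ref{lem:stab} and~\ref{lem:cons} by their Galerkin counterparts. First I would split the error through the $H$-orthogonal projection: for $n\in\{0,\dots,N\}$ write
\[
  U^n_h-u(t_n)=\Theta^n+\big(P_hu(t_n)-u(t_n)\big),\qquad \Theta^n:=U^n_h-P_hu(t_n)\in V_h,
\]
which is legitimate since \eqref{eq6:Hoeldercond} gives $u(t_n)\in V\subset H$. The second summand is a pure best-approximation error: $\|P_hu(t_n)-u(t_n)\|_H=\mathrm{dist}_H(u(t_n),V_h)$ by \eqref{eq6:bestH}, and, writing $Q_h$ for the $V$-orthogonal projection and using $P_hQ_hu(t_n)=Q_hu(t_n)$, one gets $\|P_hu(t_n)-u(t_n)\|_V\le\|P_h(u(t_n)-Q_hu(t_n))\|_V+\|Q_hu(t_n)-u(t_n)\|_V\le(1+\|P_h\|_{\L(V)})\,\mathrm{dist}_V(u(t_n),V_h)$ by \eqref{eq6:bestV}. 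Hence it suffices to bound $\Theta^n$ in $L^2(\Omega;H)$ and in the discrete $L^2(\Omega;V)$-norm; existence and square-integrability of $U^n_h$ (hence of $\Theta^n$) are provided by Lemmas~\ref{lem6:exist} and~\ref{lem:aprioriPDE}.

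Next I would derive the error recursion. Since $\dot u=f-\A(\cdot)u(\cdot)$ in $V^\ast$ a.e., and since $f\in L^2(0,T;H)$ together with \eqref{eq6:L2cond} forces $\dot u\in L^2(0,T;H)$, one may apply $P_h$ to $u(t_n)-u(t_{n-1})=\int_{t_{n-1}}^{t_n}\dot u(s)\diff{s}$ and use that $\A_h(s)u(s)=P_h\A(s)u(s)$ for a.e.\ $s$ (both being determined by \eqref{eq6:defAh}). Subtracting the resulting identity from \eqref{eq6:parabolicScheme} and inserting $\pm k\A_h(\tn_n)P_hu(t_n)$ gives
\[
  \Theta^n-\Theta^{n-1}+k\big(\A_h(\tn_n)U^n_h-\A_h(\tn_n)P_hu(t_n)\big)=\rho^n_f+\rho^n_q+\rho^n_c,
\]
with the randomized $f$-quadrature residual $\rho^n_f=kP_hf(\tn_n)-\int_{t_{n-1}}^{t_n}P_hf(s)\diff{s}$, the randomized $\A u$-quadrature residual $\rho^n_q=\int_{t_{n-1}}^{t_n}\A_h(s)u(s)\diff{s}-k\A_h(\tn_n)u(\tn_n)$, and the consistency residual $\rho^n_c=k\big(\A_h(\tn_n)u(\tn_n)-\A_h(\tn_n)P_hu(t_n)\big)$, where $\A_h(\tn_n)$ is understood to be extended to $V$ through \eqref{eq6:defAh}. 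Since $\rho^n_f$ and $\rho^n_q$ depend only on $\tn_n$, which is independent of $\F_{n-1}$, and since $\E[\Psi(\tn_n)]=\tfrac1k\int_{t_{n-1}}^{t_n}\Psi(s)\diff{s}$ for Bochner-integrable $\Psi$, we get $\E[\rho^n_f+\rho^n_q\mid\F_{n-1}]=0$, together with the stratified-sampling variance bounds $\E\|\rho^n_f\|_H^2\le k\|f\|_{L^2(t_{n-1},t_n;H)}^2$ and $\E\|\rho^n_q\|_H^2\le k\|\A(\cdot)u(\cdot)\|_{L^2(t_{n-1},t_n;H)}^2$ (using $\|P_h\|_{\L(H)}\le1$).

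Then comes the discrete energy argument. Testing the recursion with $\Theta^n$ in the $H$-inner product (all terms lie in $V_h$), the polarization identity treats $(\Theta^n-\Theta^{n-1},\Theta^n)_H$, and Assumption~\ref{as:nonlinPDE}~\ref{item:Amon} with \eqref{eq6:defAh} gives $k(\A_h(\tn_n)U^n_h-\A_h(\tn_n)P_hu(t_n),\Theta^n)_H=k\inner[V^\ast,V]{\A(\tn_n)U^n_h-\A(\tn_n)P_hu(t_n)}{\Theta^n}\ge k\mu\|\Theta^n\|_V^2$. On the right, splitting $\Theta^n=\Theta^{n-1}+(\Theta^n-\Theta^{n-1})$ and taking expectations, the term $(\rho^n_f+\rho^n_q,\Theta^{n-1})_H$ vanishes ($\Theta^{n-1}$ is $\F_{n-1}$-measurable and the conditional expectation is zero), and $(\rho^n_f+\rho^n_q,\Theta^n-\Theta^{n-1})_H$ is absorbed by Young's inequality against the $\|\Theta^n-\Theta^{n-1}\|_H^2$ produced by the polarization identity, leaving $\tfrac12\E\|\rho^n_f+\rho^n_q\|_H^2\le k(\|f\|_{L^2(t_{n-1},t_n;H)}^2+\|\A(\cdot)u(\cdot)\|_{L^2(t_{n-1},t_n;H)}^2)$. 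The delicate term is $\rho^n_c$, which must be paired with $\Theta^n\in V_h$ through the $V^\ast$--$V$ duality: applying \eqref{eq6:defAh} with the $V_h$-test function $\Theta^n$ (so that \emph{no} projection is generated on $\Theta^n$) yields $(\rho^n_c,\Theta^n)_H=k\inner[V^\ast,V]{\A(\tn_n)u(\tn_n)-\A(\tn_n)P_hu(t_n)}{\Theta^n}$, whence Assumption~\ref{as:nonlinPDE}~\ref{item:Alip} and Young's inequality give $(\rho^n_c,\Theta^n)_H\le\tfrac{\mu k}{2}\|\Theta^n\|_V^2+\tfrac{kL^2}{2\mu}\|u(\tn_n)-P_hu(t_n)\|_V^2$; the first part is absorbed into the monotonicity term (still leaving $\tfrac{\mu k}{2}\|\Theta^n\|_V^2$ on the left), and by $\|u(\tn_n)-u(t_n)\|_V\le\|u\|_{C^\gamma([0,T];V)}k^\gamma$ (from \eqref{eq6:Hoeldercond}) and the projection bound of the first paragraph, $\|u(\tn_n)-P_hu(t_n)\|_V^2\le 2\|u\|_{C^\gamma([0,T];V)}^2k^{2\gamma}+2(1+\|P_h\|_{\L(V)})^2\mathrm{dist}_V(u(t_n),V_h)^2$.

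Finally I would sum for $n=1,\dots,m$ with arbitrary $m\le N$, use $\Theta^0=P_hu_0-P_hu_0=0$, and note that no term proportional to $\|\Theta^j\|_H^2$ survives on the right — this is precisely where pure monotonicity rather than a G{\aa}rding inequality is used; in the setting of Remark~\ref{rem:Gaarding} an additional application of the discrete Gronwall lemma (Lemma~\ref{lem:discreteGronwall}) would be needed here. Taking maxima, square roots, and using $k\sum_{n}\|g\|_{L^2(t_{n-1},t_n;H)}^2=k\|g\|_{L^2(0,T;H)}^2$ and $\sum_n k\cdot k^{2\gamma}\le Tk^{2\gamma}$, one obtains for $\max_m\|\Theta^m\|_{L^2(\Omega;H)}$ and $(k\sum_n\|\Theta^n\|_{L^2(\Omega;V)}^2)^{1/2}$ a bound of the asserted form with a constant depending only on $L,\mu,T$; recombining with the projection error via the triangle inequality adds the $\max_n\mathrm{dist}_H(u(t_n),V_h)$ and $(1+\|P_h\|_{\L(V)})(k\sum_n\mathrm{dist}_V(u(t_n),V_h)^2)^{1/2}$ contributions. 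I expect the main obstacle to be exactly the treatment of $\rho^n_c$: one has to keep the $\A_h$-difference paired against a $V_h$-function so that the Lipschitz estimate does not produce a spurious factor $\|P_h\|_{\L(V)}$ in front of the temporal term $k^\gamma\|u\|_{C^\gamma([0,T];V)}$; the remaining steps are a routine adaptation of Lemmas~\ref{lem:stab} and~\ref{lem:cons}.
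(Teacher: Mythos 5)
Your proposal is correct and takes essentially the same route as the paper: split the error via the $H$-orthogonal projection as $U^n_h-u(t_n)=\Theta^n+\Xi^n$, derive a discrete energy identity for $\Theta^n$ via the polarization identity and the monotonicity of $\A$, exploit that the quadrature residuals in $f$ and $\A u$ have vanishing conditional expectation given $\F_{n-1}$ (absorbing the remainder against $\|\Theta^n-\Theta^{n-1}\|_H^2$), and control the remaining consistency term with the Lipschitz property of $\A$ and the H\"older continuity of $u$. The only cosmetic difference is in the bookkeeping: you group the two stochastic quadrature errors into a single martingale-difference residual $\rho^n_f+\rho^n_q$ and lump the temporal and projection errors for $\A$ into one term $\rho^n_c$ with argument $u(\tn_n)-P_hu(t_n)$, whereas the paper writes $(E^n-E^{n-1},\Theta^n)_H=\Gamma_1+\Gamma_2+\Gamma_3+\Gamma_4$ and keeps those pieces separate; the estimates and the point about avoiding a spurious $\|P_h\|_{\L(V)}$ on the $k^\gamma$ term by pairing $\rho^n_c$ against $\Theta^n\in V_h$ through the $V^\ast$--$V$ duality coincide with the paper's handling of $\Gamma_1$ and $\Gamma_2$.
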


\begin{proof}
  Throughout the proof we consider an arbitrary but fixed finite-dimensional 
  subspace $V_h$, $h \in (0,1)$, of $V$. Moreover, we denote the error of the
  scheme \eqref{eq6:parabolicScheme} at the time $t_{n}$ by $E^n$, i.e., $E^n 
  := U^n_h - u(t_{n})$ for each $n \in \{ 0,\dots,N\}$. Note that for every $n 
  \ge 1$ we have $E^n \in L^2(\Omega;V)$ since $U^n_h \in L^2(\Omega;V_h)$ by
	Lemma~\ref{lem6:exist} and Lemma~\ref{lem:aprioriPDE}. In addition, due
	to \eqref{eq6:Hoeldercond} we have $u(t) \in V$ for every $t \in [0,T]$.
  
  In the first step, we split the error into two parts using the orthogonal
  projection $P_h \colon H \to V_h$ by 
  \begin{align*}
    E^n = P_h E^n + (I-P_h) E^n =: \Theta^n + \Xi^n.
  \end{align*}
  Due to the orthogonality of $P_h$ with respect to the inner product in $H$ 
  we have
  \begin{align*}
    \|E^n\|^2_H = \|\Theta^n\|^2_H + \| \Xi^n \|_H^2
  \end{align*}
  for every $n \in \{0, \ldots,N\}$. By taking note of \eqref{eq6:bestH} we 
  obtain
  \begin{align*}
    \| \Xi^n \|_H = \mathrm{dist}_H(u(t_n), V_h)
  \end{align*}
  since $\Xi^n = (I-P_h)E^n = (P_h - I) u(t_n)$.
  In addition, we have
  \begin{align*}
    \Big( k \sum_{n = 1}^N \big\| E^n \big\|_{L^2(\Omega;V)}^2
    \Big)^{\frac{1}{2}}
    \le \Big( k \sum_{n = 1}^N \big\| \Theta^n \big\|_{L^2(\Omega;V)}^2
    \Big)^{\frac{1}{2}}
    + \Big( k \sum_{n = 1}^N \big\| \Xi^n \big\|_V^2 \Big)^{\frac{1}{2}}
  \end{align*}
  since $\Xi^n = (P_h - I) u(t_n)$ is deterministic.
  After adding and subtracting the orthogonal projector $Q_h \colon V \to V_h$
  we further obtain the estimate 
  \begin{align*}
    \big\| \Xi^n \big\|_V
    &= \big\| (P_h - I) u(t_n) \big\|_V \le \big\| P_h ( I - Q_h) u(t_n)
    \big\|_V  + \big\| ( Q_h - I) u(t_n) \big\|_V \\
    &\le \big( 1 +  \| P_h \|_{\L(V)} \big) \| (Q_h - I)
    u(t_n)\| = \big( 1 + \| P_h \|_{\L(V)} \big) \mathrm{dist}_V(u(t_n), V_h),
  \end{align*}
  due to \eqref{eq6:bestV}. This shows that
  \begin{align}
    \label{eq6:estXIL2}
    k \sum_{n = 1}^N \big\| \Xi^n \big\|_V^2 
    \le \big( 1 + \| P_h \|_{\L(V)} \big)^2
    k \sum_{n = 1}^N  \mathrm{dist}_V(u(t_n), V_h)^2.
  \end{align}
  Thus it remains to estimate $\E [ \|\Theta^n\|^2_H ]$ and $k \sum_{n = 1}^N
  \E [ \|\Theta^n\|^2_V ] $. To this end, we apply the polarization identity 
  \begin{align}
    \label{eq6:polar}
    \frac{1}{2} \big( \|\Theta^n \|^2_{H} - \|\Theta^{n-1} \|^2_{H} 
    + \|\Theta^n - \Theta^{n-1} \|^2_{H} \big)
    = \ska[H]{\Theta^n - \Theta^{n-1} }{\Theta^n},
  \end{align}
  which holds for every $n \in \{1,\ldots,N\}$. 
  From the orthogonality of $P_h$ with respect to the inner product in $H$ 
  we further have
  \begin{align*}
    \ska[H]{\Theta^n - \Theta^{n-1} }{\Theta^n}
    =\ska[H]{E^n - E^{n-1} }{\Theta^n}
  \end{align*}
  which motivates us to consider the term $E^n - E^{n-1}$ tested with
  $\Theta^n$ in what follows. 

  To estimate the difference of the errors $E^n - E^{n-1} = U^n_h -  u(t_{n}) -
  U^{n-1}_h +  u(t_{n-1})$ we insert the definition of the scheme
  \eqref{eq6:parabolicScheme} and \eqref{eq6:defPh}. This yields 
  \begin{align*}
    \big( U^n_h -  U^{n-1}_h, \Theta^n \big)_H
    &= k\big(  P_h f(\tn_n) - \A_h(\tn_n) U^n_h, \Theta^n \big)_H\\
    &= k \big( f(\tn_n), \Theta^n \big)_H 
    - k \big\langle \A(\tn_n) U^n_h, \Theta^n \big\rangle_{V^\ast,V}.
  \end{align*}
  Moreover, since the random variable $\Theta^n$ takes values in 
  $V_h \subset V$ we get from the canonical embedding $H \cong H^\ast
  \hookrightarrow V^\ast$ and \eqref{eq6:weaksol} that
  \begin{align*}
    \big( u(t_{n}) - u(t_{n-1}), \Theta^n \big)_H
    &= \big\langle u(t_{n}) - u(t_{n-1}), \Theta^n \big\rangle_{V^\ast,V}\\
    &= \int_{t_{n-1}}^{t_n} \big\langle \dot{u}(s),
    \Theta^n \big\rangle_{V^\ast,V} \diff{s}\\ 
    &= \int_{t_{n-1}}^{t_n} \big\langle 
    f(s) - \A(s) u(s), \Theta^n \big\rangle_{V^\ast,V} \diff{s}.
  \end{align*}
  Therefore, altogether we obtain the following representation
  \begin{align}
    \label{eq6:reptest}
    \begin{split}
      \ska[H]{E^n - E^{n-1} }{\Theta^n}
      & = - k \big\langle \A(\tn_n) U^n_h - \A(\tn_n) u(t_n), \Theta^n
      \rangle_{V^\ast,V}\\
      &\qquad - k \big\langle \A(\tn_n) u(t_n) - \A(\tn_n) u(\tn_n),
      \Theta^n \big\rangle_{V^\ast,V}\\
      &\qquad - \int_{t_{n-1}}^{t_n} \big\langle
      \A(\tn_n) u(\tn_n) - \A(s) u(s), \Theta^n \big\rangle_{V^\ast,V}
      \diff{s}\\
      &\qquad + \int_{t_{n-1}}^{t_{n}} \big( f(\tn_n) - f(s), \Theta^n \big)_H
      \diff{s}\\
      &=: \Gamma_1 + \Gamma_2 +\Gamma_3 +\Gamma_4.
    \end{split}
  \end{align}
  We give estimates for the four terms $\Gamma_i$, $i \in \{1,\ldots,4\}$, in
  \eqref{eq6:reptest} separately. By recalling $\Theta^n = P_h E^n = U_h^n -
  P_h u(t_n)$ 
  the first term is estimated using 
  Assumption~\ref{as:nonlinPDE}~\ref{item:Alip}
	and~\ref{item:Amon} as follows:
  \begin{align}
    \label{eq6:test1}
    \begin{split}
      \Gamma_1 &= - k \big\langle \A(\tn_n) U^n_h - \A(\tn_n) P_h u(t_n),
      \Theta^n 
      \big\rangle_{V^\ast,V}\\
      &\qquad - k \big\langle \A(\tn_n) P_h u(t_n) - \A(\tn_n) u(t_n),
      \Theta^n \big \rangle_{V^\ast,V}\\
      &\le - k \mu \big\| \Theta^n \big\|^2_V + 
      k L \big\| (I - P_h) u(t_n) \big\|_V \big\| \Theta^n \big\|_V\\
      &\le - k \mu \big\| \Theta^n \big\|^2_V + 
      k \frac{L^2}{\mu} \big\| \Xi^n \big\|_V^2 + k \frac{\mu}{4}
      \big\| \Theta^n \big\|^2_V.
    \end{split}
  \end{align}
  Observe that we also applied the weighted Young inequality in the last step.

  We similarly obtain an estimate for the second summand in
  \eqref{eq6:reptest} of the form
  \begin{align*}
    &\Gamma_2 \le k \big\| \A(\tn_n) u(t_n) - \A(\tn_n) u(\tn_n) 
    \big\|_{V^\ast}
    \big\| \Theta^n \big\|_V \\
    &\quad \le k \frac{L^2}{\mu} \| u(t_n) - u(\tn_n) \|_V^2 
    + k \frac{\mu}{4} \big\| \Theta^n \big\|_V^2.
  \end{align*}
  Since $|t_n - \xi_n(\omega)| \le k$ for every $\omega \in \Omega$
  and $u \in C^\gamma( [0,T];V)$ we therefore
  conclude 
  \begin{align}
    \label{eq6:test2}
    \Gamma_2 \le k^{1 + 2 \gamma} \frac{L^2}{\mu} \| u
    \|_{C^\gamma( [0,T];V)}^2 
    + k \frac{\mu}{4} \big\| \Theta^n \big\|_V^2.
  \end{align}

  Concerning the term $\Gamma_3$ in \eqref{eq6:reptest}, let us recall that 
  both $\Theta^n$ and $\xi_n$ are square-integrable
  random variables which are $\F_n$-measurable. Moreover, $\Theta^n$ takes
  values in $V_h \subset V$, while $\omega \mapsto
  \A(\xi_n(\omega))u(\xi_n(\omega))$ takes almost surely
  values in $H$ due to \eqref{eq6:L2cond}. 
  Therefore, after taking expectation we obtain 
  \begin{align}
    \label{eq6:gamma3}
    \begin{split}
      \E\big[ \Gamma_3 \big]
      &= \E \Big[ \int_{t_{n-1}}^{t_{n}} 
      \big\langle \A(\tn_n) u(\tn_n) - \A(s) u(s), \Theta^n
      \big\rangle_{V^\ast,V} \diff{s} \Big]\\
      &= \E \Big[ \int_{t_{n-1}}^{t_{n}} 
      \big( \A(\tn_n) u(\tn_n) - \A(s) u(s), \Theta^n \big)_H
      \diff{s} \Big]\\
      &=\E \Big[ \int_{t_{n-1}}^{t_{n}} 
      \big( \A(\tn_n) u(\tn_n) - \A(s) u(s), \Theta^n - \Theta^{n-1}
      \big)_H \diff{s} \Big]\\
      &\quad + \E \Big[ \int_{t_{n-1}}^{t_{n}} 
      \big ( \A(\tn_n) u(\tn_n) - \A(s) u(s), \Theta^{n-1}
      \big)_H \diff{s} \Big].
    \end{split}
  \end{align}
  Standard arguments then directly yield 
  a bound for the first summand of the form
  \begin{align*}
    &\E \Big[ \int_{t_{n-1}}^{t_{n}} 
    \big(  \A(\tn_n) u(\tn_n) - \A(s) u(s), \Theta^n - \Theta^{n-1}
    \big)_H \diff{s} \Big]  \\
    &\quad \le \Big( \E \Big[ \int_{t_{n-1}}^{t_{n}}
    \| \A(\tn_n) u(\tn_n) - \A(s) u(s) \|^2_H \diff{s} \Big]
    \Big)^{\frac{1}{2}} \big( k \E \big[ \| \Theta^n - \Theta^{n-1} \|^2_H 
    \big]
    \big)^{\frac{1}{2}}\\
    &\quad \le k  \E \Big[ \int_{t_{n-1}}^{t_{n}}
    \| \A(\tn_n) u(\tn_n) - \A(s) u(s) \|^2_H \diff{s} \Big] + 
    \frac{1}{4} \E \big[ \| \Theta^n - \Theta^{n-1} \|^2_H\big]\\
    &\quad \le 4 k \int_{t_{n-1}}^{t_{n}} \| \A(s) u(s) \|^2_H \diff{s}
    + \frac{1}{4} \E \big[ \| \Theta^n - \Theta^{n-1} \|^2_H\big],
  \end{align*}
  where the last step follows from 
  \begin{align*}
    \E\big[ k \| \A(\tn_n) u(\tn_n)\|^2_H \big] = \int_{t_{n-1}}^{t_{n}} \|
    \A(s) u(s) \|^2_H \diff{s}.
  \end{align*}
  To estimate the second summand in \eqref{eq6:gamma3}, we make use of
  the tower property for conditional expectations and the fact that
  $\Theta^{n-1}$ is $\F_{n-1}$-measurable. This yields
  \begin{align*}
    &\E \Big[ \int_{t_{n-1}}^{t_{n}} 
    \big ( \A(\tn_n) u(\tn_n) - \A(s) u(s), \Theta^{n-1}
    \big)_H \diff{s} \Big] \\
    &\quad = 
    \E \Big[ \int_{t_{n-1}}^{t_{n}} \E \big[ 
    \big ( \A(\tn_n) u(\tn_n) - \A(s) u(s), \Theta^{n-1}
    \big)_H \big| \F_{n-1} \big] \diff{s} \Big] \\
    &\quad = \E \Big[ k \E \big[ 
    \big ( \A(\tn_n) u(\tn_n), \Theta^{n-1}
    \big)_H \big| \F_{n-1} \big] 
    - \int_{t_{n-1}}^{t_{n}} \big( \A(s) u(s), \Theta^{n-1}
    \big)_H \diff{s} \Big] = 0,
  \end{align*}
  where the last step follows from
  \begin{align*}
    k \E \big[ \big ( \A(\tn_n) u(\tn_n), \Theta^{n-1} \big)_H \big| \F_{n-1}
    \big] 
    &= \big( k \E \big[ \A(\tn_n) u(\tn_n) \big| \F_{n-1}\big], \Theta^{n-1}
    \big)_H \\
    &= \big( k \E \big[ \A(\tn_n) u(\tn_n) \big], \Theta^{n-1}
    \big)_H\\
    &= \int_{t_{n-1}}^{t_n} \big( \A(s) u(s), \Theta^{n-1} \big)_H \diff{s}
  \end{align*}
  due to the independence of $\xi_n$ from $\F_{n-1}$. Altogether this shows
  \begin{align}
    \label{eq6:test3}
    \E\big[ \Gamma_3 \big] \le 
    4 k \int_{t_{n-1}}^{t_{n}} \| \A(s) u(s) \|^2_H \diff{s}
    + \frac{1}{4} \E \big[ \| \Theta^n - \Theta^{n-1} \|^2_H\big].    
  \end{align}
  
  The same steps with $f(\cdot)$ in place of $\A(\cdot) u(\cdot)$ also 
  yield an estimate for $\Gamma_4$. Therefore,
  \begin{align}
    \label{eq6:test4}
    \E\big[ \Gamma_4 \big] \le 
    4 k \int_{t_{n-1}}^{t_{n}} \| f(s) \|^2_H \diff{s}
    + \frac{1}{4} \E \big[ \| \Theta^n - \Theta^{n-1} \|^2_H\big].    
  \end{align}
  In summary, after taking expectation and inserting \eqref{eq6:test1},
  \eqref{eq6:test2}, \eqref{eq6:test3}, and  \eqref{eq6:test4} into
  \eqref{eq6:polar} we obtain
  \begin{align*}
    &\frac{1}{2} \E \big[ \|\Theta^n \|^2_{H} - \|\Theta^{n-1} \|^2_{H} 
    + \|\Theta^n - \Theta^{n-1} \|^2_{H} \big] \\
    &\quad = \E \big[ \Gamma_1 + \Gamma_2 +\Gamma_3 +\Gamma_4 \big]\\
    &\quad \le - \frac{1}{2} k  \mu \E \big[ \big\| \Theta^n \big\|^2_V \big]
    + k \frac{L^2}{\mu} \big\| \Xi^n \big\|_V^2 
    + k^{1 + 2 \gamma} \frac{L^2}{\mu} \| u
    \|_{C^\gamma( [0,T];V)}^2\\
    &\qquad + 4 k \int_{t_{n-1}}^{t_{n}} \| \A(s) u(s) \|^2_H \diff{s}
    +4 k \int_{t_{n-1}}^{t_{n}} \| f(s) \|^2_H \diff{s}\\
    &\qquad+  \frac{1}{2} \E \big[ \| \Theta^n - \Theta^{n-1} \|^2_H\big].
  \end{align*}
  
  After canceling the last term from both sides of the inequality, we sum over
  $n \in \{1,\ldots, j\}$ for some arbitrary $j \in \{1,\ldots,N\}$. Moreover,
  since $U_h^0 = P_h u_0$ we also have $\Theta^0 = 0$. Hence
  we obtain 
  \begin{align*}
    &\E \big[ \|\Theta^j \|^2_{H} \big] + \frac{1}{2} k \mu \sum_{n = 1}^j
    \E \big[ \|\Theta^n \|^2_{V} \big]\\
    &\quad \le k \frac{L^2}{\mu}\sum_{n = 1}^j \big\| \Xi^j \big\|_V^2
    + k^{2 \gamma} T \frac{L^2}{\mu} \| u \|_{C^\gamma( [0,T];V)}^2
    \\
    & \qquad + 4 k \big( \| \A(\cdot) u(\cdot) \|^2_{L^2(0,T;H)} + 
    \| f \|_{L^2(0,T;H)}^2 \big).  
  \end{align*}
  The proof is completed by taking the maximum over $j \in \{1,\ldots,N\}$ and
  an application of \eqref{eq6:estXIL2}.
\end{proof}

\begin{remark}
	Let us briefly discuss the additional regularity conditions
	\eqref{eq6:Hoeldercond} and \eqref{eq6:L2cond} in Theorem~\ref{th6:conv}.
	First note that since $f \in L^2(0,T;H)$ the condition \eqref{eq6:L2cond} is
	essentially equivalent to 
	\begin{align*}
		\dot{u} \in L^2(0,T;H).
	\end{align*}
	A sufficient condition for \eqref{eq6:Hoeldercond} is then to additionally
	require
	\begin{align*}
		\dot{u} = f - \A(\cdot) u(\cdot) \in L^{q}(0,T;V)
	\end{align*}
	with $q = \frac{1}{1-\gamma}$. In Section~\ref{sec:PDEreg} we will discuss
  more explicit classes of linear and semilinear evolution equations,
  whose solutions enjoy the required regularity. 
\end{remark}

\section{Regularity of non-autonomous evolution equations}
\label{sec:PDEreg}
To prove a rate of convergence in Section~\ref{sec:nonlinPDE}, 
we had to to impose additional assumptions on the regularity of the exact
solution $u$. In the following, we will discuss cases where this particular
regularity can be expected.

We begin by considering a class of linear problems that fulfills the 
regularity conditions imposed in Theorem~\ref{th6:conv}. 
As in Section~\ref{sec:nonlinPDE},
we consider the real, separable 
Hilbert spaces $(V,\ska[V]{\cdot}{\cdot}, \|\cdot\|_V)$ and 
$(H,\ska[H]{\cdot}{\cdot}, \|\cdot\|_H)$ that form the Gelfand triple
\begin{align*}
  V \stackrel{c,d}{\hookrightarrow} H \cong H^* 
  \stackrel{c,d}{\hookrightarrow} V^*.
\end{align*}
Further, we state the following assumption to obtain a suitable evolution 
operator.

\begin{assumption}
	\label{as:linPDE}
  For all $t \in [0,T]$ let $a_0(t;\cdot,\cdot)\colon V\times V \to \R$ be
  a	bilinear form that fulfills the following conditions:
	\begin{enumerate}[label=(\roman*)]
		\item For every $v_1, v_2 \in V$ the mapping $a_0(\cdot; 
		v_1,v_2)\colon[0,T]\to \R$
		is measurable.
		\item \label{item:aLinbounded} 
		There exists $\beta \in (0,\infty)$ such that for all $t \in [0,T]$
		it holds true that
		\begin{align*}
  		|a_0(t;v_1,v_2)| \le \beta \|v_1\|_V \| v_2 \|_V , \quad
  		\text{ for all } v_1, v_2 \in V.
		\end{align*}
		\item \label{item:Apos}
		There exists $\mu \in (0,\infty)$ such that for all $t \in
		[0,T]$ it holds true that
		\begin{align*}
  		a_0(t;v,v) \ge \mu \| v \|^2_V, \quad \text{ for all } v \in V.         
  		\end{align*}
	\end{enumerate}
\end{assumption}

For every $t \in [0,T]$ let $\A_0(t) \colon V\to V^*$ and $A_0(t) \colon
\dom(A_0(t)) \subset  H \to H$ denote the associated operators to the bilinear form 
$a_0(t;\cdot,\cdot)$ from Assumption~\ref{as:linPDE}. More precisely, 
the linear operator $\A_0(t)$ is uniquely determined by
\begin{align*}
  \big\langle \A_0(t) v_1, v_2 \big\rangle_{V^\ast,V} &= a_0(t; v_1, v_2), 
  \quad
  \text{ for all } v_1,v_2 \in V.
\end{align*}
Moreover, we set $\dom(A_0(t)) := \{ v \in V \, : \, \A_0(t) v \in H \}$ and define
$A_0(t)$ as the restriction of $\A_0(t)$ to the domain $\dom(A_0(t))$.
Note that $\dom(A_0(t))$ becomes a Banach space if endowed with the graph norm.
For given $u_0 \in V$ and $f \in L^2(0,T;H)$ we consider
the following linear and non-autonomous problem
\begin{equation}\label{eq7:linPDE}
	\begin{split}
    \begin{cases}
      \dot{u}(t)+\A_0(t)u(t)=f(t),& \quad t\in (0,T],\\
      u(0) = u_0. &      
    \end{cases}
  \end{split}
\end{equation}
For the formulation of the regularity result we first recall that the initial
value problem \eqref{eq7:linPDE} 
is said to have \emph{maximal} $L^p$-\emph{regularity in} $H$ 
for some $p \in [2,\infty)$ if for all $f \in L^p(0,T;H)$ we have a unique
solution $u \in \mathcal{W}(0,T)$ with $\dot{u} \in L^p(0,T;H)$ and
$\A_0(\cdot)u(\cdot) \in L^p(0,T;H)$.

\begin{theorem} 
  \label{th:regularityLin}
  For every $t \in [0,T]$ let $a_0(t;\cdot,\cdot)\colon V\times V \to \R$ 
  fulfill Assumption~\ref{as:linPDE}.  
	Suppose there exists $M \geq 0$ such that 
	$\|A_0(0)A_0^{-1}(t)\|_{\L(H)}\leq M$ holds true
	for all $t\in [0,T]$, that the embedding 
	$\dom(A_0(0))\hookrightarrow H$ is compact, and that
	$\dom(A_0(0)^{\frac12})=V$ (Kato's square root property).  
	Fix $u_0\in V$ and assume that the Cauchy problem 
	\eqref{eq7:linPDE} has maximal $L^p$-regularity in $H$ for some $p \in
  (2,\infty)$. 
	Then $u\in C^{\gamma}([0,T];V)$ for $\gamma <\frac12-\frac1p$.
\end{theorem}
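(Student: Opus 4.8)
The plan is to bootstrap the weak solution $u\in\mathcal{W}(0,T)$ into a maximal-regularity class for the autonomous reference operator $A_0(0)$ and then to extract the H\"older continuity by interpolation in time. First I would invoke the assumed maximal $L^p$-regularity of \eqref{eq7:linPDE} in $H$ to obtain $\dot u\in L^p(0,T;H)$ and $\A_0(\cdot)u(\cdot)\in L^p(0,T;H)$. Using that $\|A_0(0)A_0^{-1}(t)\|_{\L(H)}\le M$ uniformly in $t$ and that $\A_0(t)u(t)\in H$ for almost every $t$, one gets
\begin{align*}
  \|A_0(0)u(t)\|_H=\|A_0(0)A_0^{-1}(t)\,\A_0(t)u(t)\|_H\le M\,\|\A_0(t)u(t)\|_H
  \quad\text{for a.e. }t,
\end{align*}
hence $A_0(0)u\in L^p(0,T;H)$, i.e. $u\in L^p(0,T;\dom(A_0(0)))$. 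Therefore $u\in W^{1,p}(0,T;H)\cap L^p(0,T;\dom(A_0(0)))$.

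The heart of the argument is then the mixed derivative theorem: a function in $W^{1,p}(0,T;X_0)\cap L^p(0,T;X_1)$ with $X_1\hookrightarrow X_0$ belongs to $W^{\theta,p}(0,T;[X_0,X_1]_{1-\theta})$ for every $\theta\in[0,1]$. Taking $X_0=H$, $X_1=\dom(A_0(0))$ and $\theta=\tfrac12$ gives $u\in W^{1/2,p}(0,T;[H,\dom(A_0(0))]_{1/2})$. Here Kato's square root property enters, identifying $[H,\dom(A_0(0))]_{1/2}=\dom(A_0(0)^{1/2})=V$ with equivalent norms, so that $u\in W^{1/2,p}(0,T;V)$. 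Since $p>2$, so that $\tfrac12-\tfrac1p>0$, the fractional Sobolev embedding in the time variable $W^{1/2,p}(0,T;V)\hookrightarrow C^{1/2-1/p}([0,T];V)$ yields $u\in C^{\gamma}([0,T];V)$ for every $\gamma<\tfrac12-\tfrac1p$.

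The point that requires care --- and what I expect to be the main obstacle --- is the treatment of the initial time. A function in $W^{1,p}(0,T;H)\cap L^p(0,T;\dom(A_0(0)))$ has trace $u(0)$ in the real interpolation space $(H,\dom(A_0(0)))_{1-1/p,p}$, which for $p>2$ is strictly smaller than $V=(H,\dom(A_0(0)))_{1/2,2}$; so the maximal-regularity bound of the first paragraph cannot be valid up to $t=0$ under the sole hypothesis $u_0\in V$. The remedy is to work in a \emph{time-weighted} setting: for the weight $t^{1-\mu}$ with $\mu=\tfrac12+\tfrac1p\in(\tfrac1p,1)$ the associated trace space is $(H,\dom(A_0(0)))_{1/2,p}\supseteq V$, so $u_0\in V$ is admissible, and the weighted maximal $L^p$-regularity (equivalent to the unweighted one), a weighted mixed derivative theorem, and a weighted Sobolev embedding apply; meanwhile, on every subinterval $[t_0,T]$ with $t_0>0$ one uses the parabolic smoothing of the non-autonomous evolution system --- this is where the compact embedding $\dom(A_0(0))\hookrightarrow H$ and the uniform bound $M$ come in --- to place $u(t_0)$ in the smaller trace space and run the unweighted argument. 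Combining the information near $t=0$ with that away from $t=0$, while keeping strict track of the interpolation indices, is the technical core and is what finally produces the exponent $\gamma<\tfrac12-\tfrac1p$ on all of $[0,T]$.
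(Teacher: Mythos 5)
Your first two paragraphs reproduce the paper's argument almost verbatim: maximal $L^p$-regularity yields $u\in W^{1,p}(0,T;H)$ and $\A_0(\cdot)u(\cdot)\in L^p(0,T;H)$, and the uniform comparability $\|A_0(0)A_0^{-1}(t)\|_{\L(H)}\le M$ promotes this to $u\in L^p(0,T;\dom(A_0(0)))$. Where you diverge is in how you pass from this intersection space to $C^{\gamma}([0,T];V)$: you go via a mixed-derivative theorem, complex interpolation $[H,\dom(A_0(0))]_{1/2}$, and a fractional Sobolev embedding, whereas the paper inserts the real-interpolation chain $(H,\dom(A_0(0)))_{\frac12+\epsilon,p}\hookrightarrow(H,\dom(A_0(0)))_{\frac12,1}\hookrightarrow\dom(A_0(0)^{\frac12})=V$ and then applies Amann's compact embedding theorem \cite[Thm 5.2]{amann2000} in one step; this is where the assumed compactness of $\dom(A_0(0))\hookrightarrow H$ is actually used. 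Your route is legitimate, but you should be explicit that the identity $[H,\dom(A_0(0))]_{1/2}=\dom(A_0(0)^{1/2})$ is not a formal consequence of the Kato square root property alone; it relies on the $m$-accretivity of $A_0(0)$ on the Hilbert space $H$ (for general sectorial operators it may fail). The paper's use of real interpolation with second index $1$ together with the standard embedding $(H,\dom(A))_{\theta,1}\hookrightarrow\dom(A^{\theta})$ sidesteps this subtlety.

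The last paragraph, however, chases a problem that the theorem has already excluded by hypothesis. The statement \emph{assumes} that \eqref{eq7:linPDE} has maximal $L^p$-regularity, which by the paper's definition means precisely that $\dot u\in L^p(0,T;H)$ and $\A_0(\cdot)u(\cdot)\in L^p(0,T;H)$ hold globally on $[0,T]$. There is therefore nothing to repair near $t=0$: whether $u_0\in V$ is compatible with this hypothesis is a question about when the hypothesis can be verified (e.g.\ via \cite{haak2015}, where $u_0\in(H,\dom(A_0(0)))_{1-1/p,p}$ is required, as in Example~\ref{example1}), not a gap in the proof. The time-weighted machinery and the bootstrapping on $[t_0,T]$ are unnecessary; if you delete the final paragraph and instead justify the complex-interpolation identity (or switch to the real-interpolation chain as in the paper), the argument is complete.
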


\begin{proof}
	Since \eqref{eq7:linPDE} has maximal $L^p$-regularity it follows that
	$u\in W^{1,p}(0,T;H)$ and that
	$\int_0^T\|A_0(t)u(t)\|_H^p\, \diff{t} <\infty$. As 
  $\|A_0(0) A^{-1}_0(t)\|_{\mathcal{L}(H)} \leq M$
	for all $t\in [0,T]$, we have that $u\in 
	L^p(0,T;\dom(A_0(0)))$.  Let $0<\epsilon<\frac12-\frac1p$ and consider 
	the continuous embeddings of real interpolation spaces (for the precise 
	definition of an interpolation space see, for example 
	\cite[Sec.~1.3.2.]{triebe1978})
	\begin{align*}
		(H,\dom(A_0(0)))_{\frac12+\epsilon,p}
		\hookrightarrow (H,\dom(A_0(0)))_{\frac12,1}
		\hookrightarrow \dom(A_0(0)^{\frac12})=V,
	\end{align*}
	where a proof for the first embedding can be found in, for example, 
	\cite[Sec.~1.3.3.]{triebe1978} and in \cite[Sec.~1.15.2.]{triebe1978} 
	for the second.
	Then, it follows from \cite[Thm~5.2]{amann2000} that for all
	$0\leq \gamma <\frac12-\epsilon-\frac1p$ we have $u\in C^{\gamma}([0,T];V)$ 
	and the proof is complete.
\end{proof}

A sufficient condition for maximal $L^p$-regularity for 
\eqref{eq7:linPDE} is found in \cite{haak2015}: If
$a_0(t;\cdot,\cdot)\colon V\times V\mapsto \R$ fulfills 
Assumption~\ref{as:linPDE} and 
\begin{align*}
	|a_0(s;u,v)-a_0(t,u,v)|\leq \omega(|s-t|) \|u\|_V\|v\|_V, \quad \text{ for
  all } u, v \in V \text{ and } s,t \in [0,T]
\end{align*}
where $\omega \colon [0,T]\to [0,\infty)$ is a non-decreasing function such 
that 
\begin{align}
  \label{eq7:omegacond}
	\int_0^T\left(\frac{\omega (t)}{t}\right)^p\, \diff{t} <\infty  
\end{align}
for some $p \in (2,\infty)$ and $u_0\in (H,\dom(A_0(0)))_{1-\frac1p, p}$, then
the initial value problem \eqref{eq7:linPDE} has maximal
$L^p$-regularity in $H$.

\begin{example}
  \label{example1}
  Let $\D \subset \mathbb{R}^d$ be a bounded domain with either a smooth 
  boundary or a polygonal boundary if $\D$ is also convex. 
  We set $H=L^2(\D)$ and $V=H^1_0(\D)$.  Then we consider
  the bilinear form $a_0(t;\cdot,\cdot)\colon V\times V\mapsto \R$ given by
  \begin{align*}
    a_0(t;u,v)=\int_{\D} \alpha(t,x) \nabla u(x)\cdot\nabla v(x) \diff{x}.
  \end{align*}
  The coefficient function $\alpha \colon [0,T] \times \D \to \R$ 
  is assumed to satisfy uniform bounds
\begin{align*}
	0<\alpha_1\le \alpha(t;x) \le \alpha_2 ,  
  \quad \text{ for all } t,x\in[0,T]\times \D. 
\end{align*}
In addition, we assume that $\alpha(t, \cdot)$ is sufficiently
smooth with respect to $x$ for every $t \in [0,T]$. Further, 
we suppose that
\begin{align*}
	\| \alpha(t,\cdot)-\alpha(s,\cdot)\|_{L^{\infty}(\D)}\le
	\omega(|t-s|) 
\end{align*}
with a mapping $\omega \colon [0,T] \to [0,\infty)$ as in 
\eqref{eq7:omegacond}.  Then
$a_0(t;\cdot,\cdot)$ satisfies Assumption~\ref{as:linPDE}.
Moreover, the domain $\dom(A_0(t))=H^2(\D)\cap H^1_0(\D)$ is constant in
$t \in [0,T]$ and we have maximal $L^p$-regularity for any $p \in (2,\infty)$ 
with \eqref{eq7:omegacond}, see \cite{haak2015}. 
Theorem~\ref{th:regularityLin} then yields
that $u \in C^{\gamma}([0,T];V)$ for $\gamma <\frac12-\frac1p$ and all
regularity conditions of Theorem~\ref{th6:conv} are satisfied.
A sufficient condition for the initial value is, for example, to choose $u_0 
\in H^2(\D)\cap H^1_0(\D)$.
\end{example}

\begin{remark}
	For further regularity results of linear problems see 
  \cite[Chap.~II.1]{amann1995} or \cite[Chap.~6]{lunardi1995}. There,
  non-autonomous linear evolution equations are considered and the existence of
  a H\"older continuous solution is proved.
\end{remark}

The following assumption states sufficient conditions on a nonlinear
perturbation of $\A_0$ such that the regularity results can be extended from
the linear case to a class of semilinear problems. 

\begin{assumption} \label{as:perturbation}
  Let $\B \colon [0,T] \times H  \to H $ fulfill the following conditions:
	\begin{itemize}
		\item[(i)] For every $v_1, v_2 \in H$ the mapping $\ska[H]{ \B(\cdot) v_1 
		}{v_2} \colon [0,T] \to \R$ is measurable.
		\item[(ii)] There exists a constant $M \geq 0$ such that 
		$\| \B(t) 0 \|_{H}	\leq M$ for every $t\in [0,T]$.
		\item[(iii)] For every $t \in [0,T]$ and $v_1,v_2 \in H$ it holds true 
		that 
			\begin{align*}
				\ska[H]{ \B( t ) v_1 - \B(t ) v_2}{v_1- v_2} \geq 0.
			\end{align*}
		\item[(iv)] There exists $L >0$ such that 
			\begin{align*}
				\|\B(t) v_1 - \B( t ) v_2 \|_{H} \leq L \|v_1 - v_2 \|_H
			\end{align*}
			holds for every $t\in [0,T]$ and $v_1,v_2 \in H$.
	\end{itemize}
\end{assumption}

\begin{remark}
  Applying Remark~\ref{rem:Gaarding}, we can weaken 
  Assumption~\ref{as:perturbation}~$(iii)$ to 
  \begin{itemize}
    \item[$(iii')$] There exists $\kappa \in [0,\infty)$ such that for every $t \in 
    [0,T]$ and $v_1,v_2 \in H$ it holds true that 
    \begin{align*}
      \ska[H]{ \B( t ) v_1 - \B(t ) v_2}{v_1- v_2} \geq -\kappa \|v_1 - v_2\|_H^2.
    \end{align*}
  \end{itemize}
  at the cost that the constants in the error estimate grow exponentially 
  with growing $T$.
\end{remark}

\begin{example}
  \label{example2}
  In the situation of Example~\ref{example1} let
  $b \colon [0,T] \times \D \times \R \to \R$ be a mapping satisfying
  the following conditions:
  \begin{itemize}
    \item[(i)] For every $z \in \R$ the mapping $b(\cdot,\cdot,z) \colon [0,T]
      \times \D \to \R$ is measurable.
    \item[(ii)] There exists a constant $m \in [0,\infty)$ such that
      $|b(t,x,0)| \le m$ for every $t \in [0,T]$, $x \in \D$.
    \item[(iii)] For every $t \in [0,T]$ and $x \in \D$ the mapping
      $b(t,x,\cdot) \colon \R \to \R$ is non-decreasing and globally Lipschitz
      continuous. 
  \end{itemize}
  Then the Nemytskii operator $\B \colon [0,T] \times L^2(\D) \to L^2(\D)$
  defined by $(t,v) \mapsto b(t,\cdot,v(\cdot))$ satisfies
  Assumption~\ref{as:perturbation}. 
\end{example}

Assumptions~\ref{as:linPDE} and~\ref{as:perturbation} in mind, we now consider 
the nonlinear problem
\begin{equation}\label{eq7:linPDEperturbated}
	\begin{split}
    \begin{cases}
      \dot{u}(t)+\A_0(t)u(t) + \B(t) u(t) = f(t),& \quad t\in (0,T],\\
		  u(0)=u_0.&
    \end{cases}
	\end{split}
\end{equation}
A simple insertion of Assumptions~\ref{as:linPDE} and~\ref{as:perturbation} 
proves that the sum of the operators $\A = \A_0 + \B$ fulfills 
Assumption~\ref{as:nonlinPDE}. Further, we obtain the same regularity result 
for the perturbed problem \eqref{eq7:linPDEperturbated} as for the linear 
problem \eqref{eq7:linPDE}.

\begin{theorem} \label{th:regularityLinPerturbed}
	Let the assumptions of Theorem~\ref{th:regularityLin} be satisfied for some
  $p \in (2,\infty)$ and let 
	$\B$ fulfill Assumption~\ref{as:perturbation}. Then the solution $u$ to 
	\eqref{eq7:linPDEperturbated} belongs to $C^{\gamma}([0,T];V)$ for
  $\gamma <\frac12-\frac1p$.
\end{theorem}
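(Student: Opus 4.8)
The plan is to treat \eqref{eq7:linPDEperturbated} as the linear problem \eqref{eq7:linPDE} with a perturbed but sufficiently regular right-hand side, and then to quote Theorem~\ref{th:regularityLin}. As already observed, $\A := \A_0 + \B$ satisfies Assumption~\ref{as:nonlinPDE}, so Proposition~\ref{prop:exPDE} provides a unique weak solution $u \in \mathcal{W}(0,T)$ of \eqref{eq7:linPDEperturbated}; in particular $u \in C([0,T];H) \subset L^\infty(0,T;H)$ via the standard embedding $\mathcal{W}(0,T) \hookrightarrow C([0,T];H)$.

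First I would verify that $\tilde{f} := f - \B(\cdot)u(\cdot)$ lies in $L^p(0,T;H)$. By Assumption~\ref{as:perturbation}~(i) and~(iv) the map $\B$ is of Carath\'eodory type, so $t \mapsto \B(t)u(t)$ is strongly measurable, and Assumption~\ref{as:perturbation}~(ii) and~(iv) give the pointwise bound $\|\B(t)u(t)\|_H \le M + L\|u(t)\|_H$. Hence $\B(\cdot)u(\cdot) \in L^\infty(0,T;H) \subseteq L^p(0,T;H)$, and since $f \in L^p(0,T;H)$ is among the hypotheses inherited from Theorem~\ref{th:regularityLin}, also $\tilde{f} \in L^p(0,T;H)$.

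Next I would note that $u$ is exactly the weak solution of the linear non-autonomous problem $\dot{u} + \A_0(t)u = \tilde{f}$ with the same initial value $u_0 \in V$: this is immediate from the weak formulation \eqref{eq6:weaksol}, because $\B(t)u(t) \in H \hookrightarrow V^\ast$ may simply be moved to the right-hand side. All remaining hypotheses of Theorem~\ref{th:regularityLin} --- maximal $L^p$-regularity in $H$, Kato's square root property $\dom(A_0(0)^{\frac{1}{2}}) = V$, compactness of $\dom(A_0(0)) \hookrightarrow H$, and $\|A_0(0)A_0^{-1}(t)\|_{\L(H)} \le M$ --- depend only on the family $\A_0(\cdot)$ and not on the chosen inhomogeneity, so they carry over verbatim. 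Applying Theorem~\ref{th:regularityLin} to \eqref{eq7:linPDE} with $f$ replaced by $\tilde{f}$ then yields $u \in C^\gamma([0,T];V)$ for all $\gamma < \tfrac12 - \tfrac1p$, which is the assertion.

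The main obstacle is the second step: one must make sense of $\B(\cdot)u(\cdot)$ as an element of $L^p(0,T;H)$ before any higher regularity of $u$ is available. The key point is that the crude a priori bound $u \in C([0,T];H)$ from the energy estimate already suffices, because $\B$ maps $H$ globally Lipschitz into $H$ with a uniform bound on $\B(t)0$; no smallness assumption or local-in-time continuation is needed, and the bootstrap closes after a single step. A minor technical check is that the $W^{1,p}(0,T;H)$-solution delivered by maximal regularity indeed lies in $\mathcal{W}(0,T)$ --- which follows from $\dot{u} \in L^p(0,T;H) \hookrightarrow L^2(0,T;V^\ast)$ and from $u \in L^p(0,T;\dom(A_0(0))) \hookrightarrow L^2(0,T;V)$, using Kato's property together with $p \ge 2$ and $T < \infty$ --- so that, by uniqueness of weak solutions, it coincides with the $u$ constructed above.
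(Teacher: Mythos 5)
Your proposal is correct and follows essentially the same bootstrap argument as the paper: take the unique weak solution $u \in C([0,T];H)$ of \eqref{eq7:linPDEperturbated}, move $\B(\cdot)u(\cdot)$ to the right-hand side (using the global Lipschitz bound and the uniform bound on $\B(t)0$ to place it in $L^p(0,T;H)$), and invoke Theorem~\ref{th:regularityLin} together with uniqueness. The only differences are cosmetic: you note $\B(\cdot)u(\cdot)\in L^\infty(0,T;H)$ rather than estimating its $L^p$-norm directly, and you add a brief check that the maximal-regularity solution lies in $\mathcal W(0,T)$, a point the paper leaves implicit.
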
 

\begin{proof}
	The proof for the regularity follows a similar idea as presented in 
	\cite[Thm~2.9]{meidnerVexler2017}.
	To this end let $u \in C([0,T];H)$ be the unique solution of 
	\eqref{eq7:linPDEperturbated}, see Proposition \ref{prop:exPDE}. We consider 
	the function $g = f - \B u$ which fulfills 
	\begin{align*}
		\|g \|_{L^p(0,T;H)} 
		&\leq \|f \|_{L^p(0,T;H)} + \| \B(\cdot) u(\cdot) - \B(\cdot) 0 
		\|_{L^p(0,T;H)} 
		+ \|\B(\cdot) 0\|_{L^p(0,T;H)} \\
		&\leq \|f \|_{L^p(0,T;H)} + L \| u \|_{L^p(0,T;H)} + T^{\frac{1}{p}} M.
	\end{align*}
	Thus $g \in L^p(0,T;H)$ and the solution $v$ of the linear problem 
	\begin{equation}\label{eq7:perturbatedHom}
		\begin{split}
			\begin{cases}
				\dot{v}(t)+\A_0(t)v(t)  = g(t),& \quad t\in (0,T],\\
				v(0)= u_0.
			\end{cases}
		\end{split}
	\end{equation}
	is an element of the space $C^{\gamma}([0,T];V)$ due to Theorem 
	\ref{th:regularityLin}.
	
	This now enables us to apply a bootstrap argument for the regularity of the 
	solution $u$ of \eqref{eq7:linPDEperturbated}. 
	Both \eqref{eq7:linPDEperturbated} and \eqref{eq7:perturbatedHom} are 
	uniquely solvable and an insertion of $u$ in \eqref{eq7:perturbatedHom} 
	shows that $u$ also solves the linear initial value problem. 
	Therefore, $u= v$ holds and we obtain that $ u \in C^{\gamma}([0,T];V)$.
\end{proof}

\begin{remark}
  The verification of the regularity conditions in Theorem~\ref{th6:conv}
  for general nonlinear PDEs can be quite challenging. However, besides the
  linear and semilinear problems discussed in this section, there are further 
  classes of nonlinear problems that yield H\"older continuous solutions.
  For more general regularity results of semilinear problems we
  refer the reader to \cite[Chap.~7]{lunardi1995}. 
	In \cite{ArendtDuelli2006} some quasi-linear problems are considered. 
	They prove maximal $p$-regularity for these problems, which could 
	potentially be extended to fit our setting as well. 
	A further class of nonlinear problems is considered in 
	\cite{ostermann2002}, where regularity results from 
	\cite{lunardi1995} are used. 
	Here, a rather strong temporal regularity condition is imposed
  on the coefficients which would also lead to higher order convergence results
  of the classical backward Euler method. But, 
  as it can be seen from our numerical examples in Section~\ref{sec:numexpODE}
  and Section~\ref{sec:PDEnum}, the randomized schemes
  \eqref{eq4:RandBackEuler} and \eqref{eq6:parabolicScheme} might still offer
  more reliable results in comparison to their deterministic counterparts if, 
  for instance, the coefficients are smooth but highly oscillating.  
\end{remark}

\section{Numerical experiment with a non-autonomous PDE}
\label{sec:PDEnum}

In this section we finally illustrate the usability of the randomized backward 
Euler method \eqref{eq6:parabolicScheme} for the numerical solution of 
evolution equations. To this end, we follow a similar approach as for ODEs 
presented in Section~\ref{sec:numexpODE}. Here, we consider a nonlinear 
PDE of the form 
\begin{align}\label{eq:7PDEnum}
  \begin{cases}
    u_{t}(t,x) - u_{xx}(t,x) + b(u(t,x)) = f(t,x), &
    \quad (t,x) \in (0,1)^2,\\
    u(t,0) = u(t,1) = 0,& \quad t \in (0,1),\\
    u(0,x) = u_0(x),& \quad x\in (0,1),
  \end{cases}
\end{align}
where we choose the function $b$ given by
\begin{align} \label{eq8:defB}
  b \colon \R \to \R, \quad b(x) = 
  \begin{cases}
    |x|^{\tilde{p}-2}x, \quad &\text{for } |x|\leq R,\\
    R^{\tilde{p}-2}x, &\text{for } |x| > R,
  \end{cases}
\end{align}
for a fixed $R \in (0,\infty)$ and $\tilde{p} \in [2,\infty)$ as well as suitable 
functions $f$ and $u_0$ which are specified further below.
Using $H = L^2(0,1)$ and $V = H_0^1(0,1)$, \eqref{eq:7PDEnum} fits 
into the setting of Section~\ref{sec:PDEreg}, where
\begin{align*}
  a_0(t; v_1, v_2)
  = \inner[V\times V^*]{\A_0 v_1}{v_2} 
  = \int_{\Omega} v_1'(x) v_2'(x) \diff{x}, \quad\text{for all } v_1,v_2 \in V,
\end{align*}
fulfills Assumption~\ref{as:linPDE} and
\begin{align*}
  \ska[H]{\B v_1}{v_2} = \int_{\Omega} b(v_1(x)) v_2(x) \diff{x}, 
  \quad \text{for all } v_1,v_2 \in H,
\end{align*}
fulfills Assumption~\ref{as:perturbation}. For a function $u_0 \in V$ and $f 
\in L^p(0,1;H)$ with $p \in [2,\infty)$ \eqref{eq:7PDEnum} has maximal 
$L^p$-regularity. Furthermore, the assumptions of 
Theorem~\ref{th:regularityLinPerturbed} are fulfilled such that the solution 
$u$ is an element of $C^{\gamma}([0,T];V)$ for $\gamma < \frac{1}{2} - 
\frac{1}{p}$.

In our numerical example we consider a highly oscillating function $w$.
For $P = 2^{-K}$, $K\in \N$, $w$ is the continuous, piecewise linear 
function determined by
\begin{align*}
  w( i P) = 
  \begin{cases}
    iP^2, \quad &\text{ for } i \in \{ 0,\dots, 2^K \} \text{ odd},\\
    0, \quad &\text{ for } i \in \{ 0,\dots, 2^K \} \text{ even},
  \end{cases}
\end{align*}
and the affine linear interpolation of these values for all other $t \in
(0,1)$. The function $w$ is then weakly differentiable with derivative 
$\dot{w}$ given by
\begin{align*}
  \dot{w}(t) = 
    \begin{cases}
      iP, \quad &\text{ for } t \in [(i-1)P,i P ), \ i\in \{ 1,\dots, 2^K \} 
      \text{ odd,}\\
      -(i-1)P, \quad &\text{ for } t \in [(i-1)P,iP ),  \ i\in \{ 1,\dots, 2^K \} 
      \text{ even}.
  \end{cases}
\end{align*}
For the functions
\begin{align*}
  f(t,x) &= (x^2 - x^3)\dot{w}(t) - (2 - 6x)w(t) + \sin(\pi x) + b((x^2 - x^3)w(t) 
  + \pi^{-2} \sin(\pi x)),\\
  u_0(x) &= \pi^{-2} \sin(\pi x)
\end{align*}
the solution is given by
\begin{align*}
  u(t,x) = (x^2 - x^3)w(t) + \pi^{-2} \sin(\pi x)
\end{align*}
as can be seen by a simple insertion.

Then we see that $f \in L^{\infty}(0,1;H)$, $u_0 \in V$, $u \in 
C^{\gamma}([0,1];V)$ for every $\gamma \in (0,1)$ and \eqref{eq6:L2cond} 
holds true, thus the assumptions of Theorem~\ref{th6:conv} are fulfilled.

\begin{figure}[ht] 
  \centering
  \includegraphics[width=0.7\textwidth]{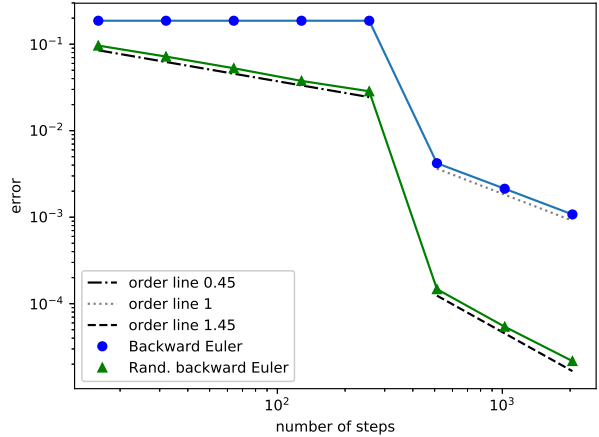}
  \caption{$L^2(\Omega;L^2(0,1))$-errors of the classical backward Euler 
  method
    and scheme \eqref{eq6:parabolicScheme} for equation 
    \eqref{eq:7PDEnum}.}
  \label{fig2}
\end{figure}

The numerical behavior of this problem is very similar to the ODE example 
in Section~\ref{sec:numexpODE}. The right-hand side $f$ is highly 
oscillating. Thus, the classical backward Euler method needs a step size 
smaller than $P$ in order to give an accurate numerical approximation. The 
randomized scheme \eqref{eq6:parabolicScheme}, on the other hand, yields 
much better approximations of the solution for larger values of the step 
size.

In our numerical test displayed in Figure~\ref{fig2}, we considered $R = 10$ 
and $\tilde{p} = 4$ in \eqref{eq8:defB}, $P = 2^{-9}$ and step sizes $k = 
2^n$ with $n \in \{ 4, \dots, 11 \}$. To approximate the 
$L^2(\Omega;L^2(0,1))$-norm of the error we used $200$ Monte Carlo 
iterations. Since we are only interested in demonstrating the temporal 
convergence, we use a fixed finite element space with $500$ degrees of 
freedom based on a uniform mesh in order to keep the spatial error on a 
negligible level for all considered temporal step sizes. For the 
implementation we used the finite element software package FEniCS 
\cite{fenics2012}.

The results are well comparable to the results for the ODE example 
in Section~\ref{sec:numexpODE}. When the step size is larger than the 
value $P$, we can recognize a convergence rate of $0.45$ for the 
randomized scheme. 
On the other hand, the error of the classical backward Euler method does 
not decrease for these step sizes. The errors of both schemes improve 
significantly when the step size is sufficiently small to resolve the 
oscillations. After that we see the classical rate of $1$ for the deterministic 
scheme and a rate of $1.45$ in our randomized scheme.

\section*{Acknowledgment}

The authors like to thank Wolf-J\"urgen Beyn for very helpful comments on
non-autonomous evolution equations and Rico Weiske for good advise 
on programming. Also we like to thank two anonymous referees for 
their valuable suggestions.

This research was partially carried out in the framework of \textsc{Matheon}
supported by Einstein Foundation Berlin.  ME would like to thank the Berlin
Mathematical School for the financial support. RK also gratefully acknowledges
financial support by the German Research Foundation (DFG) through the research 
unit FOR 2402 -- Rough paths, stochastic partial differential equations and
related topics -- at TU Berlin. 

\def\cprime{$'$} \def\polhk#1{\setbox0=\hbox{#1}{\ooalign{\hidewidth
			\lower1.5ex\hbox{`}\hidewidth\crcr\unhbox0}}}

\end{document}